\numberwithin{equation}{section}
\crefname{ALG@line}{line}{lines}
\Crefname{ALG@line}{Line}{Lines}
    \DeclareRobustCommand{\theHALG@line}{line.\thealgorithm.\arabic{ALG@line}}
\theoremstyle{definition}
\newtheorem{defn}{\protect\definitionname}[]
\theoremstyle{plain}
\newtheorem{thm}{\protect\theoremname}[section]
\theoremstyle{plain}
\newtheorem{proposition}{\protect\propositionname}[section]
\theoremstyle{plain}
\theoremstyle{plain}
\newtheorem{lem}{\protect\lemmaname}[section]
\theoremstyle{remark}
\theoremstyle{definition}
\providecommand{\examplename}{Example}
\theoremstyle{plain}
\newcommand{\qbf}{\mathbf{q}}
\providecommand{\assumptionname}{Assumption}
\providecommand{\definitionname}{Definition}
\providecommand{\lemmaname}{Lemma}
\providecommand{\propositionname}{Proposition}
\providecommand{\remarkname}{Remark}
\providecommand{\theoremname}{Theorem}
\providecommand{\corollaryname}{Corollary}
\crefname{section}{\textbf{section}}{\textbf{sections}}
\Crefname{section}{\textbf{Section}}{\textbf{Sections}}
\crefname{thm}{\textbf{Theorem}}{\textbf{theorems}}
\Crefname{thm}{\textbf{Theorem}}{\textbf{Theorems}}
\crefname{lem}{\textbf{Lemma}}{\textbf{lemmas}}
\Crefname{lem}{\textbf{Lemma}}{\textbf{Lemmas}}
\crefname{prop}{\textbf{proposition}}{\textbf{propositions}}
\Crefname{prop}{\textbf{Proposition}}{\textbf{Propositions}}
\crefname{algorithm}{\textbf{algorithm}}{\textbf{algorithms}}
\Crefname{algorithm}{\textbf{Algorithm}}{\textbf{Algorithms}}
\crefname{coro}{\textbf{Corollary}}{\textbf{corollaries}}
\Crefname{coro}{\textbf{Corollary}}{\textbf{corollaries}}
\crefname{defn}{\textbf{Definition}}{\textbf{definitions}}
\Crefname{defn}{\textbf{Definition}}{\textbf{definitions}}
\crefname{table}{\textbf{Table}}{\textbf{tables}}
\Crefname{table}{\textbf{Table}}{\textbf{tables}}
\crefname{figure}{\textbf{Figure}}{\textbf{figures}}
\Crefname{figure}{\textbf{Figure}}{\textbf{figures}}
\crefname{exple}{\textbf{Example}}{\textbf{examples}}
\Crefname{exple}{\textbf{Example}}{\textbf{examples}}
\Crefname{assumption}{\textbf{Assumption}}{\textbf{Assumptions}}
\crefname{assumption}{\textbf{Assumption}}{\textbf{Assumptions}}
\Crefname{rem}{\textbf{Remark}}{\textbf{Remarks}}
\crefname{rem}{\textbf{Remark}}{\textbf{Remarks}}
\newif\ifcomments
  \newcommand{\comm}[2][]{\textcolor{red}{\textbf{[#1:} #2\textbf{]}}}
  \newcommand{\qd}[1]{\comm[QD]{#1}}
  \newcommand{\zl}[1]{\comm[ZL]{#1}}
  \newcommand{\comm}[2][]{}
  \newcommand{\qd}[1]{}
  \newcommand{\zl}[1]{}
  \newcommand{\xx}[1]{}
\providecommand{\corollaryname}{Corollary}
\newcommand{\isarxiv}{}
\begin{document}
\global\long\def\inprod#1#2{\left\langle #1,#2\right\rangle }%

\global\long\def\inner#1#2{\left\langle #1,#2\right\rangle }%

\global\long\def\binner#1#2{\big\langle#1,#2\big\rangle}%

\global\long\def\norm#1{\left\Vert #1\right\Vert }%

\global\long\def\bnorm#1{\big\Vert#1\big\Vert}%

\global\long\def\Bnorm#1{\Big\Vert#1\Big\Vert}%

\global\long\def\red#1{\textcolor{red}{#1}}%

\global\long\def\blue#1{\textcolor{blue}{#1}}%

\global\long\def\brbra#1{\left(#1\right)}%

\global\long\def\Brbra#1{\left(#1\right)}%

\global\long\def\rbra#1{(#1)}%

\global\long\def\lrbra#1{\left(#1\right)}%

\global\long\def\sbra#1{[#1]}%

\global\long\def\bsbra#1{\left[#1\right]}%

\global\long\def\Bsbra#1{\Big[#1\Big]}%

\global\long\def\abs#1{\vert#1\vert}%

\global\long\def\babs#1{\big\vert#1\big\vert}%

\global\long\def\lrabs#1{\left|#1\right|}%

\global\long\def\cbra#1{\{#1\}}%

\global\long\def\bcbra#1{\left\{  #1\right\}  }%

\global\long\def\Bcbra#1{\left\{  #1\right\}  }%

\global\long\def\matr#1{\bm{#1}}%

\global\long\def\til#1{\tilde{#1}}%

\global\long\def\wtil#1{\widetilde{#1}}%

\global\long\def\wh#1{\widehat{#1}}%

\global\long\def\mcal#1{\mathcal{#1}}%

\global\long\def\mbb#1{\mathbb{#1}}%

\global\long\def\mtt#1{\mathtt{#1}}%

\global\long\def\ttt#1{\texttt{#1}}%

\global\long\def\dtxt{\textrm{d}}%

\global\long\def\aeq{\overset{(a)}{=}}%

\global\long\def\bignorm#1{\bigl\Vert#1\bigr\Vert}%

\global\long\def\Bignorm#1{\Bigl\Vert#1\Bigr\Vert}%

\global\long\def\rmn#1#2{\mathbb{R}^{#1\times#2}}%

\global\long\def\deri#1#2{\frac{d#1}{d#2}}%

\global\long\def\pderi#1#2{\frac{\partial#1}{\partial#2}}%

\global\long\def\limk{\lim_{k\rightarrow\infty}}%

\global\long\def\trans{\textrm{T}}%

\global\long\def\onebf{\mathbf{1}}%

\global\long\def\zerobf{\mathbf{0}}%

\global\long\def\zero{\bm{0}}%


\global\long\def\Euc{\mathrm{E}}%

\global\long\def\Expe{\mathbb{E}}%

\global\long\def\rank{\mathrm{rank}}%

\global\long\def\range{\mathrm{range}}%

\global\long\def\diam{\mathrm{diam}}%

\global\long\def\epi{\mathrm{epi} }%

\global\long\def\inte{\operatornamewithlimits{int}}%

\global\long\def\dist{\operatornamewithlimits{dist}}%

\global\long\def\proj{\operatornamewithlimits{Proj}}%

\global\long\def\cov{\mathrm{Cov}}%

\global\long\def\argmin{\operatornamewithlimits{argmin}}%

\global\long\def\argmax{\operatornamewithlimits{argmax}}%

\global\long\def\where{\operatornamewithlimits{where}}%

\global\long\def\conv{\operatornamewithlimits{conv}}%

\global\long\def\tr{\operatornamewithlimits{tr}}%

\global\long\def\dist{\operatorname{dist}}%

\global\long\def\sign{\operatornamewithlimits{sign}}%

\global\long\def\prob{\mathrm{Prob}}%

\global\long\def\st{\operatornamewithlimits{s.t.}}%

\global\long\def\dom{\mathrm{dom}}%

\global\long\def\prox{\mathrm{prox}}%

\global\long\def\diag{\mathrm{diag}}%

\global\long\def\and{\mathrm{and}}%

\global\long\def\as{\textup{a.s.}}%

\global\long\def\ae{\textup{a.e.}}%

\global\long\def\Var{\operatornamewithlimits{Var}}%

\global\long\def\Cov{\operatornamewithlimits{Cov}}%

\global\long\def\raw{\rightarrow}%

\global\long\def\law{\leftarrow}%

\global\long\def\Raw{\Rightarrow}%

\global\long\def\Law{\Leftarrow}%

\global\long\def\vep{\varepsilon}%

\global\long\def\dom{\operatornamewithlimits{dom}}%

\global\long\def\err{\operatorname{err}}%

\global\long\def\soc{\operatorname{soc}}%

\global\long\def\rsoc{\operatorname{rsoc}}%

\global\long\def\tsum{{\textstyle {\sum}}}%

\global\long\def\Cbb{\mathbb{C}}%

\global\long\def\Ebb{\mathbb{E}}%

\global\long\def\Fbb{\mathbb{F}}%

\global\long\def\Nbb{\mathbb{N}}%

\global\long\def\Rbb{\mathbb{R}}%

\global\long\def\extR{\widebar{\mathbb{R}}}%

\global\long\def\Pbb{\mathbb{P}}%

\global\long\def\Mrm{\mathrm{M}}%

\global\long\def\Acal{\mathcal{A}}%

\global\long\def\Bcal{\mathcal{B}}%

\global\long\def\Ccal{\mathcal{C}}%

\global\long\def\Dcal{\mathcal{D}}%

\global\long\def\Ecal{\mathcal{E}}%

\global\long\def\Fcal{\mathcal{F}}%

\global\long\def\Gcal{\mathcal{G}}%

\global\long\def\Hcal{\mathcal{H}}%

\global\long\def\Ical{\mathcal{I}}%

\global\long\def\Kcal{\mathcal{K}}%

\global\long\def\Lcal{\mathcal{L}}%

\global\long\def\Mcal{\mathcal{M}}%

\global\long\def\Ncal{\mathcal{N}}%

\global\long\def\Ocal{\mathcal{O}}%

\global\long\def\Pcal{\mathcal{P}}%

\global\long\def\Scal{\mathcal{S}}%

\global\long\def\Tcal{\mathcal{T}}%

\global\long\def\Xcal{\mathcal{X}}%

\global\long\def\Ycal{\mathcal{Y}}%

\global\long\def\Zcal{\mathcal{Z}}%

\global\long\def\i{i}%

\global\long\def\abf{\mathbf{a}}%

\global\long\def\bbf{\mathbf{b}}%

\global\long\def\cbf{\mathbf{c}}%

\global\long\def\ebf{\mathbf{e}}%

\global\long\def\fbf{\mathbf{f}}%

\global\long\def\hbf{\mathbf{h}}%

\global\long\def\qbf{\mathbf{q}}%

\global\long\def\gbf{\mathbf{g}}%

\global\long\def\lambf{\bm{\lambda}}%

\global\long\def\alphabf{\bm{\alpha}}%

\global\long\def\sigmabf{\bm{\sigma}}%

\global\long\def\thetabf{\bm{\theta}}%

\global\long\def\deltabf{\bm{\delta}}%

\global\long\def\lbf{\mathbf{l}}%

\global\long\def\ubf{\mathbf{u}}%

\global\long\def\pbf{\mathbf{\mathbf{p}}}%

\global\long\def\vbf{\mathbf{v}}%

\global\long\def\wbf{\mathbf{w}}%

\global\long\def\xbf{\mathbf{x}}%

\global\long\def\ybf{\mathbf{y}}%

\global\long\def\zbf{\mathbf{z}}%

\global\long\def\dbf{\mathbf{d}}%

\global\long\def\Wbf{\mathbf{W}}%

\global\long\def\Abf{\mathbf{A}}%

\global\long\def\Gbf{\mathbf{G}}%

\global\long\def\Ubf{\mathbf{U}}%

\global\long\def\Pbf{\mathbf{P}}%

\global\long\def\Ibf{\mathbf{I}}%

\global\long\def\Ebf{\mathbf{E}}%

\global\long\def\Mbf{\mathbf{M}}%

\global\long\def\Dbf{\mathbf{D}}%

\global\long\def\Qbf{\mathbf{Q}}%

\global\long\def\Lbf{\mathbf{L}}%

\global\long\def\Pbf{\mathbf{P}}%

\global\long\def\Xbf{\mathbf{X}}%

\global\long\def\abm{\bm{a}}%

\global\long\def\bbm{\bm{b}}%

\global\long\def\cbm{\bm{c}}%

\global\long\def\dbm{\bm{d}}%

\global\long\def\ebm{\bm{e}}%

\global\long\def\fbm{\bm{f}}%

\global\long\def\gbm{\bm{g}}%

\global\long\def\hbm{\bm{h}}%

\global\long\def\pbm{\bm{p}}%

\global\long\def\qbm{\bm{q}}%

\global\long\def\rbm{\bm{r}}%

\global\long\def\sbm{\bm{s}}%

\global\long\def\tbm{\bm{t}}%

\global\long\def\ubm{\bm{u}}%

\global\long\def\vbm{\bm{v}}%

\global\long\def\wbm{\bm{w}}%

\global\long\def\xbm{\bm{x}}%

\global\long\def\ybm{\bm{y}}%

\global\long\def\zbm{\bm{z}}%

\global\long\def\Abm{\bm{A}}%

\global\long\def\Bbm{\bm{B}}%

\global\long\def\Cbm{\bm{C}}%

\global\long\def\Dbm{\bm{D}}%

\global\long\def\Ebm{\bm{E}}%

\global\long\def\Fbm{\bm{F}}%

\global\long\def\Gbm{\bm{G}}%

\global\long\def\Hbm{\bm{H}}%

\global\long\def\Ibm{\bm{I}}%

\global\long\def\Jbm{\bm{J}}%

\global\long\def\Lbm{\bm{L}}%

\global\long\def\Obm{\bm{O}}%

\global\long\def\Pbm{\bm{P}}%

\global\long\def\Qbm{\bm{Q}}%

\global\long\def\Rbm{\bm{R}}%

\global\long\def\Ubm{\bm{U}}%

\global\long\def\Vbm{\bm{V}}%

\global\long\def\Wbm{\bm{W}}%

\global\long\def\Xbm{\bm{X}}%

\global\long\def\Ybm{\bm{Y}}%

\global\long\def\Zbm{\bm{Z}}%

\global\long\def\lambm{\bm{\lambda}}%

\global\long\def\alphabm{\bm{\alpha}}%

\global\long\def\albm{\bm{\alpha}}%

\global\long\def\taubm{\bm{\tau}}%

\global\long\def\mubm{\bm{\mu}}%

\global\long\def\yrm{\mathrm{y}}%

\global\long\def\rone{\text{\ensuremath{\brbra{\textrm{I}}}}}%

\global\long\def\rtwo{\brbra{\text{II}}}%

\global\long\def\rthree{\brbra{\text{\textrm{III}}}}%

\global\long\def\rfour{\brbra{\text{\textrm{IV}}}}%

\global\long\def\rfive{\brbra{\text{V}}}%

\global\long\def\rsix{\brbra{\text{\textrm{VI}}}}%

\global\long\def\rseven{\brbra{\text{VI\textrm{I}}}}%

\global\long\def\reight{\brbra{\text{VI\textrm{I}I}}}%

\global\long\def\aleq{\overset{(a)}{\leq}}%

\global\long\def\bleq{\overset{(b)}{\leq}}%

\global\long\def\cleq{\overset{(c)}{\leq}}%

\global\long\def\dleq{\overset{(d)}{\leq}}%

\global\long\def\ageq{\overset{(a)}{\geq}}%

\global\long\def\bgeq{\overset{(b)}{\geq}}%

\global\long\def\cgeq{\overset{(c)}{\geq}}%

\global\long\def\beq{\overset{(b)}{=}}%

\global\long\def\ceq{\overset{(c)}{=}}%

\global\long\def\deq{\overset{(d)}{=}}%

\global\long\def\vbfp{\vbf_{\text{p}}}%

\global\long\def\vbfd{\vbf_{\text{d}}}%

\global\long\def\tp{t_{\text{p}}}%

\global\long\def\td{t_{\text{d}}}%

\global\long\def\tab{\qquad}%

\global\long\def\btab{\hspace{1.2cm}}%

\global\long\def\bbtab{\hspace{1.8cm}}%

\global\long\def\Lin{\operatorname{Lin}}%

\global\long\def\Span{\operatorname{Span}}%

\global\long\def\supp{\operatorname{supp}}%

\global\long\def\holder{\text{Hölder}}%
\global\long\def\apex{\text{APEX}}%
\global\long\def\pws{\text{PWS}}%
\global\long\def\rapex{\text{r}\apex}%
\global\long\def\flag{\text{\textbf{Flag}}}%
\global\long\def\false{\text{\textbf{\text{False}}}}%
\global\long\def\true{\text{\textbf{True}}}%
\global\long\def\Wcer{\text{W-certificate}}%
\global\long\def\lowerflag{\text{\textbf{Lower-Flag}}}%
\global\long\def\Adet{\mbb A_{\text{det}}}%
\global\long\def\Azr{\mbb A_{\text{zr}}}%
\global\long\def\calZA{\mcal Z_{\mcal A}}%
\global\long\def\onestep{\text{One-Step}}%
\global\long\def\quarflag{\text{\textbf{Cert-Flag}}}%
\global\long\def\wrapex{\texttt{wrAPEX}}%
\global\long\def\maxquad{\texttt{MAXQUAD}}%

\newcommand{\jim}[1]{\textcolor{red}{\textbf{#1}}}
\newcommand{\lzw}[1]{\textcolor{blue}{\textbf{#1}}}

\def \fstar {f^*}
\def \Lavg {L_\text{avg}}

\global\long\def\bundleSize{B}%

\title{Accelerated Prox-Level Methods for Unknown Piecewise-Smooth Optimization I: Convex Optimization}

\author{Zhenwei Lin\thanks{lin2193@purdue.edu, School of Industrial Engineering, Purdue University} \qquad\qquad\quad Zhe Zhang \thanks{zhan5111@purdue.edu, School of Industrial Engineering, Purdue University}  }

\maketitle

\begin{abstract}
    We introduce a nearly parameter-free algorithm for minimizing piecewise smooth (PWS) convex functions under the quadratic-growth (QG) condition, where the locations and structure of the smooth regions are entirely \textit{unknown}. Our algorithm, \apex{} (Accelerated Prox-Level method for Exploring Piecewise Smoothness), is an accelerated bundle-level method designed to adaptively exploit the underlying PWS structure. For this setting, APEX achieves the best-known oracle-complexity result among existing first-order methods, improving the dependence on the condition number relative to prior bundle-level guarantees. Furthermore, APEX generates a verifiable and accurate termination certificate, enabling a robust, nearly parameter-free implementation. To the best of our knowledge, APEX is the first
algorithm to simultaneously achieve the best-known first-order oracle complexity for PWS optimization and provide
certificate guarantees.
\end{abstract}


\section{Introduction}


Nonsmooth objective functions are central to modern optimization, particularly in machine learning and operations research. Yet, nonsmoothness remains a major hurdle for first-order methods required in large-scale settings; the linear convergence rates of first-order methods for smooth, strongly convex problems \cite{lan2020first,nesterov2013introductory} deteriorate to sublinear rates for their nonsmooth counterparts. This fundamental gap is well-documented in lower complexity theory \cite{nemirovskij1983problem}. To overcome this limitation, algorithms must exploit the underlying structure of the objective. In this work, we investigate how to exploit piecewise-smooth structures to bridge the divide between smooth and nonsmooth optimization.

Specifically, we consider the problem of minimizing a nonsmooth convex function $f(x)$ over a convex set $X$:
\begin{equation}
f^{*}:=\min_{x\in X} f(x). \label{eq:problem}
\end{equation}
Under the assumption that the objective $f$ satisfies the quadratic-growth (QG) condition (see Definition~\ref{def:QG}), we investigate the oracle complexity of first-order methods for finding an $\vep$-optimal solution, measured by the number of (sub)gradient evaluations required to reach a point $x$ such that $f(x) - \fstar \leq \vep$.
We focus on \textit{unknown} piecewise smooth (PWS) functions: the domain $X$ admits a finite partition into regions on each of which the function is smooth, but the partition is not known a priori (Definition~\ref{def:piecewise_smooth}).
Such functions arise, for example, in quadratic Model Predictive Control (MPC)~\cite{kouvaritakis2016model} and two-stage stochastic linear programming (SLP)~\cite{dantzig1955linear}, where the regions are determined by the active constraints and the optimal dual solution in the second-stage problem, respectively, and hence are not available a priori. They also arise in ReLU networks~\cite{nair2010rectified}, where regions are determined by the composition of activation functions. As network depth increases, these pieces become more difficult to characterize a priori.



The challenge is to design algorithms that automatically exploit this $\pws$ structure to surpass the performance of generic nonsmooth methods~\cite{drusvyatskiy2018error,drusvyatskiy2019efficiency} and, ideally, to match the performance of smooth optimization. While existing work has predominantly addressed the special case where the objective is a maximum of smooth components~\cite{carmon2021thinking,nesterov2005smooth}, the general setting of unknown PWS structure remains largely unexplored. In particular, three major gaps remain in the current literature: the lack of accelerated global convergence guarantees, sensitivity to unavailable problem-specific parameters, and the lack of efficient, accurate, and verifiable certificate generation.

\paragraph{\uline{Lack of accelerated global convergence rates for unknown PWS optimization.}}
Existing work provides several approaches to PWS optimization, frequently leveraging the fact that convex PWS functions can be represented as the maximum of smooth component functions. One  line of work~\cite{carmon2021thinking,nesterov2013introductory} directly targets this representation; when gradients for all underlying components are available, prox-linear methods~\cite{nesterov2013introductory,zhang2022solving,lan2023optimal,srd} can achieve global linear convergence. In the more general "unknown" setting, where only the gradient of the currently active component is accessible, the survey descent method by Han and Lewis~\cite{osti_10410938} utilizes gradients at multiple "survey points" to capture the local max-of-smooth structure. While this approach achieves local linear convergence, global convergence guarantees remain unavailable.

Another research direction with significant empirical success is the bundle method~\cite{de2014level,oliveira2014bundle,lemarechal1995new,kiwiel1995proximal,mifflin2012science}. It aggregates past subgradients to build a cutting-plane model of the objective, and generates new iterates by projecting a candidate point onto a level set defined by this model. Theoretical justification for its superior empirical performance was established in the recent work of Zhang and Sra~\cite{zhang2025linearly}, which proved that the bundle-level (BL) method~\cite{lemarechal1995new} solves unknown PWS problems with the same oracle complexity as smooth optimization.

However, as an unaccelerated method, the BL method is inherently suboptimal for smooth problems. It remains unclear whether its accelerated counterpart, the APL method \cite{lan2015bundle}, can effectively leverage PWS structures. Our preliminary empirical evidence, illustrated in Figure~\ref{fig:The-illustration-demonstrates-2}, suggests a negative answer: the theoretically slower BL method consistently outperforms both APL and its restarted variants on the $\texttt{MAXQUAD}$ problem, a prototypical example of PWS functions.
This performance discrepancy highlights the need for a new algorithmic framework that bridges the gap between acceleration and structural exploitation.
\textit{We aim to design an accelerated method that effectively exploits unknown piecewise structures.}


\paragraph{\uline{Sensitivity to unavailable problem-specific parameters.}}
Efficient accelerated algorithms typically require knowledge of the QG modulus $\mu$ to achieve fast convergence. However, because the QG condition is defined globally, $\mu$ is often difficult to estimate in practice~\cite{lan2023optimal}. Although Lan et al. recently proposed parameter-free methods for smooth optimization that operate without knowing $\mu$~\cite{lan2023optimal}, it remains an open question whether similar guarantees can be achieved in the more challenging PWS setting.

Beyond the QG modulus, the convergence of first-order methods is also governed by the empirical Lipschitz smoothness constant, defined at iteration $k$ as:
\begin{equation}\label{eq:empirical_Lipschitz_constant}
    \hat{L}_k = \frac{2 \left[ f(x^{k+1}) - f(x^k) - \inner{f^{\prime}(x^k)}{x^{k+1} - x^k} \right]_+}{\norm{x^{k+1} - x^k}^2}.
\end{equation}
In smooth optimization, the literature generally distinguishes between two types of complexity guarantees: those based on the worst-case empirical smoothness constant, $L_{\max} = \max_{k} \hat{L}_k$, and those based on an averaged smoothness constant $L_{\text{avg}}$, typically a weighted average of $\{\hat{L}_k\}$. While analysis in terms of $L_{\max}$ is often more straightforward~\cite{li2025simple,lan2023optimal}, bounds based on $L_{\text{avg}}$ are typically tighter and more reflective of superior empirical performance~\cite{nesterov2015universal}. In PWS optimization, the gap between $L_{\max}$ and $L_{\text{avg}}$ can be even more substantial due to spikes in $\hat{L}_k$ that occur when iterates cross into different smooth regions (see Example~\ref{ex:intro-core}).
\textit{It is therefore natural to seek a parameter-free algorithm that, without knowledge of $\mu$, attains an accelerated complexity bound associated with the average empirical Lipschitz smoothness constant, $\Lavg$.}

\begin{exbox}{Average versus worst-case empirical Lipschitz smoothness constants}{intro-core}
    In the PWS setting, the gradient is continuous within each smooth region but may be discontinuous across their boundaries. This discontinuity can cause the empirical Lipschitz smoothness constant $\hat{L}_k$ \eqref{eq:empirical_Lipschitz_constant} to spike when iterates $x^k$ and $x^{k+1}$ belong to different regions. Since the worst-case empirical Lipschitz smoothness constant $L_{\max}$ is determined by these spikes, it can be significantly larger than the average $L_{\text{avg}}$.

    To illustrate this, consider the iterates generated by applying the Polyak-step to the following one-dimensional objective function with two smooth regions, $\{x \geq 0\}$ and $\{x < 0\}$:
    $$\min_{x \in \bsbra{-M, +\infty}} f(x;i) = (ix + 1) \cdot \onebf_{\bcbra{x \geq 0}} + e^x \cdot \onebf_{\bcbra{x < 0}}.$$
    The objective is parameterized by $i \in \mathbb{N}_+$ and $M \in \mathbb{R}_+$. Starting from an initial point $x^0 = 1/i$, we obtain $L_{\max} = i^2 e^{-1/i}/2$, while $L_{\text{avg}} \rightarrow e^{-M}$ as the method progresses. Consequently, the ratio $L_{\max} / L_{\text{avg}}$ can be arbitrarily large for certain problem instances. Figure~\ref{fig:worst_average_L} illustrates how these two Lipschitz constants evolve over iterations.\end{exbox}

\paragraph{\uline{Lack of efficient, accurate, and verifiable certificates.}}
In convex optimization, beyond identifying an accurate candidate solution, it is equally important to provide a termination certificate that verifies the solution's quality, specifically the optimality gap. Such certificates are essential for early termination and for developing adaptive methods that perform in-situ adjustments to unknown problem parameters. In smooth optimization, the gradient norm is the standard choice for this purpose. However, in nonsmooth optimization, despite significant effort, the existing approaches suffer from either computational intractability or limited accuracy. For instance, the gradient norm is no longer a reliable indicator of optimality, computing a Moreau-stationary certificate is  expensive~\cite{moreau1965proximite,davis2018stochasticsubgradientmethodconverges}, and Goldstein stationarity~\cite{goldstein1977optimization,davis2022gradient,lin2022gradient} cannot readily accommodate the set constraint $X$. For PWS optimization, a more recent approach called the Normalized Wolfe Certificate ($\Wcer$)~\cite{zhang2025linearly} partially addresses these issues and has been used to design algorithms adaptive to an unknown QG modulus $\mu$. Nevertheless, this certificate remains relatively inaccurate, as the guaranteed optimality gap can be off by a factor of $O(L/\mu)$. This inaccuracy leads to suboptimal oracle complexity bounds for the resulting adaptive methods~\cite{zhang2025linearly}.
\textit{This gap in the literature prompts us to ask whether it is possible to efficiently generate a termination certificate that is accurate to within a constant $O(1)$ factor. }

\begin{figure}
\raggedright{}%
\begin{minipage}[t]{0.45\columnwidth}%
\begin{center}
\includegraphics[width=\textwidth]{./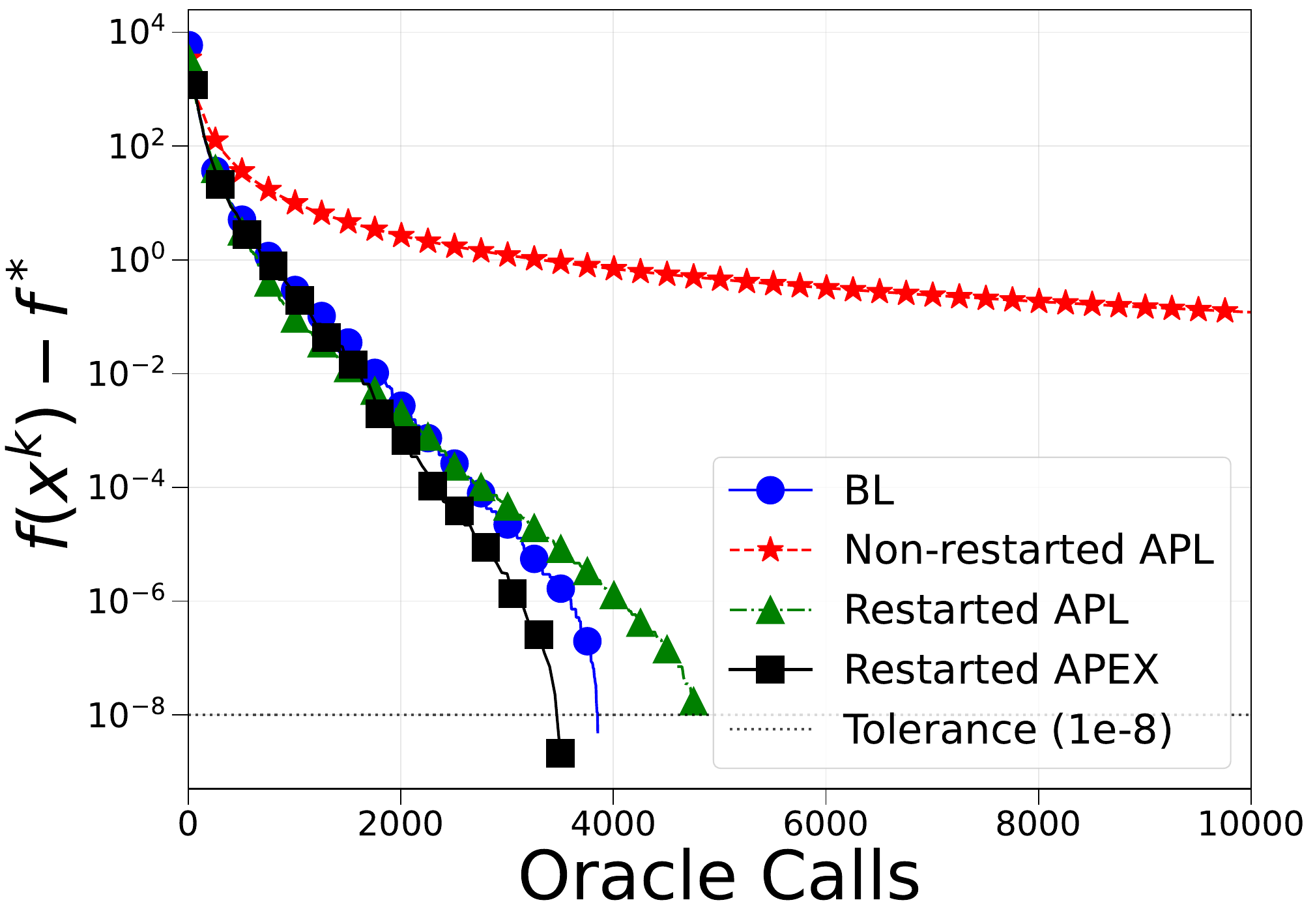}\caption{Convergence comparison between BL, APL, and Restarted APL on a randomly generated $\maxquad$ problem. \label{fig:The-illustration-demonstrates-2}}
\par\end{center}
\end{minipage}
\hfill
\begin{minipage}[t]{0.45\columnwidth}%
\begin{center}
\includegraphics[width=\textwidth]{./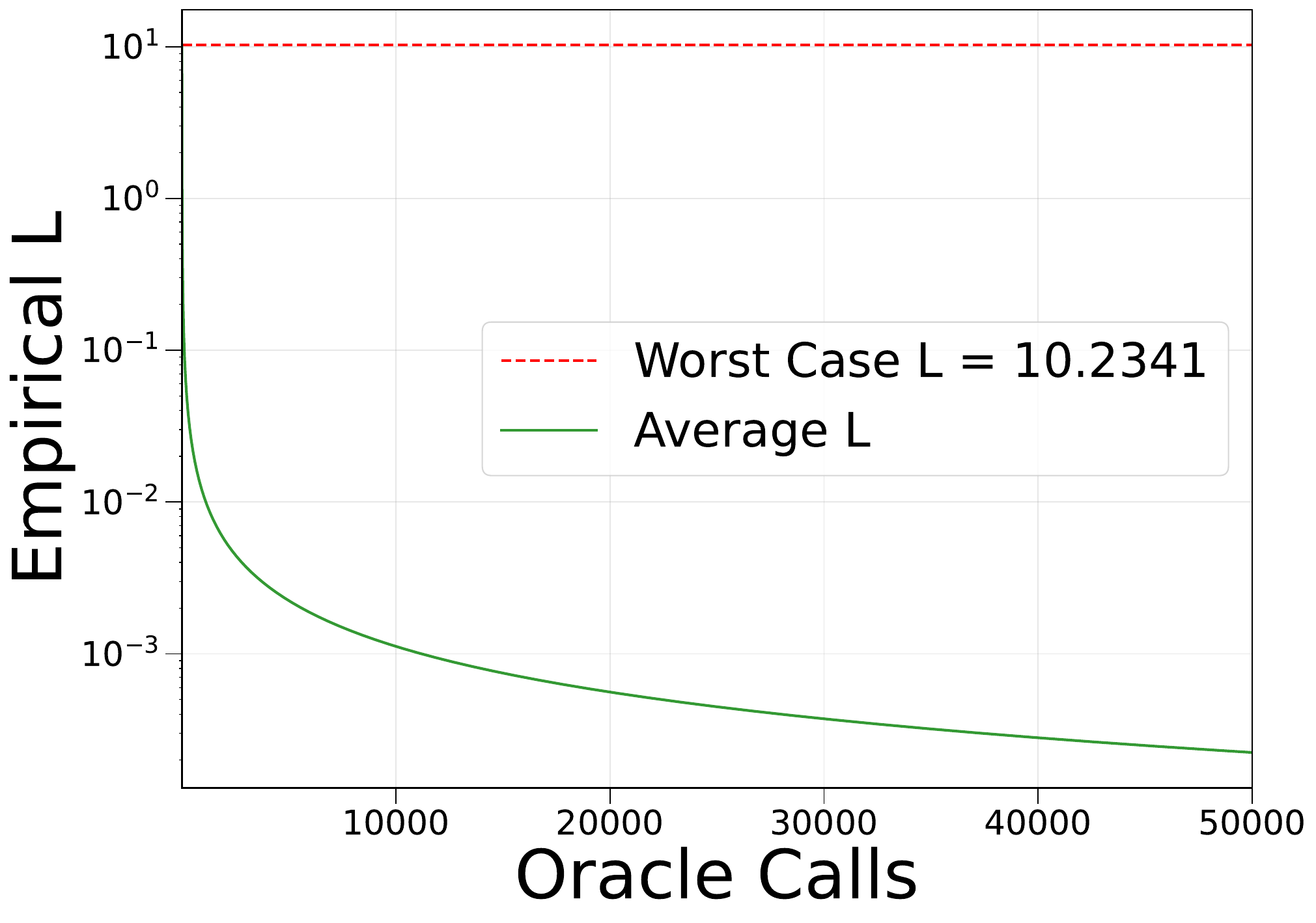}\caption{
    Worst and average empirical Lipschitz smoothness constant comparison when running Polyak-step in Example~\ref{ex:intro-core} with parameter $i=5$, $M=10^{10}$.
    \label{fig:worst_average_L}}
\par\end{center}%
\end{minipage}
\end{figure}

In light of these observations, the core challenges of unknown PWS optimization can be distilled into a single, fundamental question:
\[\ovalbox{\begin{minipage}{0.8\columnwidth - 2\fboxsep - 0.8pt}%
    \centering \it
    Can we design a parameter-free algorithm that achieves the accelerated convergence rate for unknown PWS optimization while simultaneously generating an accurate termination certificate?
\end{minipage}}
    \]

In this work, we answer this question in the affirmative by introducing \apex{}, a novel BL method that systematically addresses all three major gaps identified above.


\begin{table}[htbp]
    \centering
    \caption{
        Comparison of key properties between different methods for PWS optimization. "$\checkmark$": can handle unknown PWS optimization; "$\dagger$": using the proof technique developed in our paper, we obtain average empirical Lipschitz smoothness constant dependence for APL; however, the original analysis in~\cite{lan2015bundle} yields dependence on the worst-case empirical Lipschitz smoothness constant. QCQP: Quadratically Constrained Quadratic Programming; QP: quadratic programming; bounded LP: linear programming over a bounded region.
    }
      \begin{tabular}{llll}
      \toprule
      \multicolumn{1}{c}{\multirow{2}[2]{*}{Methods}} & \multicolumn{1}{c}{Unknown} & \multicolumn{1}{c}{Subproblem} & \multicolumn{1}{c}{First-order oracle}  \\
            & \multicolumn{1}{c}{ PWS} & \multicolumn{1}{c}{type} &     \multicolumn{1}{c}{complexity}   \\
      \midrule
      Survey Descent~\cite{osti_10410938} & $\checkmark$ & QCQP & local linear rate \\
      BL~\cite{zhang2025linearly}    & $\checkmark$ & QP & $O(1)\left(k^2L_{\mathrm{avg}}/\mu^* \cdot \log\brbra{{1}/{\varepsilon}}\right)$ \\
      APL~\cite{lan2015bundle}   & unknown & QP + bounded LP & $O(1)\left(\sqrt{L_{\mathrm{avg}}}/{\sqrt{\varepsilon}}\right)^{\dagger}$\\
      $\apex$ (ours)  & $\checkmark$ & QP & $O(1)\left( k\sqrt{L_{\mathrm{avg}}/\mu^*}\cdot \log\brbra{{1}/{\varepsilon}}\right)$\\
      \bottomrule
      \end{tabular}%
    \label{tab:complexity_comparison}%
  \end{table}%

Our contributions are organized as follows:
\begin{enumerate}
\item \textbf{Accelerated convergence rate for PWS optimization.}
We introduce an accelerated bundle-level method termed \textsc{APEX} (Accelerated Prox-Level method for Exploring piecewise-smooth structure) in Section~\ref{sec:Accelerated-Prox-level-method}. We prove that provided the bundle size is at least the number of relevant pieces encountered along the trajectory, \textsc{APEX} achieves an optimal iteration complexity of $O\left(k \sqrt{L/\mu^*} \log(1/\vep)\right)$ to compute an $\vep$-optimal solution for objectives satisfying $(k, L)$-PWS (Definition~\ref{def:piecewise_smooth}) and $\mu^*$-QG (Definition~\ref{def:QG}) conditions. This result significantly improves upon the $O\left(k^2 L/\mu^* \log(1/\vep)\right)$ complexity reported in \cite{zhang2025linearly}. Our theoretical advancements translate to superior practical performance, as demonstrated in Figure~\ref{fig:The-illustration-demonstrates-2}. Consequently, to the best of our knowledge, \apex{} is the first first-order method in this oracle model that achieve an accelerated convergence rate for unknown piecewise-smooth optimization.

\item \textbf{Accelerated certificate generation for PWS optimization.}
In Section~\ref{sec:Normalized-Wolfe-Certificate}, we introduce an accelerated bundle-level (BL) framework capable of constructing the $\Wcer$ (see Definition~\ref{def:W_certificate}) with significantly higher accuracy than previous methods~\cite{zhang2025linearly}. While prior certification techniques could only bound the optimality gap within a factor of $O(L/\mu)$, our approach improves this bound to a constant factor, $O(1)$. This enhancement allows the certificate to serve as a robust proxy for the true optimality gap. Consequently, the certificate becomes a critical tool for dynamically updating the target level (an estimate of the optimal objective value, $f^*$), an essential parameter for the bundle-level method. Leveraging this accurate dynamic update, we design a new parameter-free algorithm, as detailed in our next contribution.

\item \textbf{Adaptive and nearly parameter-free methods for PWS optimization.}
We develop $\rapex$ in Section~\ref{sec:Restarted_mu_unknown}, a restarted and nearly parameter-free variant of $\apex$. It only requires the number of cuts in the cutting-plane model is large enough to capture the relevant smooth pieces encountered along the trajectory. It adaptively exploits the problem structure and provably achieves the accelerated complexity with respect to the average empirical Lipschitz smoothness constant, making it both theoretically sound and practically efficient. To our knowledge, $\rapex$ is the first nearly parameter-free method for convex unknown PWS optimization under QG that attains the accelerated complexity. {A detailed comparison of the properties of different methods for PWS optimization is shown in Table~\ref{tab:complexity_comparison}.}

\item \textbf{Promising practical performance.}
We validate our approach through experiments on two representative PWS problems: \maxquad{} and two-stage stochastic linear programming problems. The \maxquad{} problem, a simulated setting, allows precise control over both the smoothness and the number of pieces in the PWS function, enabling rigorous verification of our theoretical claims. For two-stage stochastic linear programming, a widely studied real-world application, we assess the practical performance of our algorithm. Across both tasks, our results demonstrate that $\apex$ delivers strong practical effectiveness for unknown PWS optimization problems.
\end{enumerate}

\subsection{Related Work}

\paragraph{Algorithms for nonsmooth optimization: }
\begin{enumerate}[label=\arabic*)]
\item \textbf{Bundle methods}.  Bundle methods, rooted in Kelley's
cutting-plane scheme~\cite{kelley1960cutting},
iteratively refine a piecewise linear approximation of the objective.
Two main variants have been developed:
the proximal bundle method
and the bundle-level method,
distinguished by whether the linear model
enters the subproblem as part of the objective (proximal)
or as a constraint (level).
\begin{enumerate}
    \item \textbf{Proximal bundle method.} The proximal bundle method was introduced in the 1970s~\cite{lemarechal2009extension,mifflin1977algorithm}
    and usually comprises two
    parts: serious steps and null steps. Serious steps are used to ensure
    a decrease of the objective, and null steps are used to refine the information
    of the descent direction~\cite{bagirov2014introduction}.  The complexity
    analysis for the proximal bundle method has been widely studied in
    works~\cite{kiwiel2000efficiency,du2017rate,diaz2023optimal,liang2021proximal}.
    For convex functions admitting $\mcal{VU}$ structure~\cite{mifflin2000mathcalvu},
    a variant of bundle methods that incorporates second-order information
    relative to a smooth subspace enjoys superlinear convergence in terms
    of serious steps~\cite{mifflin2005algorithm}. Furthermore, the proximal
    bundle method has been extended to handle
    inexact oracles~\cite{de2014convex} and non-convex objectives~\cite{hare2010redistributed,de2019proximal}.
    \item \textbf{Bundle level method.} Introduced by Lemaréchal, Nesterov, and Nemirovski~\cite{lemarechal1995new},
    the bundle-level (BL) method achieves an oracle complexity of $O(1/\vep^{2})$
    for Lipschitz continuous nonsmooth convex problems.
    Since then, it has been extended to constrained convex programs,
    saddle-point problems, and variational inequalities~\cite{kiwiel1995proximal};
    generalized through non-Euclidean Bregman divergences~\cite{ben2005non};
    accelerated via Nesterov's techniques~\cite{lan2015bundle};
    and adapted to function-constrained settings~\cite{deng2024uniformly}.
    More recently, Zhang and Sra~\cite{zhang2025linearly} showed that the BL method
    attains linear convergence under the QG condition in PWS optimization.
    However, the resulting complexity's dependence
     on the condition number is suboptimal.
\end{enumerate}

\item \textbf{Goldstein method.} The seminal work of Goldstein~\cite{goldstein1977optimization} introduced the $\delta$-Goldstein subdifferential: $\partial_{\delta}f\brbra{x}:=\conv\brbra{\cup_{y\in \mcal B(x,\delta)}\partial f(y)}$ and defines a $\brbra{\delta,\vep}$-Goldstein stationary point as one satisfying $\min\bcbra{\norm{g}:g\in \partial_{\delta}f(x)}\leq \vep$. Goldstein~\cite{goldstein1977optimization} originally proposed a subgradient method using the minimal-norm element of $\partial_{\delta} f(x)$, which guarantees descent but is intractable to compute in practice. The gradient sampling algorithm~\cite{burke2002approximating,burke2020gradient,burke2005robust} circumvents this by approximating the $\delta$-Goldstein subdifferential through random sampling within $\mcal B(x,\delta)$. More recent work~\cite{pmlr-v119-zhang20p,davis2022gradient,tian2022finite} develops efficient randomized approximations for finding a $\brbra{\delta,\vep}$-Goldstein stationary point. Recently, Davis and Jiang~\cite{davis2024local} further proposed a randomized scheme of the Goldstein method that achieves nearly local linear convergence by exploring the underlying smooth substructures. Kong and Lewis~\cite{kong2025lipschitz} identify abstract properties that enable such near-linear convergence in Goldstein-type methods.

\item \textbf{Structured nonsmooth optimization.} To address the challenges of nonsmoothness, considerable effort has focused on exploiting problem structure. Sparse optimization is a classical example, where composite optimization methods~\cite{beck2009fast,nesterov2013gradient} handle objectives combining a smooth term with a prox-friendly nonsmooth regularizer. Problems involving the maximum of smooth functions can be addressed by prox-linear methods~\cite{drusvyatskiy2018error,drusvyatskiy2019efficiency}.  Various smoothing approaches~\cite{beck2017first,bohm2021variable,nesterov2005smooth,duchi2012randomized,chen2012smoothing} have also been widely applied to mitigate nonsmoothness. Nesterov~\cite{nesterov2005smooth} introduced smoothing techniques for structured nonsmooth problems. Moreau Envelope smoothing~\cite{davis2019stochastic,davis2018stochasticsubgradientmethodconverges} and randomized smoothing~\cite{duchi2012randomized} have received substantial attention.  More recently, Li and Cui~\cite{srd} proposed a subgradient regularization method for nonsmooth marginal functions, achieving notable progress.

\end{enumerate}

\paragraph{Certificate guarantees:}
In smooth optimization, the gradient norm is a widely accepted optimality certificate~\cite{lan2023optimal}. It offers two essential features. First, it is easily verifiable: given any candidate $\bar{y}$, one can directly compute $\norm{\nabla f(\bar{y})}$ without requiring additional problem parameters. Second, for an $L$-smooth $\mu$-strongly convex function, the gradient norm tightly characterizes the optimality gap:
\begin{equation}\label{eq:proportional}
    \frac{1}{2L}\norm{\nabla f\brbra{\bar{y}}}^2\leq f\brbra{\bar{y}}-f^*\leq \frac{1}{2\mu}\norm{\nabla f\brbra{\bar{y}}}^2\ .
\end{equation}
For nonsmooth but Lipschitz functions, verifiability is preserved, but the strong guarantees in~\eqref{eq:proportional} no longer hold. By Rademacher's theorem, Lipschitz functions are differentiable almost everywhere, which motivates defining the subdifferential as $\partial f(x):=\conv\bcbra{g:g=\lim_{t\to \infty}f^{\prime}(x^t),x^t\to x}$. With this definition, a point $x$ is stationary if  $0\in \partial f(x)$.
More generally,  $x$ is a $(\delta,\vep)$ near-approximate stationary point if $\norm{x-x^{\prime}}\leq\vep$ for some  $\vep$-stationary point $x^{\prime}$.
While Tian and So~\cite{tian2025testing} introduce the first oracle-polynomial-time algorithm for detecting near-approximate stationary points in piecewise affine functions, for general Lipschitz and bounded-below functions, finding such points cannot be guaranteed in polynomial time~\cite{kornowski2021oracle}.
As a relaxation, recent work~\cite{pmlr-v119-zhang20p,kornowski2021oracle} studies $(\delta,\varepsilon)$-Goldstein stationarity, which considers the convex hull of subgradients in a $\delta$-neighborhood.  Given a finite sample $\mathcal{P}=\bcbra{x^i}_{i=1}^m \subseteq \mathcal{B}(x;\delta)$, one can test $(\delta,\vep)$-Goldstein stationarity by: $\min_{\lambda\in \Delta_+}\norm{\sum_{i=1}^{m}\lambda_i f^{\prime}(x^i)}$, where $\Delta_+$ is the probability simplex. Approximate Goldstein stationarity is thus computationally tractable, though it does not imply bounds on the optimality gap. On the other hand,  for structured problems such as weakly convex functions, the Moreau stationary certificate is widely used. The Moreau envelope~\cite{moreau1965proximite} is defined as: $f_{\lambda}(x):=\min_{y}\bcbra{f(y)+\lambda \norm{y-x}^2/2}$, which is smooth when $\lambda$ exceeds the weak convexity modulus, with $\nabla f_{\lambda}(x)=\lambda\brbra{x-x_{\lambda}}$ and $x_{\lambda}=\argmin_{y}\bcbra{f(y)+\lambda \norm{y-x}^2/2}$. The main challenge is that computing $x_\lambda$ exactly is as hard as solving the original problem~\cite{davis2019stochastic}. Approximate Moreau-stationarity certificates~\cite{bohm2021variable} are possible, but they require an approximation $\hat{x}_{\lambda}$ with $\norm{\hat{x}_{\lambda}-{x}_{\lambda}}$ sufficiently small, which in turn demands an additional termination certificate for the proximal subproblem.
{Additionally, Nemirovski et al.~\cite{nemirovski2010accuracy} developed an accuracy certificate framework for convex optimization, variational inequalities with monotone operators, and convex Nash equilibrium problems, showing how such certificates can be constructed via ellipsoid or cutting-plane methods. Nevertheless, this framework has two crucial limitations: it only applies to bounded domains, and it does not accommodate error bounds such as the QG property. As a result, it produces only sublinear convergence rates for certificate generation.}

\subsection{Outline}

The remainder of this paper is organized as follows. We close this section by introducing the notation used throughout the paper. Section~\ref{sec:Preliminaries} provides key definitions and background on the bundle-level method. In Section~\ref{sec:Accelerated-Prox-level-method},
we propose $\apex$, which drives the objective down to a prescribed level
with a convergence rate that adapts to the {\pws}
structure. To remove the need to know the target level in advance,
Section~\ref{sec:Normalized-Wolfe-Certificate} develops an accelerated
Wolfe-certificate search procedure. Building on this certificate,
we then study {\pws} functions satisfying a $\mu^*$-QG
condition: first assuming $\mu^*$ is known (Section~\ref{sec:Restarted_mu_known}),
and then treating the case where $\mu^*$ is unknown (Section~\ref{sec:Restarted_mu_unknown}).
Section~\ref{sec:Numerical-Study} presents
numerical experiments that validate the practical performance of our
algorithms.

\subsection{Notation}

Throughout the paper, we use the following notation. Let $[n]:=\{1,\ldots,n\}$
for integer $n$. For integer $t$ and $m<t$, let $[t-m, t]:=\bcbra{t-m, t-m+1, \ldots, t}$. The $l_{q}$-norm of a vector $v\in\mbb R^{n}$
is defined as $\norm v_{q}=\brbra{\sum_{i=1}^{n}\abs{v_{(i)}}^{q}}^{1/q},$
where $v_{(i)}$ is the $i\text{-th}$ entry of vector $v$. For brevity,
$\norm{\cdot}$ denotes the $l_{2}$-norm. The inner product
of two vectors is defined as $\inner uv=\sum_{i=1}^{n}u_{(i)}v_{(i)}$.
We denote the distance from $x$ to a set $X$ as $\dist(x,X)=\min_{\bar{x}\in X}\norm{x-\bar{x}}.$
For brevity, we use $f^{\prime}(x)$ to denote a subgradient of $f$ at $x$,
which coincides with the gradient when $f\brbra x$ is differentiable.
The linearization of a function $f$ at point $\bar{x}$ is given
by $\ell_{f}\brbra{x;\bar{x}}:=f\brbra{\bar{x}}+\inner{f^{\prime}\brbra{\bar{x}}}{x-\bar{x}}.$
The indicator function is defined as $\onebf_{\bcbra A}={\renewcommand{\arraystretch}{0.7}\footnotesize\begin{cases}
1, & \text{if }A\text{ is true}\\
0, & \text{otherwise}
\end{cases}}$. The Euclidean ball centered at $\bar{x}$ with radius $\iota$ is
denoted as $\mcal B\brbra{\bar{x},\iota}=\bcbra{x:\norm{x-\bar{x}}\leq\iota}$.

\section{Preliminaries\label{sec:Preliminaries}}

In this section, we first provide several foundational definitions
and then introduce the basic BL method for convex nonsmooth
optimization. This background will help in understanding the algorithm
we propose later.

We begin with the key definitions of $\brbra{k,L}$-piecewise smoothness
and $\mu^*$-quadratic growth.
\begin{defn}
[$(k,L)$-Piecewise smoothness, $\brbra{k,L}$-\pws]\label{def:piecewise_smooth}We
say a function $f:X\to\mbb R$ is $(k,L)$-piecewise smooth if there
exists a covering of its domain $X$ by $k$ pieces $\bcbra{X_{i}}_{i=1}^{k}$
such that $X\subseteq \cup_{i=1}^{k} X_{i}$ and for each piece $X_{i}$, the restriction $f_{i}:=f_{\mid X_{i}}$
is $L$-smooth for some $L>0$. Specifically, we assume access to
a first-order oracle $f^{\prime}(x)$ such that the following inequality
holds for all $i\in\bsbra{k}$:
\begin{equation}\label{eq:pws_L_choice}
f\brbra x-\brbra{f(\bar{x}) + \inner{f^{\prime}(\bar{x})}{x-\bar{x}}}\leq L\norm{x-\bar{x}}^{2}/2\ ,\ \forall x,\bar{x}\in X_{i}\ .
\end{equation}
\end{defn}
Two remarks are given in order. First, the oracle $f^\prime$ in~\eqref{eq:pws_L_choice} is slightly stronger than a
standard convex subgradient oracle. At points in the interior of a smooth
piece, $f^\prime(\bar{x})$ coincides with the gradient. At boundary points,
however, the selected subgradient must satisfy~\eqref{eq:pws_L_choice} for all
$x$ in every piece $X_i$ containing $\bar{x}$. This boundary requirement is
mainly analytical: the practical algorithms below do not rely on repeatedly
querying such special boundary subgradients, and this assumption can be relaxed,
for example, by ensuring that iterates lie at differentiable points almost
surely. For simplicity, we assume throughout that the oracle
$f^\prime(\bar{x})$ satisfies~\eqref{eq:pws_L_choice}.
Second, this definition covers the familiar max-of-$k$-smooth-functions setting, but
also allows more general piecewise-smooth geometries. Figure~\ref{fig:maxquad_linear_example} shows that smooth
pieces can be disconnected. Figure~\ref{fig:trajector_polyak_bl} provides a simple illustration of the PWS structure in the two-dimensional case. Notably, optimizing such PWS objectives with the Polyak-step may lead to
zigzagging behavior~\cite{zhang2025linearly}.

\begin{figure}
    \raggedright{}%
    \begin{minipage}[t]{0.45\columnwidth}%
    \begin{center}
    \includegraphics[width=\textwidth]{./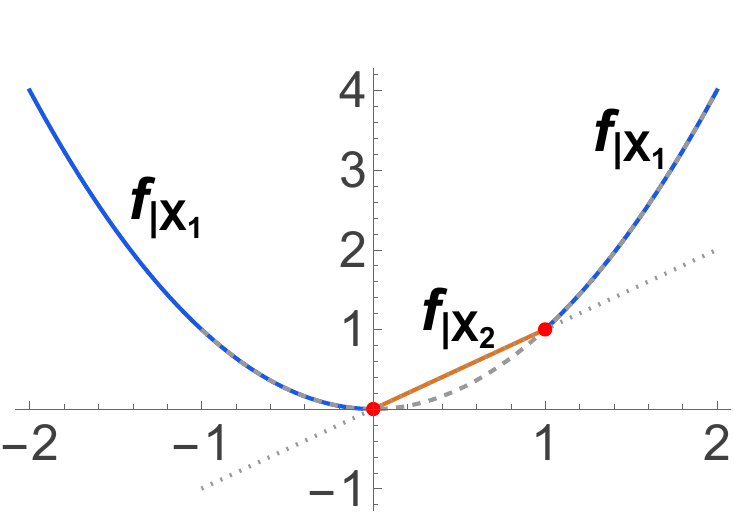}
    \caption{$\max\bcbra{{x}^2, x}$ is (2, 2)-$\pws$, where the region $X_1$ is disconnected.\label{fig:maxquad_linear_example}}
    \par\end{center}
    \end{minipage}
    \hfill
    \begin{minipage}[t]{0.50\columnwidth}%
    \begin{center}
    \includegraphics[width=0.85\textwidth]{./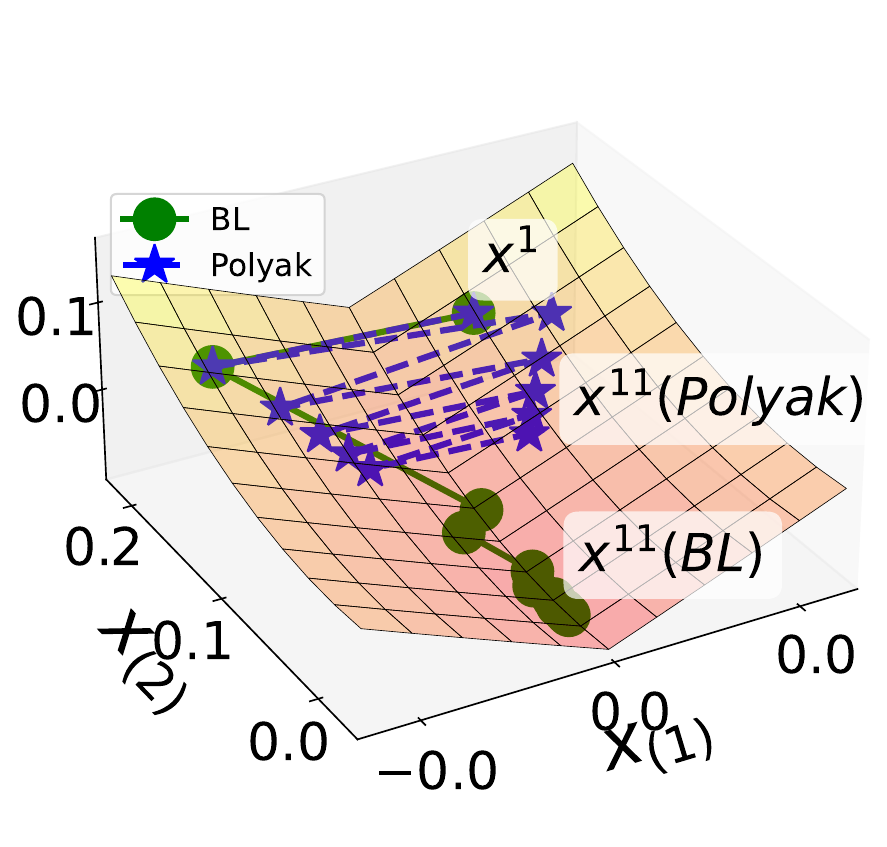}
    \caption{Trajectory comparison of Polyak-step and BL method on minimizing $\norm{x}^2+\abs{x_{(1)}}$ starting from $x^1=(0.01, 0.15)$.\label{fig:trajector_polyak_bl}}
    \par\end{center}%
    \end{minipage}
\end{figure}
\begin{defn}[Quadratic growth, QG]\label{def:QG}
 We say that $f:X\to\mbb R$
is $\mu$-quadratic growth if
$
f\brbra x-f^{*}\geq {\mu}\dist^{2}\brbra{x,X^{*}}/2,\forall x\in X
$, holds,
where $X^{*}$ is the nonempty minimizer set, i.e., $X^{*}=\argmin_{x\in X}f\brbra x$. Additionally, we define $\mu^*:=\sup\{\mu: f \text{ is } \mu\text{-QG}\}$ as the optimal QG modulus of $f$.
\end{defn}

{We briefly review the BL method, introduced in~\cite{lemarechal1995new} to improve the stability of the cutting-plane method.}
 At each iteration, the BL method
projects the current point onto a specific level set derived from the cutting
plane approximation. Specifically, the subproblem of BL is a diagonal quadratic
programming problem:
\[
x^{t}=\argmin_{x\in X}\norm{x-x^{t-1}}^{2}\ ,\ \st\ \ell_{f}\brbra{x;x^{i}}\leq\tilde{l}\ ,\ i \in [t-\bundleSize, t-1]\ ,
\]
where $\bundleSize$ is the prespecified number of cuts. If we set $\tilde{l}=f^*$ and $\bundleSize=1$, the BL method is equivalent to the Polyak-step~\cite{polyak1969minimization,polyak1987introduction}.
Figure~\ref{fig:trajector_polyak_bl} illustrates that Polyak steps may zigzag on
PWS objectives, while the BL method is more stable because it builds a local
model from multiple cutting planes. This distinction also appears in the
analysis: with $\bundleSize=1$, the Polyak-step admits a descent bound
involving only adjacent iterates:
\begin{equation}\label{eq:polyak_convergence}
\begin{aligned}
    f(x^{t+1})\leq &\ell_{f}(x^{t+1};x^{t})+{\hat{L}_t}\norm{x^{t+1}-x^{t}}^2/2\\
    \leq &f^*+{\hat{L}_t}(\norm{x^{t}-x^*}^2 - \norm{x^{t+1}-x^*}^2) /2
\end{aligned}\ ,
\end{equation}
where $\hat{L}_t$ represents the empirical Lipschitz smoothness constant estimated between $x^{t}$ and $x^{t+1}$. Hence, the convergence is strongly dependent on the empirical Lipschitz smoothness constant of adjacent iterates. When two adjacent iterates fall into different smooth pieces, $\hat{L}_t$ can be very large due to potential gradient discontinuities, leading to poorer theoretical and practical performance, as reflected in Polyak-step experiments (see Figure~\ref{fig:trajector_polyak_bl}).
In contrast, the BL method permits a refined analysis that leverages non-adjacent iterates. Specifically, its convergence can be established via inequalities of the form~\cite{zhang2025linearly}:
\begin{equation}\label{eq:BL_convergence}
\begin{aligned}
    f(x^{t+i})\leq& \ell_{f}(x^{t+i};x^{t}) + {\hat{L}_{t,i}} \norm{x^{t+i} - x^{t}}^2/2\\
    \leq& f^* + i \cdot {\hat{L}_{t,i}} \brbra{\norm{x^t-x^*}^2-\norm{x^{t+i}-x^*}^2}/2
\end{aligned}\ ,\forall i\in\bsbra{\bundleSize}\ ,
\end{equation}
where $\hat{L}_{t,i}$ is the empirical Lipschitz smoothness constant estimated between $x^{t}$ and $x^{t+i}$.
 Crucially, if the number of cuts exceeds the number of smooth pieces, the relevant Lipschitz dependence is determined by the worst constant among the individual pieces encountered, rather than the potentially much larger empirical Lipschitz smoothness constant across all iterates. This structural advantage explains the improved stability and convergence behavior observed for the BL method in $\pws$ optimization.



Building on the foundation of the BL method, Lan~\cite{lan2015bundle} introduced the Accelerated Bundle-Level (ABL) method, which enhances BL by incorporating Nesterov's acceleration technique.
The core
iteration includes three steps:
\begin{equation}
\begin{split}
\underline{x}^{t} & =\brbra{1-\alpha_{t}}\bar{x}^{t-1}+\alpha_{t}x^{t-1}\\
x^{t} & =\argmin_{x\in X}\norm{x-x^{t-1}}^{2}\ ,\ \st\ \ell_{f}\brbra{x;\underline{x}^{i}}\leq\tilde{l}\ ,\ i\in\bsbra{t}\ ,\\
\bar{x}^{t} & =\brbra{1-\alpha_{t}}\bar{x}^{t-1}+\alpha_{t}x^{t}
\end{split}\tag{ABL}
\end{equation}
A distinctive aspect of Nesterov-style acceleration is that the first-order model is evaluated at the weighted-average point $\underline{x}^t$, rather than merely at the most recent iterate. This strategy enables the algorithm to effectively incorporate momentum and leads to accelerated convergence rates.
ABL is uniformly optimal for smooth, weakly smooth, and nonsmooth functions. However,
one critical issue is that, as the algorithm proceeds, the number of cuts $\bundleSize=t$
increases, which makes the subproblem increasingly difficult to solve.
To address this issue, Lan~\cite{lan2015bundle} presents APL, which projects from
a fixed reference point $\bar{y}$ instead of the current point $x^{t-1}$ and introduces
an additional linear constraint (from~\cite{ben2001lectures,ben2005non,kiwiel1995proximal}) to avoid keeping all the cuts.
Specifically, it changes the second step of ABL to:
\[
\begin{split}
x^{t} =\argmin_{x\in X}\norm{x-\bar{y}}^{2}\ ,\ \st\  \inner{x-x^{t-1}}{x^{t-1}-\bar{y}}\geq0 \ ,\ \ell_{f}\brbra{x;\underline{x}^{i}}\leq\tilde{l}\ ,\ i\in\bsbra{t-\bundleSize,t-1}\ .
\end{split}
\]
This additional linear constraint is introduced to prevent the number of cutting planes from growing too large.
A key aspect of the convergence analysis for both ABL and APL lies in their reliance on a local Lipschitz-smoothness condition between adjacent points~\cite{lan2015bundle}, specifically,
\begin{equation}
\begin{aligned}
    f(\bar{x}^{t}) \leq & \ell_f(\bar{x}^t;\underline{x}^t) + {\hat{L}_{t}}\norm{\bar{x}^{t} - \underline{x}^t }^2/2\\
    \leq & \alpha_t f^* +(1-\alpha_t)f(\bar{x}^{t-1}) + {\hat{L}_{t}}\alpha_t^2\brbra{\norm{x^{t}-\bar{y}}^2-\norm{x^{t-1}-\bar{y}}^2}/2
\end{aligned}
\end{equation}
where $\hat{L}_{t}$ is the empirical Lipschitz smoothness constant determined between $\bar{x}^{t}$ and $\underline{x}^t$.
A key structural relationship, $\bar{x}^{t} - \underline{x}^t = \alpha_t (x^t - x^{t-1})$, forms the backbone of the telescoping arguments in the convergence proofs of ABL and APL. However, this identity is valid only for adjacent pairs of iterates and fails to generalize to non-adjacent sequences, such as those arising in the BL method discussed in~\eqref{eq:BL_convergence}. This limitation becomes especially pronounced when analyzing PWS functions, where telescoping across non-adjacent points is required.
\begin{algorithm}
\caption{$\protect\onestep\protect\brbra{\bar{y},\hat{x}^{t},x^{t,0},\tilde{l},\bundleSize}$}
\label{alg:One-step}
\begin{algorithmic}[1]
\State \textbf{Initialize:} $\flag=\false,x^{t+1,0}=x^{t,0},\hat{x}^{t+1} = \hat{x}^{t}$, $\bar{X}(t,0)=\bcbra{x\in X:\inner{x-x^{t,0}}{x^{t,0}-\bar{y}}\geq0}$
\For{$i=1,\ldots,\bundleSize$}
    \State $\underline{x}^{t,i}=\brbra{1-\alpha_{t}}\hat{x}^{t}+\alpha_{t}x^{t,i-1}$
    \State Set $\underline{X}(t,i)=\bcbra{x\in X:\ell_{f}\brbra{x;\underline{x}^{t,i}}\leq\tilde{l}}$
    \State Set $X(t,i):=\bcbra{\cap_{j\in\bsbra{1,i}}\underline{X}(t,j)}\cap\bar{X}\brbra{t,0}$
    \State Compute
    \begin{equation}
    x^{t,i}\leftarrow\argmin_{x\in X(t,i)}\norm{x-\bar{y}}^{2}\label{eq:subproblems}
    \end{equation}
    \If{the subproblem is infeasible}
        \State \label{enu:infeasible} \textbf{return} $\brbra{\hat{x}^{t+1},x^{t+1,0},\true,\infty,0}$
    \EndIf
    \State Compute $\bar{x}^{t,i}=\brbra{1-\alpha_{t}}\hat{x}^{t}+\alpha_{t}x^{t,i}$
    \State Choose $\hat{x}^{t+1}$ such that $f\brbra{\hat{x}^{t+1}}=\min\bcbra{f\brbra{\hat{x}^{t}},\min_{j\leq i}\bcbra{f\brbra{\bar{x}^{t,j}}}}$ and $x^{t+1,0}=x^{t,i}$
\EndFor
\State \label{enu:take_mini} Compute:
\begin{equation*}
\brbra{r_{t},l_{t}}=\argmin_{0\leq l<r\leq \bundleSize}L_{t}\brbra{r,l},
\end{equation*}
where $L_{t}\brbra{r,l}=\brbra{f(\hat{x}^{t+1}) - \tilde{l} - (1-\frac{3}{4}\alpha_t)(f(\hat{x}^{t}) - \tilde{l})}\big/\brbra{{\alpha_t^2}\|x^{t,r} - x^{t,l}\|^2/2}$ \Comment{Just for analysis}
\State \textbf{return} $\brbra{\hat{x}^{t+1},x^{t+1,0},\flag,L_{t}\brbra{r_{t},l_{t}},\norm{x^{t,r_{t}}-x^{t,l_{t}}}}$\label{enu:return}
\end{algorithmic}
\end{algorithm}
\section{\protect\label{sec:Accelerated-Prox-level-method}Accelerated Prox-level
method for piecewise smooth optimization}

We propose $\apex$ (\uline{A}ccelerated \uline{P}rox-level method for
\uline{Ex}ploring Piecewise Smooth), an accelerated method for convex {\pws}
optimization. Building on the APL framework~\cite{lan2015bundle}, $\apex$
uses a double-loop design: each outer iteration calls the subroutine \onestep
(Algorithm~\ref{alg:One-step}), which performs $\bundleSize$ inner projections.
Similar to APL, Algorithm~\ref{alg:One-step} computes gradients at the averaged points $\underline{x}^{t,i}$ and projects from a fixed reference point $\bar{y}$ onto the level set $X(t,i)$. Additionally, we ensure that the selected solution $\hat{x}^{t + 1}$ forms a non-increasing sequence of objective values across inner iterations.
A central distinction from APL is that $\apex$ updates the weighting factor $\alpha_t$ only in the outer loop, keeping the averaged point $\hat{x}^t$ fixed throughout each outer iteration. This approach enables piecewise-adaptive progress toward the target level while retaining the benefits of acceleration. Furthermore, $\apex$ incorporates quantities used for certificate generation and empirical Lipschitz estimate, features that are particularly valuable for developing parameter-free algorithms, as discussed in subsequent sections.

\begin{algorithm}
\caption{\uline{A}ccelerated \uline{P}rox-level Method for \uline{Ex}ploring Piecewise Smoothness ($\protect\apex$)}
\label{alg:apex}
\begin{algorithmic}[1]
\Require candidate solution $\bar{y}$, test level $\tilde{l}$, and the number of cuts $\bundleSize$
\State \textbf{Initialize:} $x^{1}=\bar{y},\hat{x}^{1}=\bar{y}$
\For{$t=1,\ldots,N$}
    \State $\brbra{\hat{x}^{t+1},x^{t+1},\ldots}\leftarrow\onestep\brbra{\bar{y},\hat{x}^{t},x^{t},\tilde{l},\bundleSize}$
\EndFor
\end{algorithmic}
\end{algorithm}

In contrast to existing analyses of APL~\cite{lan2015bundle}, which
depend on the Lipschitz continuity of the objective function across
successive iterates,
our proposed mechanism allows the derivation of optimality conditions
across non-adjacent iterates. This flexibility enables the
convergence proof to bypass certain intermediate points, leading to
tighter theoretical guarantees with improved dependence on the Lipschitz
constant.
We first define two Lipschitz constants:
\begin{equation}
    L_t = \min_{0\leq l<r\leq \bundleSize}L_t(r,l)\ \ \text{and}\ \ \bar{L}\brbra N=\frac{\sum_{t=1}^{N}\omega_{t}\alpha_{t}^{2}L_{t}\onebf_{\mcal E(t)}\norm{x^{t,r_{t}}-x^{t,l_{t}}}^{2}/2}{\sum_{t=1}^{N}\norm{x^{t,r_{t}}-x^{t,l_{t}}}^{2}}\ .\label{eq:L_N}
\end{equation}
Here, $L_t(r,l)$ is defined in Line~\ref{enu:take_mini} of Algorithm~\ref{alg:One-step} and event $\mcal E(t)$ is defined as follows:
\begin{equation}\label{eq:event_E_t}
\mcal E(t)=\bcbra{f\brbra{\hat{x}^{t+1}}-\tilde{l}>\brbra{1-\frac{\alpha_t}{2}}\brbra{f(\hat{x}^{t})-\tilde{l}}}.
\end{equation}
Furthermore, we adopt the standard convention that $0/0 = 0$.  {The event \(\mathcal E(t)\) marks the outer iterations in which the residual
\(f(\hat{x}^t)-\tilde{l}\) fails to contract at the benchmark rate
\(1-\alpha_t/2\). These are precisely the slow-descent iterations for which the proof must invoke an additional curvature correction, measured by \(L_t\).
When \(\mathcal E(t)\) does not occur, the benchmark contraction already holds, so the indicator $\onebf_{\mathcal E(t)}$ switches off that correction.
Consequently,
$\bar L(N)$ can be viewed as a weighted average of the effective curvature terms $(\omega_t\alpha_t^2/2)L_t$ over the outer iterations for which $\mcal E(t)$ occurs, where the averaging weights are proportional to $\|x^{t,r_t}-x^{t,l_t}\|^2$; if no such iteration occurs, we set $\bar L(N)=0$.}

We now state the following proposition, which formalizes the
convergence properties of Algorithm~\ref{alg:apex}.
\begin{proposition}
\label{prop:converge}
Suppose the sequence $\bcbra{\hat{x}^{t}}$ generated by Algorithm~\ref{alg:apex}
satisfies $f\brbra{\hat{x}^{t}}\geq\tilde{l}$ for all $t\geq1$ and
that every subproblem~\eqref{eq:subproblems} of Algorithm~\ref{alg:apex}
is feasible. Assume the weights $\bcbra{\omega_{t}}$
satisfy $\omega_{t}\brbra{1-\alpha_{t}/2}=\omega_{t-1}$ for $t\geq2$.
Then the following properties hold:
\begin{enumerate}
    \item \textbf{Monotonicity of proximal distances:} For every $t \geq 1$ and $0 \leq i < \bundleSize$, it holds that $\|x^{t,i} - \bar{y}\| \leq \|x^{t,i+1} - \bar{y}\|$.
    \item \textbf{Convergence bound:} The quantity $\bar{L}(t)$ (defined in~\eqref{eq:L_N}) satisfies the following inequality: 
    \begin{equation}
    \omega_t\left(f(\hat{x}^{t + 1}) - \tilde{l}\right) - \omega_1\left(1 - \frac{\alpha_1}{2}\right)\left(f(\hat{x}^{1}) - \tilde{l}\right) \leq \bar{L}(t)\|x^{t + 1} - \bar{y}\|^2\ ,\ \forall t\geq 2\ .
    \label{eq:upper_bound_con}
    \end{equation}
\end{enumerate}
\end{proposition}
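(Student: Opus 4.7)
The plan is to prove both parts by combining the nested structure of the projection subproblems~\eqref{eq:subproblems} with the weight recursion $\omega_t(1-\alpha_t/2)=\omega_{t-1}$.

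For Part 1, I observe that adding the extra cut $\underline{X}(t,i+1)$ yields $X(t,i+1)\subseteq X(t,i)$, so $x^{t,i+1}$ projects $\bar{y}$ onto a smaller convex set than $x^{t,i}$ does, forcing $\|x^{t,i+1}-\bar{y}\|\geq\|x^{t,i}-\bar{y}\|$. Coupled with the algorithm's identity $x^{t+1,0}=x^{t,m}$, this makes the entire doubly-indexed sequence $\|x^{t,i}-\bar{y}\|$ non-decreasing across both indices, a fact I will reuse in Part 2.

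Part 2 proceeds in three stages. (A) \emph{One-step inequality.} Using the defining equation $f(\hat{x}^t)-\tilde l = (1-\tfrac{3}{4}\alpha_t)[f(\hat{x}^{t-1})-\tilde l]+\tfrac12\alpha_t^2 L_t\|x^{t,r_t}-x^{t,l_t}\|^2$, multiplying by $\omega_t$, and subtracting $\omega_{t-1}[f(\hat{x}^{t-1})-\tilde l]=\omega_t(1-\alpha_t/2)[f(\hat{x}^{t-1})-\tilde l]$, I isolate a nonpositive term $-\omega_t(\alpha_t/4)[f(\hat{x}^{t-1})-\tilde l]$ (nonpositive since $f(\hat{x}^{t-1})\geq\tilde l$) and discard it to obtain
\[
\omega_t[f(\hat{x}^t)-\tilde l]-\omega_{t-1}[f(\hat{x}^{t-1})-\tilde l]\leq\tfrac12\omega_t\alpha_t^2 L_t\|x^{t,r_t}-x^{t,l_t}\|^2\cdot\onebf_{\mcal E(t)}.
\]
Inserting $\onebf_{\mcal E(t)}$ is legitimate because on $\mcal E(t)^c$ the left-hand side is already $\leq 0$ by the very definition of $\mcal E(t)$, and on $\mcal E(t)$ the strict inequality $f(\hat{x}^t)-\tilde l>(1-\alpha_t/2)[f(\hat{x}^{t-1})-\tilde l]\geq(1-3\alpha_t/4)[f(\hat{x}^{t-1})-\tilde l]$ forces $L_t>0$. (B) \emph{Telescoping.} Summing from $1$ to $t$ and invoking the definition of $\bar L(t)$ in~\eqref{eq:L_N} rewrites the right-hand side as $\bar L(t)\sum_{s=1}^t\onebf_{\mcal E(s)}\|x^{s,r_s}-x^{s,l_s}\|^2$.

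The main obstacle is (C), the \emph{geometric bound} $\sum_{s=1}^t\|x^{s,r_s}-x^{s,l_s}\|^2\leq\|x^t-\bar{y}\|^2$. Since $X(s,r_s)\subseteq X(s,l_s)$ and $x^{s,l_s}$ is the projection of $\bar{y}$ onto $X(s,l_s)$, the first-order optimality condition gives $\inner{x^{s,r_s}-x^{s,l_s}}{\bar{y}-x^{s,l_s}}\leq 0$; expanding $\|x^{s,r_s}-\bar{y}\|^2=\|(x^{s,r_s}-x^{s,l_s})+(x^{s,l_s}-\bar{y})\|^2$ then yields the Pythagorean-type estimate $\|x^{s,r_s}-x^{s,l_s}\|^2\leq\|x^{s,r_s}-\bar{y}\|^2-\|x^{s,l_s}-\bar{y}\|^2$. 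Using Part 1 to upper bound $\|x^{s,r_s}-\bar{y}\|^2\leq\|x^{s,m}-\bar{y}\|^2$ and lower bound $\|x^{s,l_s}-\bar{y}\|^2\geq\|x^{s,0}-\bar{y}\|^2$, and invoking the chaining identity $x^{s,0}=x^{s-1,m}$ for $s\geq 2$ together with $x^{1,0}=\bar{y}$, the resulting sum telescopes cleanly to $\|x^{t,m}-\bar{y}\|^2=\|x^t-\bar{y}\|^2$. Combining Stages (A)--(C) yields~\eqref{eq:upper_bound_con}.
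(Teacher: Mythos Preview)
Your overall strategy matches the paper's proof: Part 1 via nested projection sets, Part 2 via a one-step weighted recursion combined with a Pythagorean telescoping bound on $\sum_s\|x^{s,r_s}-x^{s,l_s}\|^2$. However, there is a gap in how you handle the boundary case $i=0$ (equivalently $l_s=0$). Your nesting argument ``adding $\underline{X}(t,i+1)$ yields $X(t,i+1)\subseteq X(t,i)$'' only makes sense for $i\geq 1$; there is no set $X(t,0)$ in the algorithm, and $x^{t,0}$ is \emph{not} a projection of $\bar{y}$ onto anything in outer iteration $t$---it is simply the inherited point $x^{t-1,m}$. Consequently your justification for $\|x^{t,0}-\bar{y}\|\leq\|x^{t,1}-\bar{y}\|$ in Part~1, and your premise ``$x^{s,l_s}$ is the projection of $\bar{y}$ onto $X(s,l_s)$'' in Stage~(C) when $l_s=0$, both fail as written.

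The missing ingredient is precisely the half-space constraint $\bar{X}(t,0)=\{x:\inner{x-x^{t,0}}{x^{t,0}-\bar{y}}\geq 0\}$, which you never invoke. Since $x^{t,i}\in X(t,i)\subseteq\bar{X}(t,0)$ for every $i\geq 1$, one has $\inner{x^{t,i}-x^{t,0}}{x^{t,0}-\bar{y}}\geq 0$, and expanding $\|x^{t,i}-\bar{y}\|^2$ then gives $\|x^{t,i}-\bar{y}\|^2\geq\|x^{t,0}-\bar{y}\|^2+\|x^{t,i}-x^{t,0}\|^2$. This is exactly the three-point inequality you need at $l_s=0$, obtained not from projection optimality but directly from the algorithm's extra linear cut (which exists for precisely this purpose). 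Once this is patched, your proof is correct and essentially identical to the paper's.
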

\noindent\emph{Proof deferred.} The detailed proof is collected in Subsection~\ref{subsec:apex-detailed-proofs}.

Substituting $\tilde{l}=f^{*}$ into the bound of Proposition~\ref{prop:converge},
we obtain the following sublinear convergence bound.
\begin{thm}
\label{thm:f_star}
Set the weight sequence $\omega_{t}=\brbra{t+2}\brbra{t+3}/2$ and
$\alpha_{t}=4/\brbra{t+3}$ in $\apex$. Then for the sequence $\bcbra{\hat{x}^{t}}$
generated by $\apex$ with level $\tilde{l}=f^{*}$, the quantity $\bar{L}\brbra t$  (defined in~\eqref{eq:L_N}) satisfies
\[
f\brbra{\hat{x}^{t+1}}-f^{*}\leq\frac{6}{\brbra{t+2}\brbra{t+3}}\brbra{f\brbra{\hat{x}^{1}}-f^{*}}+\frac{2\bar{L}\brbra t}{\brbra{t+2}\brbra{t+3}}\dist^{2}\brbra{\bar{y}, X^*}\ .
\]
\end{thm}
\noindent\emph{Proof deferred.} The detailed proof is collected in Subsection~\ref{subsec:apex-detailed-proofs}.

We make several remarks regarding the result in Proposition~\ref{prop:converge} and Theorem~\ref{thm:f_star}.

First, Algorithm~\ref{alg:apex} is a double-loop method. By updating the weighting parameter
$\alpha_{t}$ and the reference iterate $\hat{x}^{t}$ only once per
outer iteration, the convergence analysis can be constructed on arbitrary pairs
of inner-loop iterates, rather than only successive ones. 
Specifically, Proposition~\ref{prop:converge} and Theorem~\ref{thm:f_star} express the complexity through $\bar{L}(t)$, defined in~\eqref{eq:L_N}, a weighted average of effective curvature terms built from the minimum over inner-iterate pairs in each outer iteration.
This flexibility enables the algorithm to exploit the $\pws$ structure.

Second, within each inner iteration, a cut-generation scheme is defined
as
\begin{equation}
\text{Option I}:\ \ X(t,i):=\bcbra{\cap_{j\in\bsbra{1,i}}\underline{X}(t,j)}\cap\bar{X}\brbra{t,0}\ .
\end{equation}
In this scheme, the algorithm starts with two initial cuts; at each
of the $\bundleSize$ inner steps, it appends one new cutting plane derived
from the updated point $\underline{x}^{t,i}$. A major limitation
of this strategy is that all cuts from the previous iteration must
be discarded when entering a new outer iteration. Alternatively, one
may consider
\begin{equation}
\text{Option II}:\ \ X\brbra{t,i}:=\bcbra{\cap_{j\in\bsbra{1,i}}\underline{X}(t,j)}\cap\bar{X}\brbra{t-1,0}\cap\bcbra{\cap_{j\in\bsbra{1,\bundleSize}}\underline{X}(t-1,j)}
\end{equation}
Here, we replace $\bar{X}\brbra{t,0}$ with $\bar{X}\brbra{t-1,0}\cap\bcbra{\cap_{j\in\bsbra{1,\bundleSize}}\underline{X}(t-1,j)}$.
This modification preserves all properties established in Proposition~\ref{prop:converge}
and does not affect the convergence result. The advantage is that
cuts from the most recent outer iteration are retained. However, if the
number of cuts is fixed, the number of cuts in constraints $\bcbra{\cap_{j\in\bsbra{1,i}}\underline{X}(t,j)}$
will be smaller than that in the Option I scheme. Consequently, the
Option II approach may reduce the algorithm's ability
to capture the $\pws$ structure.

Third, Proposition~\ref{prop:converge} contains two important properties.
The first is that the sequence $\bcbra{\norm{x^{t,i}-\bar{y}}^{2}}$ is non-decreasing,
which helps in constructing a lower bound on $f^{*}$ in the following
sections. The second property implies that if the domain $X$ is bounded,
i.e., there exists a constant $D_{X}>0$ such that $D_{X}=\max_{x_{1},x_{2}\in X}\norm{x_{1}-x_{2}}$,
then we have
\begin{equation}
\omega_{N}\Brbra{f\brbra{\hat{x}^{N+1}}-\tilde{l}}-\omega_{1}\brbra{1-\frac{\alpha_{1}}{2}}\brbra{f\brbra{\hat{x}^{1}}-\tilde{l}}\leq\bar{L}\brbra ND_{X}^{2}\ ,\label{eq:Dx_converge}
\end{equation}
which implies $f\brbra{\hat{x}^{N}}-\tilde{l}=\mcal O\brbra{\frac{\bar{L}\brbra ND_{X}^{2}}{\omega_{N}}}$
and this is also the core idea behind Lan's proof in~\cite{lan2015bundle}.

Fourth, in many applications one may choose the target level $\tilde{l}$
to be the true optimum $f^{*}$; this is exactly the strategy underlying
Polyak  step size~\cite{polyak1969minimization,polyak1987introduction}
and Polyak Minorant Method~\cite{devanathan2023polyak}. Furthermore, Dang et al.~\cite{dang2017linearly} have applied this idea to semidefinite programming.

A remaining question is that, even though $\bar{L}\brbra t$ can be computed in
practice, it is still unclear how to bound the quantity  $\bar{L}\brbra t$
as it appears in Proposition~\ref{prop:converge} and Theorem~\ref{thm:f_star}.
In the following Proposition~\ref{prop:upper_bound_LbarN}, we show that
value $\bar{L}\brbra t$ can be upper bounded by a weighted average
of empirical Lipschitz smoothness constants $\tilde{L}_{t}$, where $\tilde{L}_{t}$
is the smallest local Lipschitz constant between $\bar{x}^{t,r}$ and $\underline{x}^{t,l+1}$
with $r>l$ in $t$-th outer iteration.
\begin{proposition}
\label{prop:upper_bound_LbarN}Let $\bcbra{\hat{x}^{t},x^{t,i}}$ be
the iterates generated by Algorithm~\ref{alg:apex}, and 
for each $\onestep$ with $t\geq 1$,  define $\tilde{L}_t$ by 
\begin{equation}\label{eq:defn_tilde_L_t}
\tilde{L}_{t}:=\min_{0\leq l_{t}<r_{t}\leq B}\frac{2\brbra{\max\bcbra{f\brbra{\bar{x}^{t,r_{t}}}-\ell_{f}\brbra{\bar{x}^{t,r_{t}},\underline{x}^{t,l_{t}+1}}-\frac{\alpha_{t}}{4}\brbra{f\brbra{\hat{x}^{t}}-\tilde{l}},0}}}{\norm{\bar{x}^{t,r_{t}}-\underline{x}^{t,l_{t}+1}}^{2}}\ ,
\end{equation}
where all quotients in this proposition use the convention \(0/0=0\).
Then $L_{t}\leq\tilde{L}_{t}$ ($L_{t}$ is defined in~\eqref{eq:L_N}).
Furthermore, set the weighted sequence $\omega_{t}={(t+2)(t+3)/2}$ and $\alpha_{t}=4/\brbra{t+3}$, and suppose $f$ is $(k,L)$-$\pws$ and the selected bundle size $B\geq k$, then 
 $\bar{L}\brbra N\leq O(1)L$. 
\end{proposition}
\noindent\emph{Proof deferred.} The detailed proof is collected in Subsection~\ref{subsec:apex-detailed-proofs}.

We make a few remarks regarding this result. 
First, if the objective $f$ is globally $L$-smooth, then each local
surrogate $\tilde{L}_{t}$ is uniformly bounded above by $L$, and hence $\bar{L}\brbra N \leq O(1)L$.
The same conclusion holds more generally when $f$ is $\brbra{k,L}$-$\pws$ with at most $k$
smooth pieces, provided that $\bundleSize\geq k$ cuts are maintained in each outer iteration.
If instead $\bundleSize<k$, then under $M$-Lipschitz continuity of $f$ we use the following upper bound:
\[
\begin{split}\tilde{L}_{t}= & 2\brbra{f\brbra{\bar{x}^{t,r_{t}}}-\ell_{f}\brbra{\bar{x}^{t,r_{t}},\underline{x}^{t,l_{t}+1}}-\frac{\alpha_{t}}{4}\brbra{f\brbra{\hat{x}^{t}}-\tilde{l}}}\Big/\norm{\bar{x}^{t,r_{t}}-\underline{x}^{t,l_{t}+1}}^{2}\\
\leq & 2\brbra{M\norm{\bar{x}^{t,r_{t}}-\underline{x}^{t,l_{t}+1}}-\frac{\alpha_{t}}{4}\brbra{f\brbra{\hat{x}^{t}}-\tilde{l}}}\Big/\norm{\bar{x}^{t,r_{t}}-\underline{x}^{t,l_{t}+1}}^{2}\\
\aleq & 2\brbra{\frac{M^{2}\norm{\bar{x}^{t,r_{t}}-\underline{x}^{t,l_{t}+1}}^{2}}{\alpha_{t}\brbra{f\brbra{\hat{x}^{t}}-\tilde{l}}}}\Big/\norm{\bar{x}^{t,r_{t}}-\underline{x}^{t,l_{t}+1}}^{2}=\frac{2M^{2}}{\alpha_{t}\brbra{f\brbra{\hat{x}^{t}}-\tilde{l}}}
\end{split}
\ ,
\]
where $(a)$ holds by $\tau s\leq\frac{1}{2}\tau^{2}+\frac{1}{2}s^{2}$
by taking $s^2=\alpha_{t}\brbra{f\brbra{\hat{x}^{t}}-\tilde{l}}/2$
and $\tau=M\norm{\bar{x}^{t,r_{t}}-\underline{x}^{t,l_{t}+1}}\big/s$.
Consequently, although Algorithm~\ref{alg:apex} is tailored to {\pws}
problems, its analysis readily extends to general Lipschitz-continuous
objectives by substituting the bound $\tilde{L}_{t}\leq2M^{2}\alpha_{t}^{-1}\big/\brbra{f\brbra{\hat{x}^{t}}-\tilde{l}}$
into the harmonic-mean estimate. 
Second, $\bar{L}\brbra N$ is a weighted harmonic mean of local smoothness surrogates along the generated iterates. Thus, controlling $\bar{L}\brbra N$ does not require $\bundleSize$ to exceed the total number of pieces in the PWS objective; a sufficient condition for $\bar{L}\brbra N\leq O(1)L$ is that $\bundleSize$ covers the distinct pieces actually encountered along the iterates. This is also reflected in the experiments in Section~\ref{sec:Numerical-Study}, where we show that $\apex$ can achieve fast convergence even with $\bundleSize < k$.

To clarify the main idea behind our practical restarting procedure and the Lipschitz constant dependence, we first present the idealized restarting scheme for $\apex$ in Algorithm~\ref{alg:ideal_apex}, which isolates the core restarting mechanism used later in the practical algorithm.
\begin{algorithm}
\caption{Idealized $\apex$}
\label{alg:ideal_apex}
\begin{algorithmic}[1]
\Require initial candidate solution $\bar{y}^{1}$, optimal value $f^*$, number of cuts $\bundleSize$, shrink factor $\theta\in(0,1)$
\For{$s=1,\ldots,S$}
    \State Run $(\hat{x}^{t_s + 1}_{s},\ldots)\leftarrow \apex(f,\bar{y}^{s},f^*,\bundleSize)$ until
    $
    f(\hat{x}^{t_s + 1}_{s})-f^*
    \le \theta\brbra{f(\bar{y}^{s})-f^*}
    $
    \State Set $\bar{y}^{s + 1}=\hat{x}^{t_s + 1}_{s}$
\EndFor
\end{algorithmic}
\end{algorithm}

\begin{thm}\label{thm:complexity_idealized_apex}
Suppose that the assumptions of Theorem~\ref{thm:f_star} hold and that $f$ satisfies the $\mu^*$-QG condition.
For the $s$-th stage of Algorithm~\ref{alg:ideal_apex}, let $\bar{L}_{s}(t)$ be the stagewise counterpart of $\bar{L}(t)$ in Theorem~\ref{thm:f_star}. Let $\tilde{L}_{s,t}$ be the corresponding quantity $\tilde{L}_{t}$ in Proposition~\ref{prop:upper_bound_LbarN}, evaluated at the $t$-th iteration of $\apex$ with $\tilde{l}=f^*$, and let $\mcal E_s(t)$ be the corresponding event $\mcal E(t)$.
For any target accuracy $\vep\in\brbra{0,f\brbra{\bar y^{1}}-f^*}$, if Algorithm~\ref{alg:ideal_apex} stops after the first stage $S$ for which
$f(\bar y^{S+1})-f^*\leq\vep$, then the total number of gradient evaluations is at most
\[
O(1)\bundleSize\max\bcbra{\sqrt{\frac{{L}_{\text{avg}}}{\mu^*}}, 1}\,
\log\brbra{\frac{f\brbra{\bar y^{1}}-f^*}{\vep}},
\]
where ${L}_{\text{avg}}=\max_{s=1,\ldots,S}\bar{L}_{s}(t_s - 1)$ and we use the convention $\bar{L}_s(0) = 0$.
Moreover, for each $s=1,\ldots,S$ such that $t_s>1$,
\begin{equation}\label{eq:upper_L_s_t_s}
\bar{L}_{s}\brbra{t_s -1 }
\leq \frac{12}{\theta}\cdot
\frac{\sum_{t=1}^{t_{s}-1}\onebf_{\mcal E_{s}(t)}(t+3)}
{\sum_{t=1}^{t_{s}-1}\brbra{\onebf_{\mcal E_{s}(t)}(t+3)/\tilde{L}_{s,t}}}\ .
\end{equation}
Here, we adopt the standard convention that $0/0=0$.
\end{thm}

\noindent\emph{Proof deferred.} The detailed proof is collected in Subsection~\ref{subsec:apex-detailed-proofs}.

Two remarks are given with respect to result of Theorem~\ref{thm:complexity_idealized_apex}.  First,~\eqref{eq:upper_L_s_t_s} bounds each $\bar L_s(t_s)$ by a weighted harmonic average of the local surrogates $\tilde L_{s,t}$ over the indices where $\mcal E_s(t)$ occurs; because the weights are $t+3$, later such indices carry more weight. Second, the weighted harmonic average is insensitive to the presence of a few large values. Hence, the bound in~\eqref{eq:upper_L_s_t_s} is robust to occasional large local surrogates $\tilde{L}_{s,t}$.

The next proposition lower bounds, in each stage, the fraction of indices for which $\mcal E_s(t)$ occurs; consequently, the weighted harmonic average incorporates a major subset of the local Lipschitz constants.
\begin{proposition}\label{prop:lower_bound_event_time_ratio}
For any stage $s$ of Algorithm~\ref{alg:ideal_apex} with $t_{s}>1$, we have
\begin{equation}
\frac{\sum_{t=1}^{t_{s}}\onebf_{\mcal E_{s}\brbra t}}{t_{s}}\geq\frac{1}{t_{s}}\max\left\{\left\lfloor \frac{-1+\sqrt{1+4\theta\brbra{t_{s}+2}\brbra{t_{s}+3}}}{2}\right\rfloor-1,\ 0\right\}\ .
\end{equation}
\end{proposition}
\noindent\emph{Proof deferred.} The detailed proof is collected in Subsection~\ref{subsec:apex-detailed-proofs}.

Proposition~\ref{prop:lower_bound_event_time_ratio} shows that $\mcal E_s(t)$ occurs on a nontrivial portion of a long stage: the lower bound approaches $\sqrt{\theta}$ as $t_s\to\infty$. Thus, when $\theta=1/2$, roughly $70.7\%$ of a long stage contributes local surrogates to the weighted harmonic-mean bound in~\eqref{eq:upper_L_s_t_s}.


\subsection{Detailed proofs\label{subsec:apex-detailed-proofs}}

The statements above isolate the algorithmic consequences of the APEX construction. We now collect the full proofs in the same order as the corresponding results.

\begin{proof}[\underline{Proof of Proposition~\ref{prop:converge}}]
Part 1 (Monotonicity). We first establish that the distances $\norm{x^{t,i}-\bar{y}}$
are nondecreasing across the inner-loop updates. For each $1\leq i\leq \bundleSize$,
recall that $x^{t,i}\in\argmin_{x\in X(t,i)}\norm{x-\bar{y}}^{2}$.
By the three-point property for Euclidean projections, this implies
\[
\norm{x^{t,i}-\bar{y}}^{2}+\norm{x^{t,i}-x}^{2}\leq\norm{x-\bar{y}}^{2}\ ,\ \forall x\in X(t,i)\ .
\]
Because $x^{t,i+1}\in X(t,i+1)\subseteq X(t,i)$ and $x^{t,i+1}\in X\brbra{t,i+1}$,
substituting $x=x^{t,i+1}$ gives
\[
\norm{x^{t,i}-\bar{y}}^{2}+\norm{x^{t,i}-x^{t,i+1}}^{2}\leq\norm{x^{t,i+1}-\bar{y}}^{2}\ ,
\]
which implies $\norm{x^{t,i}-\bar{y}}\leq\norm{x^{t,i+1}-\bar{y}}$.

For the initial case $i=0$, we expand
\[
\begin{split}\norm{x^{t,1}-\bar{y}}^{2} & =\norm{x^{t,1}-x^{t,0}}^{2}+2\inner{x^{t,1}-x^{t,0}}{x^{t,0}-\bar{y}}+\norm{x^{t,0}-\bar{y}}^{2}\end{split}
\ .
\]
The linear constraint in $X(t,0)$ ensures the cross term is nonnegative,
so $\norm{x^{t,0}-\bar{y}}^{2}\leq\norm{x^{t,1}-\bar{y}}^{2}$. Hence
$\norm{x^{t,i}-\bar{y}}\leq\norm{x^{t,i+1}-\bar{y}}$, which completes
the proof of monotonicity.

Part 2 (Convergence bound). We now turn to the function-value recursion.


For each outer iteration $t$, let $\brbra{l_{t}, r_{t}}$ be a minimizing pair in the definition of $\tilde{L}_{t}$, where $0\leq l_{t}<r_{t}\leq \bundleSize$.
By the projection optimality condition for $x^{t,l_{t}}$, we
have
\[
\norm{x^{t,l_{t}}-\bar{y}}^{2}+\norm{x^{t,l_{t}}-x}^{2}\leq\norm{x-\bar{y}}^{2}\ ,\ \forall x\in X\brbra{t,l_{t}}\ .
\]
By construction,  $X(t,r_{t})\subseteq X(t,l_{t})$. Hence, taking $x=x^{t,r_{t}}\in X(t,r_{t})\subseteq X(t,l_{t})$
in above gives:
\begin{equation}
\norm{x^{t,l_{t}}-\bar{y}}^{2}+\norm{x^{t,l_{t}}-x^{t,r_{t}}}^{2}\leq\norm{x^{t,r_{t}}-\bar{y}}^{2}\ .\label{eq:away_bridged_three_point}
\end{equation}

Next, we give the descent analysis of the sequence based on~\eqref{eq:away_bridged_three_point}.
It follows from the non-decreasing property in Part~1 and~\eqref{eq:away_bridged_three_point}
that
\[
\norm{x^{t,0}-\bar{y}}^{2}+\norm{x^{t,l_{t}}-x^{t,r_{t}}}^{2}\leq\norm{x^{t,\bundleSize}-\bar{y}}^{2}\ .
\]
Summing from $t=1$ to $N$ gives
\begin{equation}
\sum_{t=1}^{N}\norm{x^{t,l_{t}}-x^{t,r_{t}}}^{2}\aeq\norm{x^{1,0}-\bar{y}}^{2}+\sum_{t=1}^{N}\norm{x^{t,l_{t}}-x^{t,r_{t}}}^{2}\leq\norm{x^{N,\bundleSize}-\bar{y}}^{2}\ ,\label{eq:sum_bounded}
\end{equation}
where $(a)$ follows because $x^{1,0}=\bar{y}$. From the definition
of $L_{t}$, we obtain
\[
\begin{split}f\brbra{\hat{x}^{t + 1}}-\tilde{l} & \leq\brbra{1-\frac{3}{4}\alpha_{t}}\brbra{f\brbra{\hat{x}^{t}}-\tilde{l}}+\frac{L_{t}\alpha_{t}^{2}}{2}\norm{x^{t,r_{t}}-x^{t,l_{t}}}^{2}\\
 & \leq\brbra{1-\frac{\alpha_{t}}{2}}\brbra{f\brbra{\hat{x}^{t}}-\tilde{l}}+\frac{L_{t}\alpha_{t}^{2}}{2}\norm{x^{t,r_{t}}-x^{t,l_{t}}}^{2}
\end{split}
\ .
\]
Then combining the above inequality and the definition of $\onebf_{\mcal E(t)}$
gives
\[
f\brbra{\hat{x}^{t + 1}}-\tilde{l}\leq\brbra{1-\frac{\alpha_{t}}{2}}\brbra{f\brbra{\hat{x}^{t}}-\tilde{l}}+\frac{L_{t}\onebf_{\mcal E(t)}\alpha_{t}^{2}}{2}\norm{x^{t,r_{t}}-x^{t,l_{t}}}^{2}.
\]
Multiplying both sides by $\omega_{t}$ and using $\omega_{t}\brbra{1-\frac{\alpha_{t}}{2}}=\omega_{t-1}$,
we obtain
\begin{equation}
\begin{split}\omega_{t}\Brbra{f\brbra{\hat{x}^{t+1}}-\tilde{l}} & \leq\omega_{t}\brbra{1-\frac{\alpha_{t}}{2}}\brbra{f\brbra{\hat{x}^{t}}-\tilde{l}}+\frac{\omega_{t}L_{t}\onebf_{\mcal E(t)}\alpha_{t}^{2}}{2}\norm{x^{t,r_{t}}-x^{t,l_{t}}}^{2}\\
 & \aeq\omega_{t-1}\brbra{f\brbra{\hat{x}^{t}}-\tilde{l}}+\frac{\omega_{t}L_{t}\onebf_{\mcal E(t)}\alpha_{t}^{2}}{2}\norm{x^{t,r_{t}}-x^{t,l_{t}}}^{2}\ ,\\
\omega_{1}\Brbra{f\brbra{\hat{x}^{2}}-\tilde{l}} & \leq\omega_{1}\brbra{1-\frac{\alpha_{1}}{2}}\brbra{f\brbra{\hat{x}^{1}}-\tilde{l}}+\frac{\omega_{1}L_{1}\onebf_{\mcal E(1)}\alpha_{1}^{2}}{2}\norm{x^{1,r_{1}}-x^{1,l_{1}}}^{2}\ ,
\end{split}
\label{eq:recusion}
\end{equation}
where $(a)$ holds by the definition of $\omega_{t}$.
Summing~\eqref{eq:recusion} from $t=1$ to $N$ yields
\begin{align*}
\omega_{N}\Brbra{f\brbra{\hat{x}^{N+1}}-\tilde{l}}-\omega_{1}\brbra{1-\frac{\alpha_{1}}{2}}\brbra{f\brbra{\hat{x}^{1}}-\tilde{l}} & \leq\bar{L}\brbra N\sum_{t=1}^{N}\norm{x^{t,r_{t}}-x^{t,l_{t}}}^{2}\aleq\bar{L}\brbra N\norm{x^{N,\bundleSize}-\bar{y}}^{2}\ ,
\end{align*}
where $(a)$ is by~\eqref{eq:sum_bounded}. This completes the proof.

\ifdefined\isarxiv
    \end{proof}
\else
    \qedsymbol\end{proof}
\fi

\begin{proof}[\underline{Proof of Theorem~\ref{thm:f_star}}]
The choice of $\omega_{t}$ and $\alpha_{t}$ satisfy $\omega_{t}\brbra{1-\frac{\alpha_{t}}{2}}=\omega_{t-1}$.
Hence, it follows from the second result of Proposition~\ref{prop:converge}
that
\[
\omega_{t}\Brbra{f\brbra{\hat{x}^{t+1}}-f^{*}}-\omega_{1}\brbra{1-\frac{\alpha_{1}}{2}}\brbra{f\brbra{\hat{x}^{1}}-f^{*}}\leq\bar{L}\brbra t\norm{x^{t+1}-\bar{y}}^{2}\ .
\]
Note that any optimal solution $x^{*}\in X^*$ is  feasible for the
subproblem if we set the level $\tilde{l}=f^{*}$. Hence, any feasible
solution to the subproblem $x^{t+1}=x^{t,\bundleSize}$ should satisfy $\norm{x^{t+1}-\bar{y}}\leq\dist\brbra{\bar{y},X^*}$.
\ifdefined\isarxiv
    \end{proof}
\else
    \qedsymbol\end{proof}
\fi

\begin{proof}[\underline{Proof of Proposition~\ref{prop:upper_bound_LbarN}}]
It follows from $f\brbra{\hat{x}^{t+1}}\leq f\brbra{\bar{x}^{t,r_{t}}}$
and the definition of $\bar{x}^{t,r_{t}}=\brbra{1-\alpha_{t}}\hat{x}^{t}+\alpha_{t}x^{t,r_{t}}$
that
\[
\begin{split}f\brbra{\hat{x}^{t+1}} & \leq f\brbra{\bar{x}^{t,r_{t}}}\aleq\ell_{f}\brbra{\bar{x}^{t,r_{t}};\underline{x}^{t,l_{t}+1}}+\frac{\tilde{L}_{t}}{2}\norm{\bar{x}^{t,r_{t}}-\underline{x}^{t,l_{t}+1}}^{2}+\frac{\alpha_{t}}{4}\brbra{f\brbra{\hat{x}^{t}}-\tilde{l}}\\
 & \beq\brbra{1-\alpha_{t}}\ell_{f}\brbra{\hat{x}^{t};\underline{x}^{t,l_{t}+1}}+\alpha_{t}\ell_{f}\brbra{x^{t,r_{t}};\underline{x}^{t,l_{t}+1}}+\frac{\tilde{L}_{t}}{2}\norm{\bar{x}^{t,r_{t}}-\underline{x}^{t,l_{t}+1}}^{2}+\frac{\alpha_{t}}{4}\brbra{f\brbra{\hat{x}^{t}}-\tilde{l}}\\
 & \cleq\brbra{1-\alpha_{t}}f\brbra{\hat{x}^{t}}+\alpha_{t}\tilde{l}+\frac{\tilde{L}_{t}}{2}\norm{\bar{x}^{t,r_{t}}-\underline{x}^{t,l_{t}+1}}^{2}+\frac{\alpha_{t}}{4}\brbra{f\brbra{\hat{x}^{t}}-\tilde{l}}
\end{split}
\ ,
\]
where $(a)$ holds by the definition of $\tilde{L}_{t}$ in~\eqref{eq:defn_tilde_L_t}, $(b)$ is
by the definition of $\bar{x}^{t,r_{t}}$ and $\ell_{f}$, and $(c)$
follows from the convexity of $f$ and $x^{t,r_{t}}\in X\brbra{t,r_{t}}$.
Subtracting $\tilde{l}$ on both sides gives
\[
f\brbra{\hat{x}^{t+1}}-\tilde{l}\leq\brbra{1-\frac{3}{4}\alpha_{t}}\brbra{f\brbra{\hat{x}^{t}}-\tilde{l}}+\frac{\tilde{L}_{t}\alpha_{t}^{2}}{2}\norm{x^{t,r_{t}}-x^{t,l_{t}}}^{2}\ ,
\]
which implies that $L_{t}\leq\tilde{L}_{t}$. Next, we consider the
upper bound of $\bar{L}\brbra N$. By the definition of $\bar{L}\brbra N$
and $\onebf_{\mcal E(t)}$ , the upper bound of $\bar{L}\brbra N$
is given as follows:
\[
\begin{split}\bar{L}\brbra N & {=}\frac{\sum_{t=1}^{N}\omega_{t}\alpha_{t}^{2}L_{t}\onebf_{\mcal E(t)}\norm{x^{t,r_{t}}-x^{t,l_{t}}}^{2}/2}{\sum_{t=1}^{N}\norm{x^{t,r_{t}}-x^{t,l_{t}}}^{2}}\leq \frac{4\sum_{t=1}^{N}\onebf_{\mcal E(t)}L_{t}\norm{x^{t,r_{t}}-x^{t,l_{t}}}^{2}}{\sum_{t=1}^{N}\norm{x^{t,r_{t}}-x^{t,l_{t}}}^{2}}
\end{split}
\ ,
\]
where the inequality follows from the parameter choice
$\omega_{t}=\brbra{t+2}\brbra{t+3}/2$ and $\alpha_{t}=4/\brbra{t+3}$, for which
$\frac{\omega_{t}\alpha_{t}^{2}}{2}=\frac{4(t+2)}{t+3}\leq 4.$
Finally, if $B\geq k$ and $f$ is $(k,L)$-$\pws$, then the pigeonhole
principle gives indices $0\leq l_t<r_t\leq B$ such that
$\bar{x}^{t,r_t}$ and $\underline{x}^{t,l_t+1}$ lie in the same smooth piece.
Hence $L_t \leq \tilde L_t\leq O(1)L$.
\ifdefined\isarxiv
    \end{proof}
\else
    \qedsymbol\end{proof}
\fi

\begin{proof}[\underline{Proof of Theorem~\ref{thm:complexity_idealized_apex}}]
If $f(\bar{y}^{s})=f^{*}$ at the beginning of some stage, the claims for that stage are immediate. Otherwise, applying Theorem~\ref{thm:f_star} to the $s$-th call to $\apex$ with $\hat{x}_{s}^{1}=\bar{y}^{s}$ and $\tilde{l}=f^{*}$ gives
\[
f\brbra{\hat{x}_{s}^{t+1}}-f^{*}\leq \frac{6}{\brbra{t+2}\brbra{t+3}}\brbra{f\brbra{\bar{y}^{s}}-f^{*}}+\frac{2\bar{L}_{s}\brbra t}{\brbra{t+2}\brbra{t+3}}\dist^{2}\brbra{\bar{y}^{s},X^{*}}\ .
\]
Since $f$ is $\mu^{*}$-QG, $\dist^{2}\brbra{\bar{y}^{s},X^{*}}\leq \frac{2}{\mu^{*}}\brbra{f\brbra{\bar{y}^{s}}-f^{*}}$, and therefore
\[
f\brbra{\hat{x}_{s}^{t+1}}-f^{*}\leq \frac{6+4\bar{L}_{s}\brbra t/\mu^{*}}{\brbra{t+2}\brbra{t+3}}\brbra{f\brbra{\bar{y}^{s}}-f^{*}}\ .
\]
Since $t_s+1$ is the first inner iteration satisfying the stage-$s$ stopping rule, the stage has not stopped at iteration $t_s$ whenever $t_s\geq1$. Hence, for $t_s\geq1$,
\[
\theta<\frac{f\brbra{\hat{x}_{s}^{t_s}}-f^{*}}{f\brbra{\bar{y}^{s}}-f^{*}}
\leq
\frac{6+4\bar{L}_{s}\brbra{t_s - 1}/\mu^{*}}{\brbra{t_s+2}\brbra{t_s+3}}\ .
\]
The geometric decrease follows directly from the stopping rule:
\[
f\brbra{\bar y^{s + 1}}-f^*
=f\brbra{\hat x_s^{t_s+1}}-f^*
\leq \theta\brbra{f\brbra{\bar y^{s}}-f^*}.
\]
Iterating this inequality over $s=1,\ldots,S$ gives the claimed bound on $f(\bar y^{S + 1})-f^*$.

Define $L_{\text{avg}}=\max_{s=1,\ldots,S}\bar L_s(t_s-1)$. The stopping-index inequality above implies
$t_s+1\leq O(1)\sqrt{\frac{L_{\text{avg}}}{\mu^*}}$. Since each inner iteration uses $O(\bundleSize)$ gradient evaluations, the cost through the first $S$ stages is bounded by $O(1)\bundleSize\sqrt{\frac{L_{\text{avg}}}{\mu^*}}\,S$.
If the algorithm is stopped at the first stage with $f(\bar y^{S+1})-f^*\leq\vep$, then the geometric decrease gives
$S\leq O(1){\log\brbra{\frac{f\brbra{\bar y^1}-f^*}{\vep}}}$. This proves the stated gradient-evaluation complexity.

It remains to prove~\eqref{eq:upper_L_s_t_s}. If no event $\mcal E_s(t)$ occurs, then $\bar L_s(t_s)=0$ by definition and the convention $0/0=0$ gives the result. Otherwise, let $L_{s,t}$ be the stage-$s$ counterpart of $L_t$ in~\eqref{eq:L_N}. The definition of $\bar L$ and the identity
$\omega_t\alpha_t^2/2=4(t+2)/(t+3)\leq4$ give
\begin{equation*}
\bar{L}_{s}\brbra{t_{s} - 1}
\leq
\frac{4\sum_{t=1}^{t_s - 1}\onebf_{\mcal E_s(t)}L_{s,t}\norm{x_{s}^{t,r_{t}}-x_{s}^{t,l_{t}}}^{2}}
{\sum_{t=1}^{t_s - 1}\norm{x_{s}^{t,r_{t}}-x_{s}^{t,l_{t}}}^{2}}\ .
\end{equation*}
For every $t$ such that $\mcal E_s(t)$ occurs, the definition of $L_{s,t}$ gives
\[
\frac{1}{2}\alpha_{s,t}^{2}L_{s,t}\norm{x_{s}^{t,r_t}-x_{s}^{t,l_t}}^{2}=f\brbra{\hat{x}_{s}^{t+1}}-f^{*}-\brbra{1-\frac{3}{4}\alpha_{s,t}}\brbra{f\brbra{\hat{x}_{s}^{t}}-f^{*}}.
\]
Since $\mcal E_{s}(t)$ means
$f\brbra{\hat{x}_{s}^{t+1}}-f^{*}>\brbra{1-\alpha_{s,t}/2}\brbra{f\brbra{\hat{x}_{s}^{t}}-f^{*}}$
and the update rule ensures
$f\brbra{\hat{x}_{s}^{t+1}}\leq f\brbra{\hat{x}_{s}^{t}}$, we have
\begin{equation}
\frac{\alpha_{s,t}}{4}\brbra{f\brbra{\hat{x}_{s}^{t}}-f^{*}}
\leq
\frac{1}{2}\alpha_{s,t}^{2}L_{s,t}\norm{x_{s}^{t,r_t}-x_{s}^{t,l_t}}^{2}
\leq
\frac{3\alpha_{s,t}}{4}\brbra{f\brbra{\hat{x}_{s}^{t}}-f^{*}}\ .
\end{equation}
Here and below the displayed inequalities are used only on iterations with $\mcal E_s(t)$ and $1\leq t\leq t_s$. Since $t_s+1$ is the first stopping index, while $f(\hat{x}_s^t)$ is nonincreasing,
\[
\theta\brbra{f\brbra{\hat{x}_{s}^{1}}-f^{*}}
<
f\brbra{\hat{x}_{s}^{t}}-f^{*}
\leq
f\brbra{\hat{x}_{s}^{1}}-f^{*}.
\]
Using $\alpha_{s,t}=4/(t+3)$, we obtain
\[
\frac{\theta(t+3)}{8}\brbra{f\brbra{\hat{x}_{s}^{1}}-f^{*}}
\leq
L_{s,t}\norm{x_{s}^{t,r_{t}}-x_{s}^{t,l_{t}}}^{2}
\leq
\frac{3(t+3)}{8}\brbra{f\brbra{\hat{x}_{s}^{1}}-f^{*}} .
\]
Substituting these two bounds into the preceding upper bound on
$\bar L_s(t_s - 1)$ yields
\[
\bar{L}_{s}\brbra{t_{s} - 1}
\leq
\frac{12}{\theta}\cdot
\frac{\sum_{t=1}^{t_{s}-1}\onebf_{\mcal E_s(t)}(t+3)}
{\sum_{t=1}^{t_{s}-1}\brbra{\onebf_{\mcal E_s(t)}(t+3)\Big/L_{s,t}}}\ .
\]
Finally, Proposition~\ref{prop:upper_bound_LbarN} gives $L_{s,t}\leq\tilde{L}_{s,t}$, so replacing $L_{s,t}$ by $\tilde L_{s,t}$ in the denominator only weakens the bound. This proves~\eqref{eq:upper_L_s_t_s}.
\ifdefined\isarxiv
    \end{proof}
\else
    \qedsymbol\end{proof}
\fi

\begin{proof}[\underline{Proof of Proposition~\ref{prop:lower_bound_event_time_ratio}}]
Let $\mcal I_{s}^{c}:=\bcbra{t\in[t_{s}]:\mcal E_{s}^{c}\brbra t}$. For each $t\in\mcal I_s^c$, we have
$$f\brbra{\hat{x}_{s}^{t}}-\tilde{l}\le \brbra{1-\frac{\alpha_{s,t}}{2}}\brbra{f\brbra{\hat{x}_{s}^{t-1}}-\tilde{l}}
=\frac{t+1}{t+3}\brbra{f\brbra{\hat{x}_{s}^{t-1}}-\tilde{l}},$$ while for $t\notin\mcal I_s^c$ we only use $f(\hat{x}_s^t)\le f(\hat{x}_s^{t-1})$. Since $t_s+1$ is the first index with $f\brbra{\hat{x}_{s}^{t_s+1}}-\tilde l\le \theta\brbra{f\brbra{\hat{x}_{s}^{1}}-\tilde l}$,
\[
\theta<\frac{f\brbra{\hat{x}_{s}^{t_s}}-\tilde l}{f\brbra{\hat{x}_{s}^{1}}-\tilde l}
\le \prod_{t\in\mcal I_s^c}\frac{t+1}{t+3}
\le \prod_{t=t_s-|\mcal I_s^c|+1}^{t_s}\frac{t+1}{t+3}
=\frac{(t_s-|\mcal I_s^c|+2)(t_s-|\mcal I_s^c|+3)}{(t_s+2)(t_s+3)},
\]
where the third inequality uses that $\frac{t+1}{t+3}$ is increasing in $t$. Hence
$t_s-|\mcal I_s^c|\ge \left\lfloor \frac{-1+\sqrt{1+4\theta(t_s+2)(t_s+3)}}{2}\right\rfloor-1$. Among the first $t_s$ indices, exactly $|\mcal I_s^c|$ belong to $\mcal I_s^c$, so
\[
\sum_{t=1}^{t_s}\onebf_{\mcal E_s(t)}= t_s-|\mcal I_s^c|\ge \left\lfloor \frac{-1+\sqrt{1+4\theta(t_s+2)(t_s+3)}}{2}\right\rfloor-1.
\]
Dividing by $t_s$ gives the result, and the extra $\max\{\cdot,0\}$ is only to keep the lower bound nonnegative.
\ifdefined\isarxiv
    \end{proof}
\else
    \qedsymbol\end{proof}
\fi

\section{\protect\label{sec:Normalized-Wolfe-Certificate}Normalized Wolfe
certificate}

To guarantee that the iterates $\bcbra{f\brbra{\hat{x}^{t}}}$ converge
to the true optimum $f^{*}$, Algorithm~\ref{alg:apex} requires
prior knowledge of $\tilde{l}=f^{*}$. While this assumption holds
in certain applications~\cite{dang2017linearly,devanathan2023polyak}, it is generally
impractical in broader optimization settings. To address this limitation,
we instead maintain an estimate of a lower bound on $f^{*}$. Since the current function value provides a natural
upper bound, we choose a target level between the current lower and upper bounds. As optimization
progresses, this estimate can be refined, ultimately leading to the
exact optimum.

In this section, we introduce the Normalized Wolfe certificate ($\Wcer$),
a procedure that constructs a reliable lower bound on $f^{*}$ by
exploiting the QG condition of the objective. Specifically, we (i)
formally define the $\Wcer$ (Definition~\ref{def:W_certificate}), (ii) prove that it yields a valid underestimate
of $f^{*}$ under the QG condition (Proposition~\ref{prop:W_certificate_lower_bound}), and (iii) describe how to apply Algorithm~\ref{alg:AWG}
to generate a $\Wcer$.
\begin{defn}
[Normalized Wolfe certificate, $\Wcer$\label{def:W_certificate}]Let $\iota>0$ and $\nu\geq0$.
$\bar{y}\in X$ is called an $\brbra{\iota,\nu}$-normalized Wolfe stationary point of
$f$ if there exists an evaluation point set $\mcal P=\bcbra{z^{0}=\bar{y},z^{1},\ldots}\subseteq\mcal B\brbra{\bar{y},\iota}$
such that
\begin{equation}
\mcal V_{\mcal P,f}\brbra{\iota;\bar{y}}=\frac{1}{\iota}\max_{x\in\mcal B\brbra{\bar{y},\iota}\cap X}\Brbra{\psi_{\mcal P,f}\brbra{\bar{y}}-\psi_{\mcal P,f}\brbra x}\leq\nu\ ,\ \text{where}\ \psi_{\mcal P,f}\brbra x=\max_{z\in\mcal P}\bcbra{\ell_{f}\brbra{x;z}}\ .\label{eq:defn_V_func}
\end{equation}
In this case, we call $\mcal P$ an $\brbra{\iota,\nu}$ normalized Wolfe certificate for $f$ at point $\bar{y}$.
\end{defn}
Zhang and Sra~\cite{zhang2025linearly} introduced the $\Wcer$ as a natural extension of
the classical gradient-norm criterion to {\pws} objectives
by quantifying the maximal normalized decrease of a cutting plane model. This model is constructed from multiple supporting
hyperplanes evaluated within an $\iota$-neighborhood of the current
iterate, and the resulting certificate can be computed using only
a first-order oracle.

Similar to the gradient norm in smooth optimization, the $\Wcer$ exhibits a comparable
property: $\Wcer$ provides a trustworthy lower bound
on the optimum $f^{*}$ under the QG condition, which is 
demonstrated in the following  Proposition~\ref{prop:W_certificate_lower_bound}.

\begin{proposition}
\label{prop:W_certificate_lower_bound}Let $f$ be a convex function
satisfying $\mu$-QG. If $\mcal P$ constitutes an $\brbra{\iota,\nu}$-$\Wcer$
for the point $\bar{y}$, then the function value at $\bar{y}$ obeys
the bound $f\brbra{\bar{y}}-f^{*}\leq\max\bcbra{\iota\nu,{2\nu^{2}}/{\mu}}$.
\end{proposition}
\noindent\emph{Proof deferred.} The detailed proof is collected in Subsection~\ref{subsec:wolfe-certificate-detailed-proofs}.

Proposition~\ref{prop:W_certificate_lower_bound} establishes that the $\Wcer$ provides a rigorous and practically implementable criterion for certifying the near-optimality of any candidate point $\bar{y}\in X$.
Building on the bundle-based iterative framework for constructing a $\Wcer$ for $\pws$ functions developed in \cite{zhang2025linearly}, we introduce Algorithm~\ref{alg:AWG} below. This algorithm incorporates $\apex$, substantially reducing the number of oracle calls needed to generate a certificate with quality that matches or even exceeds that of the conventional BL-based method.

Algorithm~\ref{alg:AWG} takes as input a candidate point $\bar{y}$, a gap
estimate $\Delta$, the bundle size $\bundleSize$, a maximum certification
radius $\iota_{\max}$, and a scaling parameter $\beta>0$. In each outer
iteration, the subroutine $\onestep$ builds supporting cuts at the accelerated
evaluation points $\bcbra{\underline{x}^{t,i}}$ and computes the corresponding
projection points $\bcbra{x^{t,i}}$ from the reference point $\bar{y}$. The
resulting $\Wcer$ is formed from the evaluation points
$\bcbra{\underline{x}^{t,i}}$, rather than from the projection points
$\bcbra{x^{t,i}}$ used in the non-accelerated construction
of~\cite{zhang2025linearly}.
By Proposition~\ref{prop:converge}, the proximal distance of the returned
projection point from $\bar{y}$ is nondecreasing over the iterations. Hence, if
the subroutine detects an infeasible projection subproblem or if this distance
exceeds $\iota_{\max}$, Algorithm~\ref{alg:AWG} reaches
Line~\ref{enu:return_W_cer} and returns a valid
$\brbra{\iota_{\max},\brbra{1+\beta}\Delta/\iota_{\max}}$-$\Wcer$ for
$\bar{y}$. Moreover, when the gap estimate is valid, i.e.,
$f\brbra{\bar{y}}-f^*\leq\Delta$, Proposition~\ref{prop:AWG_W_cer_gen}
guarantees that Algorithm~\ref{alg:AWG} reaches this certificate-return line.
Thus, any exit through Line~\ref{enu:WG_mu_wrong} certifies that $\Delta$
underestimates the true optimality gap.

\begin{algorithm}
\caption{Accelerated $\protect\Wcer$ Generation, $\protect\mcal{AWG}\protect\brbra{\bar{y},\Delta,\iota_{\max},\bundleSize,\beta}$}
\label{alg:AWG}
\begin{algorithmic}[1]
\State \textbf{Initialize:} $x^{1}=\bar{y},\hat{x}^{1}=\bar{y},\tilde{l}=f\brbra{\bar{y}}-\brbra{1+\beta}\Delta$
\For{$t=1,\ldots$}
    \State $\brbra{\hat{x}^{t+1},x^{t+1},\text{\textbf{Flag}},L_{t},\norm{x^{t,l_{t}}-x^{t,r_{t}}}}\leftarrow\onestep\brbra{\bar{y},\hat{x}^{t},x^{t},\tilde{l},\bundleSize}$
    \If{$\flag$ is $\true$ or $\norm{x^{t+1}-\bar{y}}>\iota_{\max}$}
        \State \label{enu:return_W_cer} \textbf{Return} $\true$
    \EndIf
    \State Compute $\bar{L}\brbra t$ (defined in~\eqref{eq:L_N}) based on $\norm{x^{t,l_{t}}-x^{t,r_{t}}}$ \Comment{Just for analysis}
    \If{$f\brbra{\hat{x}^{t + 1}}< f(\bar{y})-\Delta$}
        \State \label{enu:WG_mu_wrong} \textbf{Return} $\false$
    \EndIf
\EndFor
\end{algorithmic}
\end{algorithm}

We establish Proposition~\ref{prop:AWG_W_cer_gen},
which offers a rigorous characterization of the behavior
of Algorithm~\ref{alg:AWG}.

%

\begin{proposition}
\label{prop:AWG_W_cer_gen}
Let $\omega_{t}=(t+2)(t+3)/2$ and $\alpha_{t}=4/(t+3)$ in
Algorithm~\ref{alg:AWG}. Suppose Algorithm~\ref{alg:AWG} terminates at
iteration $t_{\mcal W}$. Then the weighted mean empirical Lipschitz constant
$\bar L(t_{\mcal W}-1)$ and the termination iteration $t_{\mcal W}$ satisfy
{\small
\begin{equation}\label{eq:weighted_harmonic_mean_AWG}
\bar{L}\brbra{t_{\mcal W}-1}\leq
\frac{12\brbra{1+\beta}}{\beta}
\frac{\sum_{t=1}^{t_{\mcal W}-1}\onebf_{\mcal E\brbra t}\brbra{t+3}}
{\sum_{t=1}^{t_{\mcal W}-1}\onebf_{\mcal E\brbra t}\brbra{t+3}/\tilde{L}_{t}}
\text{ and }
t_{\mcal W}\leq
\min\bcbra{t:t>\sqrt{\frac{2\iota_{\max}^{2}\bar{L}\brbra t+6\brbra{1+\beta}\Delta}{\beta\Delta}}}.
\end{equation}}
Here $\tilde{L}_{t}$ is the local empirical Lipschitz constant for
$\onestep$ defined in~\eqref{eq:defn_tilde_L_t}, and $\mcal E(t)$ is defined
in~\eqref{eq:event_E_t}. If, in addition,
$f(\bar{y})-f^{*}\leq\Delta$, then Algorithm~\ref{alg:AWG} must enter
Line~\ref{enu:return_W_cer}. Consequently, there exists a point set
$\mcal P_{t_{\mcal W}}$ that forms an
$\brbra{\iota_{\max},\brbra{1+\beta}\Delta/\iota_{\max}}$-$\Wcer$ for
$\bar{y}$.
\end{proposition}
\noindent\emph{Proof deferred.} The detailed proof is collected in Subsection~\ref{subsec:wolfe-certificate-detailed-proofs}.

We make a few remarks regarding the result. First, although algorithms
such as the bundle-level method proposed in~\cite{zhang2025linearly}
or $\mcal{AWG}$ are useful for constructing certificates, it is important
to emphasize that the $\Wcer$ itself is independent of any specific algorithm.
In other words, $\Wcer$ is an inherent property of a candidate point,
and an algorithm merely serves as a mechanism to generate a point
set that qualifies as a $\Wcer$ for that candidate. Second, $\Wcer$
is also independent of the $\mu$-QG condition. If the goal is to
construct an upper bound on the optimality gap, then a QG
assumption is indeed required (see Proposition~\ref{prop:W_certificate_lower_bound}). However, if $\Wcer$ is adopted purely
as a termination criterion (like the gradient norm), the QG condition is not necessary.

\subsection{Detailed proofs\label{subsec:wolfe-certificate-detailed-proofs}}
We now provide the detailed proofs of the certificate properties, the accelerated certificate-generation guarantee, and the certificate-transfer result stated above. To organize the arguments, we first establish two intermediate lemmas that will be used in the subsequent proofs.
\begin{lem}
\label{lem:V_non_increasing}$\mcal V_{\mcal P,f}\brbra{\iota;\bar{y}}$
(defined in~\eqref{eq:defn_V_func}) is non-increasing with respect
to $\iota$.
\end{lem}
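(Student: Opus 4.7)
The plan is to exploit two facts: (i) the surrogate $\psi_{\mcal P,f}$ is convex in its argument, being a pointwise maximum of affine functions $\ell_f(\,\cdot\,;z)$, and (ii) the feasible ball $\mcal B(\bar{y},\iota) \cap X$ scales linearly with $\iota$ around $\bar{y}$ (using convexity of $X$ and $\bar{y}\in X$). Concretely, for any $0<\iota_1<\iota_2$ I would take a maximizer $x_2^\star\in \mcal B(\bar{y},\iota_2)\cap X$ defining $\mcal V_{\mcal P,f}(\iota_2;\bar{y})$, and rescale it toward $\bar{y}$ by setting
\[
x_1 \;:=\; \bar{y} + \tfrac{\iota_1}{\iota_2}\,(x_2^\star-\bar{y}) \;=\; \bigl(1-\tfrac{\iota_1}{\iota_2}\bigr)\bar{y} + \tfrac{\iota_1}{\iota_2}\,x_2^\star.
\]
Then $\|x_1-\bar{y}\|=\tfrac{\iota_1}{\iota_2}\|x_2^\star-\bar{y}\|\leq \iota_1$, and convexity of $X$ together with $\bar{y},x_2^\star\in X$ yields $x_1\in \mcal B(\bar{y},\iota_1)\cap X$.

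Next I would apply convexity of $\psi_{\mcal P,f}$ to this convex combination:
\[
\psi_{\mcal P,f}(x_1) \;\leq\; \bigl(1-\tfrac{\iota_1}{\iota_2}\bigr)\,\psi_{\mcal P,f}(\bar{y}) + \tfrac{\iota_1}{\iota_2}\,\psi_{\mcal P,f}(x_2^\star),
\]
which rearranges to $\psi_{\mcal P,f}(\bar{y})-\psi_{\mcal P,f}(x_1) \geq \tfrac{\iota_1}{\iota_2}\bigl[\psi_{\mcal P,f}(\bar{y})-\psi_{\mcal P,f}(x_2^\star)\bigr]$. Dividing by $\iota_1$ and using the fact that $x_1$ is a feasible candidate for the $\iota_1$-problem gives
\[
\mcal V_{\mcal P,f}(\iota_1;\bar{y}) \;\geq\; \tfrac{1}{\iota_1}\bigl(\psi_{\mcal P,f}(\bar{y})-\psi_{\mcal P,f}(x_1)\bigr) \;\geq\; \tfrac{1}{\iota_2}\bigl(\psi_{\mcal P,f}(\bar{y})-\psi_{\mcal P,f}(x_2^\star)\bigr) \;=\; \mcal V_{\mcal P,f}(\iota_2;\bar{y}),
\]
which is exactly the claimed monotonicity.

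There is no real obstacle here: the argument is a standard "scaling to a smaller ball" trick that works for any convex surrogate anchored at $\bar{y}$. The only subtlety worth flagging in the write-up is ensuring $x_1\in X$, which is why I would explicitly invoke convexity of $X$ and $\bar{y}\in X$ (the latter is part of Definition~\ref{def:W_certificate}). The normalization by $1/\iota$ in $\mcal V_{\mcal P,f}$ is precisely what matches the linear scaling of the gap along the segment from $\bar{y}$ to $x_2^\star$, which is why we obtain equality of the two right-hand sides rather than a strict inequality going the wrong way.
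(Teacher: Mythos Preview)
Your proof is correct and follows essentially the same approach as the paper: take a maximizer for the larger radius, rescale it toward $\bar{y}$ by the ratio of radii (using convexity of $X$ and $\bar{y}\in X$), and invoke convexity of $\psi_{\mcal P,f}$ to compare the normalized gaps. The paper phrases the convexity step as ``monotonicity of the secant line'' rather than the Jensen-style inequality you use, but these are equivalent formulations of the same argument.
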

\begin{proof}
Let some $\iota_{1}>\iota_{2}$ be given and let $x_{\iota_{1}}$
and $x_{\iota_{2}}$ denote the respective optimal solution of $\mcal V_{\mcal P,f}\brbra{\iota_{1};\bar{y}}$
and $\mcal V_{\mcal P,f}\brbra{\iota_{2};\bar{y}}$. By the convexity
of $X$, we have point $\tilde{x}:=\bar{y}+\frac{\iota_{2}}{\iota_{1}}\brbra{x_{\iota_{1}}-\bar{y}}\in X.$
Since $\psi_{\mcal P,f}\brbra x$ is convex and $\bar{y},\tilde{x}$
and $x_{\iota_{1}}$ lie on the same line, we get from the monotonicity
of the secant line:
\begin{equation}
\frac{\psi_{\mcal P,f}\brbra{\bar{y}}-\psi_{\mcal P,f}\brbra{\tilde{x}}}{\norm{\tilde{x}-\bar{y}}}\geq\frac{\psi_{\mcal P,f}\brbra{\bar{y}}-\psi_{\mcal P,f}\brbra{x_{\iota_{1}}}}{\norm{x_{\iota_{1}}-\bar{y}}}\ .\label{eq:secant}
\end{equation}
Combining~\eqref{eq:secant}, $\norm{\bar{y}-\tilde{x}}=\iota_{2}\norm{x_{\iota_{1}}-\bar{y}}/\iota_{1}$
gives $\brbra{\psi\brbra{\bar{y}}-\psi\brbra{\tilde{x}}}/\iota_{2}\geq\brbra{\psi\brbra{\bar{y}}-\psi\brbra{x_{\iota_{1}}}}/\iota_{1}$.
By the definition of $x_{\iota_{2}}$, we have $\brbra{\psi\brbra{\bar{y}}-\psi\brbra{x_{\iota_{1}}}}/\iota_{1}\leq\brbra{\psi\brbra{\bar{y}}-\psi\brbra{\tilde{x}}}/\iota_{2}\leq\brbra{\psi\brbra{\bar{y}}-\psi\brbra{x_{\iota_{2}}}}/\iota_{2}$.
 This completes the proof of that $\mcal V_{\mcal P,f}\brbra{\iota;\bar{y}}$
is non-increasing with respect to $\iota$.
\ifdefined\isarxiv
    \end{proof}
\else
    \qedsymbol\end{proof}
\fi

\begin{lem}
\label{lem:lower_bound_of_ball}
After the first $t$ calls to $\onestep$ in Algorithm~\ref{alg:AWG}, let
$
\mcal P_t:=\bcbra{\underline{x}^{i,j}:1\leq i\leq t,\ 1\leq j\leq \bundleSize}
$
be the set of generated evaluation points, and let $x^{t+1}$ be the projection
point returned at iteration $t$. For any
$\iota\in(0,\norm{x^{t+1}-\bar y})$, define
$\tilde{\mcal P}(\iota):= \mcal P_t\cap\mcal B(\bar y,\iota)$. Then
$\min_{x\in\mcal B(\bar y,\iota)\cap X}
\psi_{\tilde{\mcal P}(\iota),f}(x)>\tilde l .$
\end{lem}

\begin{proof}
Let $\Psi_{t,f}$ denote the cutting-plane feasible region generated by all cuts
constructed through iteration $t$:
\begin{equation}
\Psi_{t,f}=\bcbra{{x}\in X:\psi_{\mcal P_{t},f}\brbra{{x}}\leq\tilde{l}}.\label{eq:inducive_claim}
\end{equation}
    We first claim for any $\hat{x}\in X$ such that $\inner{\hat{x}-x^{t+1,0}}{x^{t+1,0}-\bar{y}}<0$, we have $\psi_{\mcal P_{t},f}\brbra{\hat{x}}>\tilde{l}$. This is equivalent
to for every $\hat{x}\in\Psi_{t,f}\cap X$, we claim $\inner{\hat{x}-x^{t+1,0}}{x^{t+1,0}-\bar{y}}\geq0$
holds for any $t\geq1$.

We prove the second claim by induction. For base case $t=1$, we have $x^{2,0}=x^{1,\bundleSize}$.
Since $x^{1,0}=\bar{y}$, then $\bar{X}\brbra{t,0}$ is
eliminated. Hence, $x^{1,\bundleSize}$ is the closest point to set $\bcbra{x:\psi_{\mcal P_{1},f}\brbra x\leq\tilde{l}}$,
which implies that for any point satisfies $\hat{x}\in\Psi_{1,f}\cap X$,
we have $\inner{\hat{x}-x^{2,0}}{x^{2,0}-\bar{y}}\geq0$. Suppose
the claim holds for $1,\ldots,t$, we
aim to prove if every $\hat{x}\in\Psi_{t,f}\cap X$, we have $\inner{\hat{x}-x^{t+1,0}}{x^{t+1,0}-\bar{y}}\geq0$.
Since $\hat{x}\in\Psi_{t-1,f}$ implies $\inner{\hat{x}-x^{t,0}}{x^{t,0}-\bar{y}}\geq0$
by induction. Hence, the goal of our proof is equivalent to for any
$\hat{x}\in X$, $\inner{\hat{x}-x^{t,0}}{x^{t,0}-\bar{y}}\geq0$
and $\psi_{\mcal P_{t}\backslash\mcal P_{t-1},f}\brbra{\hat{x}}\leq\tilde{l}$,
we have $\inner{\hat{x}-x^{t+1,0}}{x^{t+1,0}-\bar{y}}\geq0$. Note
that the region $$\bcbra{x\in X:\inner{\hat{x}-x^{t,0}}{x^{t,0}-\bar{y}}\geq0,\psi_{\mcal P_{t}\backslash\mcal P_{t-1},f}\brbra{\hat{x}}\leq\tilde{l}}$$
of $\hat{x}$ is exactly the feasible region $X(t,\bundleSize)$ of last bundle
subproblem at $t$-th iteration. Since $x^{t+1,0}=x^{t,\bundleSize}$ is the
closest point to the set $X(t,\bundleSize)$, then we have any $\hat{x}\in X(t,\bundleSize)$,
$\inner{\hat{x}-x^{t+1,0}}{x^{t+1,0}-\bar{y}}\geq0$ holds.

Now, we are ready to prove the main result of Lemma~\ref{lem:lower_bound_of_ball}.
 Since sequence $\bcbra{\norm{x^{t,i}-\bar{y}}}$ is non-decreasing
by Proposition~\ref{prop:converge} and $\norm{x^{1,0}-\bar{y}}=0$. Note
that $x^{i,\bundleSize}=x^{i+1,0}$ holds for any $i\in\bsbra{t-1}$. Hence
there exists an index pair $\brbra{\tilde{t},j}\in\bsbra t\times\bsbra{\bundleSize}$
such that $\iota\in\left(\norm{x^{\tilde{t},j-1}-\bar{y}},\norm{x^{\tilde{t},j}-\bar{y}}\right]$.
We first denote $\mcal P\brbra{\tilde{t},j}=\bcbra{\underline{x}^{\tilde{t},1},\ldots,\underline{x}^{\tilde{t},j}}$,
then the set $X\brbra{t,i}$ in Algorithm~\ref{alg:One-step} can
be rewritten as 
$$X\brbra{\tilde{t},j}=\bcbra{x\in X:\psi_{\mcal P\brbra{\tilde{t},j},f}\brbra x\leq\tilde{l}\ ,\ \inner{x-x^{\tilde{t},0}}{x^{\tilde{t},0}-\bar{y}}\geq0}.$$
Since $x^{\tilde{t},j}$ is the closest point to set $X\brbra{\tilde{t},j}$.
Hence, $\forall\hat{x}\in X\brbra{\tilde{t},j}$, we have $\norm{\hat{x}-\bar{y}}\geq\norm{x^{\tilde{t},j}-\bar{y}}\geq\iota$.
This implies that for any $\hat{x}\in\mcal B\brbra{\bar{y},\iota}\cap X$,
we have $\psi_{\mcal P\brbra{\tilde{t},j},f}\brbra{\hat{x}}>\tilde{l}$
or $\inner{\hat{x}-x^{\tilde{t},0}}{x^{\tilde{t},0}-\bar{y}}<0$.
It follows from the inductive claim that $\hat{x}\in X$ with $\inner{\hat{x}-x^{\tilde{t},0}}{x^{\tilde{t},0}-\bar{y}}<0$ implies $\psi_{\mcal P\brbra{\tilde{t}-1},f}\brbra{\hat{x}}>\tilde{l}$. Therefore, for any $\hat{x}\in\mcal B\brbra{\bar{y},\iota}\cap X$, we have $\psi_{\mcal P\brbra{\tilde{t},j},f}\brbra{\hat{x}}>\tilde{l}$ or $\psi_{\mcal P\brbra{\tilde{t}-1},f}\brbra{\hat{x}}>\tilde{l}$, which completes the proof.
\ifdefined\isarxiv
    \end{proof}
\else
    \qedsymbol\end{proof}
\fi

\begin{proof}[\underline{Proof of Proposition~\ref{prop:W_certificate_lower_bound}}]
Since the objective function $f$ is convex, then we have $\psi_{\mcal P,f}\brbra x\leq f\brbra x$.
Furthermore, condition $\bar{y}\in\mcal P$ implies $\psi_{\mcal P,f}\brbra{\bar{y}}\leq f\brbra{\bar{y}}$.
We consider two cases based on the distance between $x_{p}^{*}$ and
$\bar{y}$, where $x_{p}^{*}=\argmin_{x\in X^{*}}\norm{\bar{y}-x}$
and $X^{*}:=\bcbra{x\in X:f\brbra x=f^{*}}$.

Case 1. $x_{p}^{*}\in\mcal B\brbra{\bar{y},\iota}$. Then we
have
\[
\begin{split}f\brbra{\bar{y}}-f^{*}\leq f\brbra{\bar{y}}-\psi_{\mcal P,f}\brbra{x_{p}^{*}} & \aleq\max_{x\in\mcal B\brbra{\bar{y},\iota}\cap X}\psi_{\mcal P,f}\brbra{\bar{y}}-\psi_{\mcal P,f}\brbra x\end{split}
\leq \iota\nu\ ,
\]
where $(a)$ follows from the convexity of $f$ and $\bar{y}\in\mcal P$
and these facts implies that $f\brbra{\bar{y}}=\psi_{\mcal P,f}\brbra{\bar{y}}$.

Case 2. $x_{p}^{*}\in\mcal B^{c}\brbra{\bar{y},\iota}$. From
the radius argument that $\norm{x_{p}^{*}-\bar{y}}\geq\iota$ and
$\mcal V_{\mcal P,f}\brbra{\iota;\bar{y}}$ is non-increasing with
respect to $\iota$, we have
\begin{equation}
\begin{split}f\brbra{\bar{y}}-f^{*} & \leq\psi_{\mcal P,f}\brbra{\bar{y}}-\psi_{\mcal P,f}\brbra{x_{p}^{*}}\\
 & \leq\max_{x\in\mcal B\brbra{\bar{y},\norm{x_{p}^{*}-\bar{y}}}}\bcbra{\psi_{\mcal P,f}\brbra{\bar{y}}-\psi_{\mcal P,f}\brbra x}\\
 & =\norm{x_{p}^{*}-\bar{y}}\mcal V_{\mcal P,f}\brbra{\norm{x_{p}^{*}-\bar{y}};\bar{y}}\\
 & \leq\norm{x_{p}^{*}-\bar{y}}\mcal V_{\mcal P,f}\brbra{\iota;\bar{y}}\leq\norm{x_{p}^{*}-\bar{y}}\nu
\end{split}
\ .\label{eq:lower_f_star}
\end{equation}
Combining the above inequality and $f\brbra{\bar{y}}-f^{*}\geq\frac{\mu}{2}\norm{\bar{y}-x_{p}^{*}}^{2}$
yields $\frac{\mu}{2}\norm{\bar{y}-x_{p}^{*}}^{2}\leq\norm{x_{p}^{*}-\bar{y}}\nu,$
which implies that
\begin{equation}
\norm{x_{p}^{*}-\bar{y}}\leq\frac{2\nu}{\mu}\ .\label{eq:upper_x_star_y_bar}
\end{equation}
Putting~\eqref{eq:lower_f_star} and~\eqref{eq:upper_x_star_y_bar}
together, we have $f\brbra{\bar{y}}-f^{*}\leq {2}\nu^{2}/{\mu}$.

Summarizing two cases completes our proof.
\ifdefined\isarxiv
    \end{proof}
\else
    \qedsymbol\end{proof}
\fi

\begin{proof}[\underline{Proof of Proposition~\ref{prop:AWG_W_cer_gen}}]
Set $\tilde l=f(\bar y)-(1+\beta)\Delta$. Algorithm~\ref{alg:AWG} is the
certificate-generation version of $\apex$, with $\alpha_t=4/(t+3)$ and
$\omega_t=(t+2)(t+3)/2$. Before termination, neither return branch has been
triggered; hence, for $1\leq t\leq t_{\mcal W}-1$,
\[
\beta\Delta\leq f(\hat{x}^{t+1})-\tilde l\leq(1+\beta)\Delta .
\]
The first inequality in~\eqref{eq:weighted_harmonic_mean_AWG} follows from the
same weighted-average argument as in~\eqref{eq:upper_L_s_t_s}, with
$\theta=\beta/(1+\beta)$, and from Proposition~\ref{prop:upper_bound_LbarN},
which replaces $L_t$ by $\tilde L_t$ and gives the constant
$12(1+\beta)/\beta$.

We next prove the second inequality in~\eqref{eq:weighted_harmonic_mean_AWG}.
It suffices to show that any integer $t$ satisfying
\[
t>\sqrt{\frac{2\iota_{\max}^{2}\bar L\brbra t
    +6(1+\beta)\Delta}{\beta\Delta}}
\]
cannot be a completed nonterminating iteration. Suppose, to the contrary, that
Algorithm~\ref{alg:AWG} has not terminated by the end of such an iteration $t$.
Then all subproblems up to iteration $t$ are feasible, the radius test at
Line~\ref{enu:return_W_cer} gives $\norm{x^{t+1}-\bar y}\leq\iota_{\max}$, and
the false-decrease test at Line~\ref{enu:WG_mu_wrong} gives
\[
f(\hat x^{t+1})-\tilde l\geq\beta\Delta .
\]
Since $\beta>0$, such a $t$ is at least $2$. Applying
Proposition~\ref{prop:converge} with
$\omega_t=(t+2)(t+3)/2$, $\alpha_t=4/(t+3)$, and
$\omega_1(1-\alpha_1/2)=3$ yields
\[
\frac{1}{2}(t+2)(t+3)\brbra{f(\hat x^{t+1})-\tilde l}
-3(1+\beta)\Delta
\leq \bar L\brbra t\norm{x^{t+1}-\bar y}^{2}.
\]
Hence
\[
\frac{1}{2}(t+2)(t+3)\beta\Delta-3(1+\beta)\Delta
\leq \bar L\brbra t\,\iota_{\max}^{2}.
\]
The displayed condition on $t$ implies the reverse strict inequality, since
$(t+2)(t+3)>t^{2}$, a contradiction. Thus no iteration before
$t_{\mcal W}$ can satisfy the displayed condition, proving the second
inequality in~\eqref{eq:weighted_harmonic_mean_AWG}.

Suppose next that Algorithm~\ref{alg:AWG} reaches Line~\ref{enu:return_W_cer}.
Let $\mcal P_{t_{\mcal W}}$ be the set of evaluation points generated up to the
terminating call and lying in $\mcal B(\bar y,\iota_{\max})$. Then
$\mcal P_{t_{\mcal W}}\subseteq\mcal B(\bar y,\iota_{\max})$. If the return is
triggered by the radius test, Lemma~\ref{lem:lower_bound_of_ball} with
$\iota=\iota_{\max}$ gives
$\psi_{\mcal P_{t_{\mcal W}},f}(x)>\tilde l$ for all
$x\in\mcal B(\bar y,\iota_{\max})\cap X$. If the return is triggered by an
infeasible subproblem, the same projection-separation argument used in
Lemma~\ref{lem:lower_bound_of_ball} gives the same lower bound. Since convexity
gives $\psi_{\mcal P_{t_{\mcal W}},f}(\bar y)\leq f(\bar y)$, we obtain
\[
\mcal V_{\mcal P_{t_{\mcal W}},f}\brbra{\iota_{\max};\bar y}
<\frac{f(\bar y)-\tilde l}{\iota_{\max}}
=\frac{(1+\beta)\Delta}{\iota_{\max}}.
\]
Thus $\mcal P_{t_{\mcal W}}$ forms an
$\brbra{\iota_{\max},(1+\beta)\Delta/\iota_{\max}}$-$\Wcer$ for $\bar y$.

It remains to rule out false termination under the additional assumption
$f(\bar y)-f^*\leq\Delta$. Since $f(\hat x^{t+1})\geq f^*$, the decrease test in
Line~\ref{enu:WG_mu_wrong} cannot hold. Therefore the main-loop termination must
occur at Line~\ref{enu:return_W_cer}.
\ifdefined\isarxiv
    \end{proof}
\else
    \qedsymbol\end{proof}
\fi

\section{\protect\label{sec:Restarted_mu_known}Restarted $\protect\apex$
for known $\mu^{*}$}
By leveraging the $\Wcer$, we can now address convex {\pws}
optimization under a $\mu^{*}$-QG condition without requiring prior knowledge
of the optimal value $f^{*}$. We maintain two evolving estimates: one is a lower bound ($\bar{f}-\Delta$)
on $f^{*}$ obtained from $\Wcer$ and the other is an upper bound
($\bar{f}$) on $f^{*}$ tracked by the best objective value encountered
along the iterates. We then restrict our search to the level between
these two bounds. Algorithm~\ref{alg:rapex_mu_star_known} implements
this idea via a simple restart mechanism. Algorithm~\ref{alg:rapex_mu_star_known}
initializes a valid gap estimate $\Delta_{1}$ by using convexity and the QG
condition. Subsequently, Algorithm~\ref{alg:rapex_mu_star_known} sets
the level between the current upper and lower bounds and runs
the accelerated bundle-level procedure for $\pws$ functions until one of two
restart conditions is met:
\begin{itemize}
\item Upper bound improvement (Line~\ref{enu:upper_bound_shrink}): the
upper bound improves sufficiently; specifically, the gap between the
upper bound and the previous lower bound decreases by a factor $\theta$.
\item Lower bound improvement (Line~\ref{enu:lower_bound_shrink}): the method produces an
iterate $x_{s}^{t+1}$ with $\norm{x_{s}^{t+1}-\bar{y}^{s}}$ large enough
to generate a new $\Wcer$, which by Proposition~\ref{prop:W_certificate_lower_bound}
yields a strictly tighter lower bound $(\bar{f}-\theta\Delta)$ on
$f^{*}$.
\end{itemize}
By alternately tightening the upper and lower bounds through these
two triggers, Algorithm~\ref{alg:rapex_mu_star_known} converges
to $f^{*}$ without requiring explicit knowledge of $f^{*}$.
\begin{algorithm}
\caption{Restarted $\protect\apex$ ($\text{r}\protect\apex$) for known $\mu^{*}$}
\label{alg:rapex_mu_star_known}
\begin{algorithmic}[1]
\Require the initial point $\bar{y}^{1}$, QG modulus $\mu^{*}$, the number of cuts $\bundleSize$, shrink factor $\theta\in\brbra{\frac{1}{2},1}$
\State \textbf{Initialize:} upper bound $\bar{f}^{1}=f\brbra{\bar{y}^{1}}$, optimality gap $\Delta_{1}=\frac{2\norm{f^{\prime}\brbra{\bar{y}^{1}}}^{2}}{\mu^{*}}$, lower bound $\underline{f}^{1}=\bar{f}^{1}-\Delta_{1}$, $s=1$
\While{$\true$}
    \State \label{enu:set_level} $x_{s}^{1}=\bar{y}^{s},\hat{x}_{s}^{1}=\bar{y}^{s},\tilde{l}^{s}=\bar{f}^{s}-\theta\Delta_{s}$
    \For{$t=1,\ldots$}
        \State $\brbra{\hat{x}_{s}^{t+1},x_{s}^{t+1},\text{\ensuremath{\flag}},\ldots}\leftarrow\onestep\brbra{\bar{y}^{s},\hat{x}_{s}^{t},x_{s}^{t},\tilde{l}^{s},\bundleSize}$
        \If{$f\brbra{\hat{x}^{t+1}_{s}}-\underline{f}^{s}\leq\theta\Delta_{s}$}
            \State \label{enu:upper_bound_shrink} $\bar{y}^{s+1}=\hat{x}^{t+1}_{s},\bar{f}^{s+1}=f\brbra{\hat{x}^{t+1}_{s}},\Delta_{s+1}=\bar{f}^{s+1} - \underline{f}^{s},\underline{f}^{s+1}=\underline{f}^{s}$, $s=s+1$, and go to Line \ref{enu:set_level}
        \ElsIf{$\norm{x_{s}^{t+1}-\bar{y}^{s}}^{2}>\frac{2\theta\Delta_{s}}{\mu^*}$ or $\flag$ is $\true$}
            \State \label{enu:lower_bound_shrink} $\bar{y}^{s+1}=\hat{x}^{t+1}_s,\bar{f}^{s+1}=f\brbra{\hat{x}^{t+1}_s},\Delta_{s+1}=\theta\Delta_{s},\underline{f}^{s+1}=\bar{f}^{s}-\Delta_{s+1}$, $s=s+1$, and go to Line \ref{enu:set_level}
        \EndIf
    \EndFor
\EndWhile
\end{algorithmic}
\end{algorithm}

In the following Theorem~\ref{thm:mu_known}, we state the convergence
result of Algorithm~\ref{alg:rapex_mu_star_known} formally.
\begin{thm}
\label{thm:mu_known}
Suppose that $f$ satisfies the QG condition with modulus $\mu^*$.
Let $\omega_{t}=\brbra{t+2}\brbra{t+3}/2$ and $\alpha_{t}=4/\brbra{t+3}$ in Algorithm~\ref{alg:rapex_mu_star_known}.
Then the following statements hold:
\begin{enumerate}
\item If Algorithm~\ref{alg:rapex_mu_star_known} reaches Line~\ref{enu:lower_bound_shrink}, then there exists a point set $\mcal P_s$ that forms a
$\brbra{\sqrt{2\theta\Delta_{s}/\mu^{*}},\sqrt{\mu^{*}\theta\Delta_{s}/2}}$-$\Wcer$
for $\bar{y}^{s}$.
\item For any $\vep\in\brbra{0,{2\norm{f^{\prime}\brbra{\bar{y}^{1}}}^{2}}/{\mu^{*}}}$, to obtain a point $\bar{y}^{S+1}$ satisfying $f\brbra{\bar{y}^{S+1}}-f^{*}\leq\vep$, the total number of gradient evaluations is at most
\begin{equation}\label{eq:L_avg_complexity}
O(1)\bundleSize\max\bcbra{\sqrt{\frac{L_{\text{avg}}}{\mu^{*}}},1}\log\brbra{\frac{2\norm{f^{\prime}\brbra{\bar{y}^{1}}}^{2}}{\mu^{*}\vep}}\ ,
\end{equation}
where ${L}_{\text{avg}}=\max_{s=1,\ldots,S}\bar{L}_{s}(t_s - 1)$ and we adopt convention $\bar{L}_s{(0)} = 0$.
Here $t_s$ denotes the first inner-loop value whose call triggers a restart at stage $s$; the corresponding restart output is $\hat{x}_s^{t_s+1}$. Moreover, for every completed stage with $t_s>1$, $\bar{L}_s\brbra{t_s-1}$ admits the following upper bound: 
\begin{equation}\label{eq:upper_bound_mu_L_bar}
\bar{L}_s\brbra{t_s-1}
\leq
\frac{12\theta}{2\theta-1}\cdot
\frac{\sum_{t=1}^{t_s-1}\onebf_{\mcal E_{s}\brbra t}(t+3)}
{\sum_{t=1}^{t_s-1}\onebf_{\mcal E_{s}\brbra t}(t+3)/\tilde{L}_{s,t}}\ .
\end{equation}
Here $\bar{L}_s(t_s-1)$ is the weighted average of empirical Lipschitz constants defined in~\eqref{eq:L_N} over the pre-exit calls of stage $s$, $\tilde{L}_{s,t}$ is the local empirical Lipschitz constant in~\eqref{eq:defn_tilde_L_t}, $\mcal E_s(t)$ is the event in~\eqref{eq:event_E_t}, and we use the conventions $0/0=0$.
\item If $f$ is also a $\brbra{k,L}$-$\pws$ function with $\bundleSize\geq k$, then $L_{\text{avg}}\leq O(1)L$. Consequently, the total number of gradient evaluations needed to obtain $f(\bar{y}^{S+1})-f^{*}\leq\vep$ is at most
\begin{equation*}
O(1)\bundleSize\max\bcbra{\sqrt{\frac{L}{\mu^{*}}},1}\log\brbra{\frac{2\norm{f^{\prime}\brbra{\bar{y}^{1}}}^{2}}{\mu^{*}\vep}}\ .
\end{equation*}
\end{enumerate}
\end{thm}
\noindent\emph{Proof deferred.} The detailed proof is collected in Subsection~\ref{subsec:known-mu-detailed-proofs}.

A couple of remarks are in order regarding the convergence results
above. First, for a prescribed accuracy $\vep$, the number of oracle
evaluations required to obtain an $\vep$-optimal solution is shown in~\eqref{eq:L_avg_complexity}.
Once the bundle of supporting cuts fully captures
the underlying $\pws$ structure so that $L_{\text{avg}}\leq O(1)L$ (see Proposition~\ref{prop:upper_bound_LbarN} and the corresponding remark for details), this rate matches,
up to the multiplicative factor $\bundleSize$, the optimal complexity bound
for smooth convex optimization under the $\mu^{*}$-QG condition.
Moreover, since the number of active pieces may evolve as the algorithm
progresses, the method naturally transitions between a coarse piecewise
smooth regime and a smooth regime, adaptively accelerating convergence
once the structural decomposition is well resolved. Second, Algorithm~\ref{alg:rapex_mu_star_known}
requires only knowledge of the QG modulus $\mu^{*}$.
It does not depend on the global Lipschitz constant $L$, nor does
it require prior knowledge of the exact number of active pieces $k$.

\subsection{Detailed proofs\label{subsec:known-mu-detailed-proofs}}

We close this section with the proof of the known-QG convergence guarantee.

\begin{proof}[\underline{Proof of Theorem~\ref{thm:mu_known}}]
We first verify that the initial lower bound is valid. By convexity and the $\mu^*$-QG condition,
$f\brbra{\bar y^1}-f^*\leq\norm{f^{\prime}\brbra{\bar y^1}}\dist\brbra{\bar y^1,X^*}$ and
$\dist\brbra{\bar y^1,X^*}\leq\sqrt{2\brbra{f\brbra{\bar y^1}-f^*}/\mu^*}$; hence
$f(\bar y^1)-f^*\leq\Delta_1$. Therefore $\underline f^1=\bar f^1-\Delta_1\leq f^*$.

We next prove the certificate claim. When Line~\ref{enu:lower_bound_shrink} is reached, the certificate-generation condition in Proposition~\ref{prop:AWG_W_cer_gen} holds with level
$\tilde l^{s}=\bar f^{s}-\theta\Delta_{s}$ and radius
$\sqrt{2\theta\Delta_{s}/\mu^*}$. Hence by the similar proof process of Proposition~\ref{prop:AWG_W_cer_gen}, we have that there exists a point set $\mcal P_s$ forming a
$\brbra{\sqrt{2\theta\Delta_{s}/\mu^{*}},\sqrt{\mu^{*}\theta\Delta_{s}/2}}$-$\Wcer$
for $\bar y^{s}$.

This certificate also justifies the lower-bound update. Indeed, Proposition~\ref{prop:W_certificate_lower_bound} gives
$f\brbra{\bar y^{s}}-f^*\leq\theta\Delta_{s}$.
Therefore Line~\ref{enu:lower_bound_shrink} sets
$\underline f^{s+1}=\bar f^{s}-\theta\Delta_{s}\leq f^*$.
Line~\ref{enu:upper_bound_shrink} leaves the lower bound unchanged, so by induction $\underline f^s\leq f^*$ for every stage.
Moreover, either restart condition gives $\Delta_{s+1}\leq\theta\Delta_s$: this is immediate at Line~\ref{enu:lower_bound_shrink}.  Let $t_s$ be the first inner-loop value whose call triggers a restart at stage $s$. Then we have
$\Delta_{s+1}=f(\hat x_s^{t_s+1})-\underline f^{s}\leq\theta\Delta_{s}$ at Line~\ref{enu:upper_bound_shrink}. Since $\hat{x}_s^{t_s+1}$ is chosen with $f(\hat{x}_s^{t_s+1})\leq f(\bar y^s)$, the invariant $f(\bar y^{s+1})-f^*\leq\Delta_{s+1}$ is preserved. Hence
$f\brbra{\bar y^{S+1}}-f^*\leq\Delta_{S+1}\leq\theta^{S}\Delta_1$.

It remains to bound the cost of each stage. If $t_s=1$, the stage cost is already bounded by an absolute constant times $\bundleSize$. Otherwise, for every pre-exit call $t=1,\ldots,t_s-1$, neither restart condition has occurred; in particular,
\[
f\brbra{\hat x_s^{t+1}}-\underline f^{s}>\theta\Delta_{s},\qquad
\norm{x_s^{t+1}-\bar y^{s}}^2\leq \frac{2\theta\Delta_{s}}{\mu^*},\qquad
\flag=\false.
\]
These inequalities are only asserted for $t=1,\ldots,t_s-1$; at $t=t_s$, one of the restart conditions holds.
For any such pre-exit index $t$, since $\tilde l^{s}=\bar f^{s}-\theta\Delta_{s}$ and
$\underline f^{s}=\bar f^{s}-\Delta_{s}$, the first inequality implies
$f\brbra{\hat x_s^{t_s}}-\tilde l^{s}>\brbra{2\theta-1}\Delta_{s}$.
Applying Proposition~\ref{prop:converge} to this shifted inner sequence yields
\[
\omega_{t}\brbra{2\theta-1}\Delta_{s}
-\omega_1\brbra{1-\frac{\alpha_1}{2}}\theta\Delta_{s}
\leq
\bar L_s(t_s - 1)\norm{x_s^{t_s}-\bar y^{s}}^2.
\]
Using $\omega_1(1-\alpha_1/2)=3$ and the radius bound above, we get
\[
\brbra{\omega_{t}(2\theta-1)-3\theta}\Delta_{s}
\leq
\frac{2\theta\bar L_s(t_s - 1)}{\mu^*}\Delta_{s}.
\]
Using $\omega_t=(t+2)(t+3)/2$ yields
\[
t_s\leq O(1)\sqrt{\frac{\bar L_s(t_s-1)}{\mu^*}}\ .
\]
Since $\theta$ is fixed, each completed stage uses at most
$O(1)\sqrt{L_{\text{avg}}/\mu^*}$ inner iterations, and each inner iteration uses $O(\bundleSize)$ gradient evaluations. The number of completed stages needed to make $\Delta_{S+1}\leq\vep$ is at most $O(1)\log\brbra{2\norm{f^{\prime}\brbra{\bar{y}^{1}}}^{2}/(\mu^{*}\vep)}$, which proves the claimed total complexity.

Finally, the proof of~\eqref{eq:upper_bound_mu_L_bar} follows exactly as in~\eqref{eq:upper_L_s_t_s}. During the pre-exit calls of a stage,
$\brbra{2\theta-1}\Delta_{s}<f\brbra{\hat x_s^{t+1}}-\tilde l^{s}\leq\theta\Delta_{s}$.
Substituting this bound into the same weighted-average argument gives~\eqref{eq:upper_bound_mu_L_bar}.
If $f$ is $\brbra{k,L}$-$\pws$ and $\bundleSize\geq k$, then $\tilde L_{s,t}\leq L$ for all relevant calls to $\onestep$. Therefore $\bar L_s(t_s-1)\leq O(1)L$, so $L_{\text{avg}}\leq O(1)L$, and the last complexity bound follows.
\ifdefined\isarxiv
    \end{proof}
\else
    \qedsymbol\end{proof}
\fi

\section{\protect\label{sec:Restarted_mu_unknown}Restarted $\protect\apex$
for unknown $\mu^{*}$}

In this section, we propose an algorithm that eliminates the requirement for
a priori knowledge of the QG modulus $\mu^{*}$. Algorithm~\ref{alg:mu_unknown}
maintains a single estimate $\hat{\mu}_{s}>0$, which induces the gap estimate
$\tilde{\Delta}_{s}$, and checks this estimate through a guess-and-check mechanism.
Proposition~\ref{prop:generate_W_certificate} justifies the certificate
interpretation of the recorded pair $(\Delta_s,\mu_s)$: at an accepted stage,
it gives a
$\brbra{\sqrt{2\Delta_s/\mu_s},\sqrt{\mu_s\Delta_s/2}}$-$\Wcer$ for
$\bar y^s$. The unified refresh step at Line~\ref{enu:refresh_mu_unknown}
records $(\Delta_s,\mu_s)=(\theta\tilde{\Delta}_s,\hat{\mu}_s)$ when
$\lowerflag=\true$, and
$(\Delta_s,\mu_s)=((1+\beta)\tilde{\Delta}_s,\hat{\mu}_s)$ when
$\lowerflag=\false$. These values are used later when
Line~\ref{enu:mu_unknown_half_rho} recomputes $\tilde{\Delta}_s$ after rejecting
the current QG estimate.
\begin{algorithm}
\caption{Accelerated Gap Reduction, $\protect\mcal{AGR}\protect\brbra{\bar{y},\Delta,\mu,\bundleSize,\theta}$}
\label{alg:AGR}
\begin{algorithmic}[1]
\State \textbf{Initialize:} $\tilde{l}=f\brbra{\bar{y}}-\theta\Delta,x^{1}=\bar{y},\hat{x}^{1}=\bar{y},\underline{f} = f(\bar{y})-\Delta$
\For{$t=1,\ldots$}
    \State $\brbra{\hat{x}^{t
+1},x^{t+1},\text{\textbf{Flag}},L_{t},\norm{x^{t,l_{t}}-x^{t,r_{t}}}}\leftarrow\onestep\brbra{\bar{y},\hat{x}^{t},x^{t},\tilde{l}\mid\bundleSize}$
    \State Compute $\bar{L}\brbra t$ (defined in~\eqref{eq:L_N}) based on $\norm{x^{t,l_{t}}-x^{t,r_{t}}}$ \Comment{Just for analysis}\label{enu:compute_L_bar_t}
    \If{$\flag$ is $\true$ or $\norm{x^{t+1}-\bar{y}}^{2}>\frac{2\theta\Delta}{\mu}$}
        \State \label{enu:lower_improve} \textbf{Return} $\brbra{\hat{x}^{t+1},\true}$

    \ElsIf{$f\brbra{\hat{x}^{t+1}}-\underline{f}\leq\theta\Delta$}
        \State \label{enu:prox_center_change} \textbf{Return} $\brbra{\hat{x}^{t+1},\false}$
    \EndIf
\EndFor
\end{algorithmic}
\end{algorithm}

Each outer stage first calls $\mcal{AGR}$ (Algorithm~\ref{alg:AGR}) at the
current reference point $\bar y^{s}$ at
Line~\ref{enu:improve_gap_reduction_mu_unknown}. If $\lowerflag=\false$, the
algorithm enters the validation branch. It uses the returned point $\hat{x}^{s}$
to test the current lower bound: if
$f(\hat{x}^{s})<f(\bar y^{s})-\tilde{\Delta}_{s}$, then the current QG estimate
is rejected. Otherwise, Line~\ref{enu:W-cer-gen_mu_unknown} calls $\mcal{AWG}$
(Algorithm~\ref{alg:AWG}) to perform the certificate generation at $\bar y^{s}$ and
stores the result in $\quarflag$. Line~\ref{enu:mu_unknown_quartering_test}
then reduces the estimate if $\quarflag=\false$ or if the direct test has
already rejected the estimate. In that case, Line~\ref{enu:mu_unknown_half_rho}
quarters $\hat{\mu}_{s}$, recomputes $\tilde{\Delta}_{s}$ as the smaller gap
bound obtained from the previous accepted certificate and the local gradient
bound, and returns to Line~\ref{enu:improve_gap_reduction_mu_unknown}. If the
validation branch does not reject the estimate, or if $\lowerflag=\true$ from
the start, the unified refresh step at Line~\ref{enu:refresh_mu_unknown}
accepts $\hat{x}^{s}$ as the next reference point, keeps the same QG estimate,
and updates $\tilde{\Delta}_{s+1}$ and $(\Delta_s,\mu_s)$ by the displayed case
distinction.

\begin{algorithm}
\caption{$\protect\rapex$ for unknown $\mu^*$}
\label{alg:mu_unknown}
\begin{algorithmic}[1]
\State \textbf{Input:} objective function $f$, the initial point $\bar{y}^{1}$, initial QG-modulus estimate $\hat{\mu}_{1}$, the number of cuts $\bundleSize$, shrink factor $\theta\in\brbra{\frac{1}{2},1}$, extra factor $\beta>0$
\State \textbf{Initialize:} $\tilde{\Delta}_{1}=\frac{2\norm{f^{\prime}\brbra{\bar{y}^{1}}}^{2}}{\hat{\mu}_{1}},\Delta_0=+\infty,\mu_0=1$
\For{$s=1,\ldots$}
    \State \label{enu:improve_gap_reduction_mu_unknown} $\brbra{\hat{x}^{s},\lowerflag}\leftarrow\mcal{AGR}\brbra{\bar{y}^{s},\tilde{\Delta}_{s},\hat{\mu}_{s}\mid\bundleSize,\theta}$
    \If{$\lowerflag$ is $\false$}\vspace{-1ex}
        \Statex {\footnotesize\dotfill\ \textit{No certificate by $\mcal{AGR}$: run $\mcal{AWG}$ and validate $\hat{\mu}_s$}}\vspace{-1ex}
        \State $\quarflag=\true$
        \If{$f\brbra{\hat{x}^s}\geq f(\bar{y}^{s}) - \tilde{\Delta}_{s}$}\label{enu:AWG_candidate_check_mu_unknown}
            \State \label{enu:W-cer-gen_mu_unknown} ${\quarflag}\leftarrow\mcal{AWG}\brbra{\bar{y}^{s},\tilde{\Delta}_{s},\sqrt{\frac{2\brbra{1+\beta}\tilde{\Delta}_{s}}{\hat{\mu}_{s}}}\mid \bundleSize,\beta}$
        \EndIf
        \If{$\quarflag$ is $\false$ or $f\brbra{\hat{x}^s} < f(\bar{y}^{s}) - \tilde{\Delta}_{s}$}\label{enu:mu_unknown_quartering_test}
            \State \label{enu:mu_unknown_half_rho} $\hat{\mu}_{s}=\hat{\mu}_{s}/4\ ,\ \tilde{\Delta}_{s}=\min\bcbra{\frac{\mu_{s-1}}{\hat{\mu}_{s}}\Delta_{s-1},\frac{2}{\hat{\mu}_{s}}\norm{f^{\prime}\brbra{\bar{y}^{s}}}^{2}}$ and go to Line~\ref{enu:improve_gap_reduction_mu_unknown}
        \EndIf
    \EndIf \vspace{-0.8ex}
    \Statex {\footnotesize\dotfill\ \textit{Use current best solution as next candidate; update certificate}}\vspace{-0.5ex}
    \State \label{enu:refresh_mu_unknown}\label{enu:end_mu_unknown}\label{enu:lower_bound_improve} $\bar{y}^{s+1}=\hat{x}^{s}$, $\hat{\mu}_{s+1}=\hat{\mu}_{s}$ and set
    {\footnotesize\begin{equation*}
    \tilde{\Delta}_{s+1}=\protect\begin{cases}
    f\protect\brbra{\bar{y}^{s+1}}-\protect\brbra{f\protect\brbra{\bar{y}^{s}}-\theta\tilde{\Delta}_{s}} & \lowerflag\ \text{is}\ \true,\\
    f\protect\brbra{\bar{y}^{s+1}}-\protect\brbra{f\protect\brbra{\bar{y}^{s}}-\tilde{\Delta}_{s}} & \lowerflag\ \text{is}\ \false,
    \protect\end{cases}
    \end{equation*}}
    {\footnotesize\begin{equation*}
    \Delta_{s}=\protect\begin{cases}
    \theta\tilde{\Delta}_{s} & \lowerflag\ \text{is}\ \true,\\
    \brbra{1+\beta}\tilde{\Delta}_{s} & \lowerflag\ \text{is}\ \false,
    \protect\end{cases}
    \qquad
    \mu_{s}=\hat{\mu}_{s}.
    \end{equation*}}
\EndFor
\end{algorithmic}
\end{algorithm}

The reduction step also reuses past certificates. When
Algorithm~\ref{alg:mu_unknown} enters Line~\ref{enu:mu_unknown_half_rho} at
stage $s>1$, the most recent certificate for $\bar y^{s-1}$ yields, under the
new QG estimate $\hat{\mu}_s$, the lower bound
$f(\bar y^{s-1})-(\mu_{s-1}/\hat{\mu}_s)\Delta_{s-1}$ on $f^*$ by
Proposition~\ref{prop:W_certificate_lower_bound}. Since the accepted incumbent
satisfies $f(\bar y^s)\leq f(\bar y^{s-1})$, the same lower bound is valid for
the current reference point $\bar y^s$. This bound can be sharper than the
coarse gradient bound
$f\brbra{\bar{y}^{s}}-2\norm{f^{\prime}\brbra{\bar{y}^{s}}}^{2}/\hat{\mu}_{s}$,
which is why the algorithm can reduce $\hat{\mu}_{s}$ without restarting the
lower-bound estimate from scratch.

We first bound the number of iterations required by the subroutine $\mcal{AGR}$ in Proposition~\ref{prop:AGR_convergence}.
\begin{proposition}\label{prop:AGR_convergence}
Let the weighted sequence $\omega_{t}=(t+2)(t+3)/2$,
$\alpha_{t}=4/(t+3)$ and $\theta\in(1/2,1)$ in Algorithm~\ref{alg:AGR}.
Suppose Algorithm~\ref{alg:AGR} returns at $t_{\mcal G}$-th iteration.
Then the weighted mean of Lipschitz constants  $\bar{L}\brbra{t_{\mcal G}-1}$
and the iteration number $t_{\mcal G}$ admit the following bound:
\begin{equation}\label{eq:res_AGR_converge}
\bar{L}\brbra{t_{\mcal G} - 1}\leq\frac{12\theta}{2\theta-1}\cdot\frac{\sum_{t=1}^{t_{\mcal G} - 1}\onebf_{\mcal E\brbra t}(t+3)}{\sum_{t=1}^{t_{\mcal G}-1}\brbra{\onebf_{\mcal E\brbra t}(t+3)/\tilde{L}_{t}}}\,\,  \text{and}\,\,\  t_{\mcal G}\leq\min\bcbra{t:t>\left\lceil \sqrt{\frac{6\theta+4\theta\bar{L}\brbra t/\mu}{2\theta-1}}\right\rceil }\ ,
\end{equation}
where $\tilde{L}_t$ is local empirical Lipschitz constant defined in~\eqref{eq:defn_tilde_L_t} for $\onestep$ and $\mcal E(t)$ is defined in~\eqref{eq:event_E_t}.
\end{proposition}
\noindent\emph{Proof deferred.} The detailed proof is collected in Subsection~\ref{subsec:unknown-mu-detailed-proofs}.

Thus, Proposition~\ref{prop:AGR_convergence} shows that the iteration bound for $\mcal{AGR}$ depends on the effective condition number $\sqrt{\bar{L}\brbra{t}/\mu}$ in~\eqref{eq:res_AGR_converge}.
Now, we are ready to give the formal statement of convergence result
of Algorithm~\ref{alg:mu_unknown}.
\begin{thm}\label{thm:mu_unknown}Suppose $f$ satisfies $\mu^{*}$-QG.
    In Algorithm~\ref{alg:mu_unknown}, assume $\hat{\mu}_{1}\geq\mu^{*}$. Set $\omega_t=(t+2)(t+3)/2,\alpha_t=4/(t+3)$ in subroutine $\mcal{AWG}$ and $\mcal{AGR}$. To find a $\brbra{2\vep/\mu_{S},\vep}$-$\Wcer$
    for point $\bar{y}^{S}$ in function $f$, Algorithm~\ref{alg:mu_unknown}
    requires a total number of first-order oracle calls bounded by:
    \begin{equation}
    O(1)\cdot \bundleSize \max\bcbra{\sqrt{\frac{{L}_{\text{avg}}}{\mu^{*}}},1}\brbra{\log_{4}\frac{4\hat{\mu}_{1}}{\mu^{*}}+\log\frac{\norm{f^{\prime}\brbra{\bar{y}^{1}}}^{2}}{\vep^{2}}}\ ,
    \end{equation}
{where ${L}_{\text{avg}}$ is a uniform upper bound of weighted harmonic mean of empirical Lipschitz constants of $\mcal{AWG}$ (defined in~\eqref{eq:weighted_harmonic_mean_AWG}) and  $\mcal{AGR}$ (defined in~\eqref{eq:res_AGR_converge}) before obtaining the final certificate.}
Furthermore, if $f$ is $\brbra{k,L}-\pws$ and $\bundleSize \geq k$, then we have ${L}_{\text{avg}}\leq O(1)L$.
    \end{thm}
\noindent\emph{Proof deferred.} The detailed proof is collected in Subsection~\ref{subsec:unknown-mu-detailed-proofs}.

The complexity in Theorem~\ref{thm:mu_unknown} is established
for an $\brbra{2\vep/\mu_{s},\vep}$\textendash $\Wcer$. The $\Wcer$
is verifiable during the algorithmic process, at which stage $s$ the $\Wcer$
is guaranteed to be generated correctly. By applying the same proof
technique, we obtain the complexity result for the optimality gap,
stated in the following Theorem.
\begin{thm}
\label{thm:unknown_QG_opt_Gap}Suppose $f$ satisfies $\mu^{*}$-QG. In Algorithm~\ref{alg:mu_unknown},
assume $\hat{\mu}_{1}\geq\mu^{*}$. Set $\omega_t = (t+2)(t+3)/2,\alpha_t = 4/(t+3)$ in subroutine $\mcal{AWG}$ and $\mcal{AGR}$.
To find a solution $\bar{y}^{S+1}$ such that $f\brbra{\bar{y}^{S+1}}-f^{*}\leq\vep$,
Algorithm~\ref{alg:mu_unknown} requires a total number of gradient
oracle calls bounded by:
\begin{equation}
O(1)\cdot \bundleSize\max\bcbra{\sqrt{\frac{L_{\text{avg}}}{\mu^{*}}},1}\brbra{\log_{4}\frac{4\hat{\mu}_{1}}{\mu^{*}}+\log\frac{\norm{f^{\prime}\brbra{\bar{y}^{1}}}^{2}}{\mu^{*}\vep}}\ ,
\end{equation}
where $L_{\text{avg}}$ is a uniform upper bound of weighted harmonic mean of empirical Lipschitz constants of $\mcal{AWG}$ (defined in~\eqref{eq:weighted_harmonic_mean_AWG}) and  $\mcal{AGR}$ (defined in~\eqref{eq:res_AGR_converge}) before obtaining the $\vep$-optimal solution. Furthermore, if $f$ is $(k,L)-\pws$ and $B\geq k$, then we have $L_{\text{avg}}\leq O(1)L$.
\end{thm}
\noindent\emph{Proof deferred.} The detailed proof is collected in Subsection~\ref{subsec:unknown-mu-detailed-proofs}.

We make a few remarks on the complexity bounds in
Theorems~\ref{thm:mu_unknown} and~\ref{thm:unknown_QG_opt_Gap}. First,
both theorems assume $\hat{\mu}_{1}\geq\mu^{*}$. Such an initial upper
estimate can be obtained by choosing two points $x_1$ and $x_2$ with
$f(x_2)<f(x_1)$ and setting
$\hat{\mu}_1 = 2\norm{f^\prime(x_1)}^2/(f(x_1)-f(x_2))$, since
$f(x_1)-f(x_2)\leq f(x_1)-f^*\leq 2\norm{f^\prime(x_1)}^2/\mu^*$.
Second, if $\bundleSize\geq k$, then $L_{\text{avg}}\leq O(1)L$, and
the bounds inherit the accelerated dependence
$\max\bcbra{\sqrt{L/\mu^{*}},1}$. For the optimality-gap guarantee in
Theorem~\ref{thm:unknown_QG_opt_Gap}, the leading term is
\[
O(1)\cdot \bundleSize \max\bcbra{\sqrt{\frac{L}{\mu^{*}}},1}
\log\brbra{\frac{\norm{f^{\prime}\brbra{\bar{y}^{1}}}^{2}}{\mu^{*}\vep}}.
\]
The certificate guarantee in Theorem~\ref{thm:mu_unknown} has the same
condition-number dependence, with its logarithmic factor given in the
theorem statement. Consequently, up to additional logarithmic terms, these
bounds match the rates obtained by the known-$\mu^{*}$ restarted scheme in
Theorem~\ref{thm:mu_known} and the idealized known-$f^{*}$ scheme in
Theorem~\ref{thm:complexity_idealized_apex}, while requiring neither
$\mu^{*}$ nor $f^{*}$ as input. Finally, Algorithm~\ref{alg:mu_unknown} is
anytime: it does not require the target accuracy $\vep$ in advance.

\subsection{Detailed proofs\label{subsec:unknown-mu-detailed-proofs}}

We close this section by collecting the technical proofs for the unknown-QG
analysis in the order used by the argument. Proposition~\ref{prop:AGR_convergence}
first bounds the number of iterations required by $\mcal{AGR}$. We then prove
Proposition~\ref{prop:generate_W_certificate} and Lemma~\ref{lem:lower_bound_valid},
which establish certificate generation and the validity of the maintained lower
bound. Proposition~\ref{prop:guess_wrong_mu_s} bounds the number of
QG-estimate reductions. These ingredients yield the convergence guarantees in
Theorems~\ref{thm:mu_unknown} and~\ref{thm:unknown_QG_opt_Gap}.

\begin{proof}[\underline{Proof of Proposition~\ref{prop:AGR_convergence}}]
Algorithm~\ref{alg:AGR} can be viewed as a specific instance of APEX, where the parameters are set as $\tilde{l} = f\brbra{\bar{y}} - \theta \Delta$, $\alpha_t = 4/(t+3)$, and $\omega_t = (t+2)(t+3)/2$.
From Proposition~\ref{prop:converge}, we have
\[
\frac{1}{2}(t+2)(t+3)\brbra{f\brbra{\hat{x}^{t+1}}-\brbra{f\brbra{\bar{y}}-\theta\Delta}}-3\theta\Delta\leq\bar{L}\brbra t\norm{x^{t+1}-\bar{y}}^{2}\ .
\]
If Algorithm~\ref{alg:AGR} has not returned by iteration $t$, then
$f(\hat{x}^{t+1})>\theta\Delta+\underline f=f(\bar y)-(1-\theta)\Delta$ and
$\norm{x^{t+1}-\bar y}^{2}\leq2\theta\Delta/\mu$. Therefore,
\[
\frac{1}{2}(t+2)(t+3)(2\theta-1)\Delta - 3\theta\Delta \leq \bar{L}(t)\norm{x^{t+1}-\bar{y}}^2.
\]
It follows that, once
$t>\sqrt{\brbra{4\theta\,\bar{L}(t)/\mu+6\theta}/(2\theta-1)}$, the inequality
$\norm{x^{t+1}-\bar y}^{2}\leq2\theta\Delta/\mu$ cannot hold. Hence
Algorithm~\ref{alg:AGR} must have already returned or must return at iteration
$t$, establishing the second claimed result in~\eqref{eq:res_AGR_converge}. By
a similar procedure for~\eqref{eq:upper_bound_mu_L_bar}, we have the first
result in~\eqref{eq:res_AGR_converge}.
\ifdefined\isarxiv
\end{proof}
\else
\qedsymbol\end{proof}
\fi

\begin{proposition}
\label{prop:generate_W_certificate}
In Algorithm~\ref{alg:mu_unknown}, whenever Line~\ref{enu:refresh_mu_unknown}
is reached, a $\Wcer$ is available in either of the following two cases.
\begin{enumerate}
\item If $\lowerflag=\false$, then the call to $\mcal{AWG}$ at
Line~\ref{enu:W-cer-gen_mu_unknown} returns $\quarflag=\true$, and there
exists a point set $\mcal P_s$ that forms a $\Wcer$ for $\bar{y}^{s}$ with
parameters
\[
\brbra{\sqrt{2(1+\beta)\tilde{\Delta}_{s}/\hat{\mu}_{s}},
\sqrt{\hat{\mu}_{s}(1+\beta)\tilde{\Delta}_{s}/2}}.
\]
\item If $\lowerflag=\true$, then there exists a point set $\mcal P_s$ that
forms a $\Wcer$ for $\bar{y}^{s}$ with parameters
\[
\brbra{\sqrt{2\Delta_{s}/\mu_{s}},\sqrt{\mu_{s}\Delta_{s}/2}},
\]
where $\Delta_s=\theta\tilde{\Delta}_s$ and
$\mu_s=\hat{\mu}_s$ are the values assigned at Line~\ref{enu:refresh_mu_unknown}.
\end{enumerate}
\end{proposition}

\begin{proof}
We verify the two cases in the proposition. First, if $\lowerflag=\false$ and
Line~\ref{enu:refresh_mu_unknown} is reached, then
Line~\ref{enu:mu_unknown_quartering_test} did not trigger. Hence the call to
$\mcal{AWG}$ at Line~\ref{enu:W-cer-gen_mu_unknown} returned
$\quarflag=\true$. This call uses $\Delta=\tilde{\Delta}_s$ and
$\iota_{\max}=\sqrt{2(1+\beta)\tilde{\Delta}_s/\hat{\mu}_s}$, so
Algorithm~\ref{alg:AWG} has reached its certificate-return line. The certificate part of
Proposition~\ref{prop:AWG_W_cer_gen} therefore gives a point set $\mcal P_s$
that forms a
$\brbra{\sqrt{2(1+\beta)\tilde{\Delta}_{s}/\hat{\mu}_{s}},
\sqrt{\hat{\mu}_{s}(1+\beta)\tilde{\Delta}_{s}/2}}$-$\Wcer$
for $\bar y^s$.

Second, if $\lowerflag=\true$, then the lower-bound trigger in $\mcal{AGR}$ uses the
level $f(\bar y^s)-\theta\tilde{\Delta}_s$ and the radius
$\sqrt{2\theta\tilde{\Delta}_s/\hat{\mu}_s}$. The same
certificate-generation argument as in Proposition~\ref{prop:AWG_W_cer_gen}
then gives a point set $\mcal P_s$ that forms a
$\brbra{\sqrt{2\theta\tilde{\Delta}_{s}/\hat{\mu}_{s}},
\sqrt{\hat{\mu}_{s}\theta\tilde{\Delta}_{s}/2}}$-$\Wcer$
for $\bar y^s$. Since Line~\ref{enu:refresh_mu_unknown} sets
$\Delta_s=\theta\tilde{\Delta}_s$ and $\mu_s=\hat{\mu}_s$, this is exactly the
certificate stated in the proposition.
\ifdefined\isarxiv
    \end{proof}
\else
    \qedsymbol\end{proof}
\fi

\begin{lem}
\label{lem:lower_bound_valid}
At any stage $s$ of Algorithm~\ref{alg:mu_unknown}, suppose that $f$ satisfies
the $\hat{\mu}_{s}$-QG condition for the current estimate $\hat{\mu}_{s}$. Then
$\tilde{\Delta}_{s}$ satisfies the following inequality:
\begin{equation}\label{eq:lower_bound_effective}
    f\brbra{\bar y^s}-\tilde{\Delta}_s\leq f^*.
\end{equation}
\end{lem}

\begin{proof}
First consider the initialization. By convexity, for any
$x^*\in X^*$,
\[
f\brbra{\bar y^1}-f^*
\leq
\inner{f'\brbra{\bar y^1}}{\bar y^1-x^*}
\leq
\norm{f'\brbra{\bar y^1}}\norm{\bar y^1-x^*}.
\]
Taking the infimum over $x^*\in X^*$ gives
$f\brbra{\bar y^1}-f^*
\leq\norm{f'\brbra{\bar y^1}}\dist\brbra{\bar y^1,X^*}$.
Since $f$ satisfies the $\hat{\mu}_1$-QG condition,
\[
\dist\brbra{\bar y^1,X^*}
\leq
\sqrt{\frac{2\brbra{f\brbra{\bar y^1}-f^*}}{\hat{\mu}_1}}.
\]
Thus either $f\brbra{\bar y^1}=f^*$, or
\[
f\brbra{\bar y^1}-f^*
\leq
\frac{2\norm{f'\brbra{\bar y^1}}^2}{\hat{\mu}_1}
=\tilde{\Delta}_1.
\]

Assume that~\eqref{eq:lower_bound_effective} holds at the beginning of stage
$s$. If the algorithm reaches Line~\ref{enu:refresh_mu_unknown} with
$\lowerflag=\false$, then
\[
\tilde{\Delta}_{s+1}
=
f\brbra{\bar y^{s+1}}
-\brbra{f\brbra{\bar y^s}-\tilde{\Delta}_s}.
\]
The induction hypothesis gives
$f\brbra{\bar y^s}-\tilde{\Delta}_s\leq f^*$, and therefore
$f\brbra{\bar y^{s+1}}-f^*\leq\tilde{\Delta}_{s+1}$.

If the algorithm reaches Line~\ref{enu:refresh_mu_unknown} with
$\lowerflag=\true$, then Proposition~\ref{prop:generate_W_certificate} provides
a $\Wcer$ for $\bar y^s$ with parameters
\[
\brbra{\sqrt{2\Delta_s/\mu_s},\sqrt{\mu_s\Delta_s/2}},
\]
where
$\Delta_s=\theta\tilde{\Delta}_s$ and $\mu_s=\hat{\mu}_s$. Applying
Proposition~\ref{prop:W_certificate_lower_bound} yields
\[
f\brbra{\bar y^s}-f^*
\leq
\max\bcbra{
\Delta_s,\frac{2}{\hat{\mu}_s}\cdot\frac{\mu_s\Delta_s}{2}}
=\Delta_s.
\]
Thus $f\brbra{\bar y^s}-\theta\tilde{\Delta}_s\leq f^*$. Since
Line~\ref{enu:refresh_mu_unknown} sets
\[
\tilde{\Delta}_{s+1}
=
f\brbra{\bar y^{s+1}}
-\brbra{f\brbra{\bar y^s}-\theta\tilde{\Delta}_s},
\]
we again obtain
$f\brbra{\bar y^{s+1}}-f^*\leq\tilde{\Delta}_{s+1}$.

It remains to check Line~\ref{enu:mu_unknown_half_rho}. The reference point
$\bar y^s$ is unchanged, while the estimate $\hat{\mu}_s$ is quartered. Write
$\hat{\mu}_s$ for the new value after this update. The gradient candidate is
valid by the same convexity--QG argument used at initialization:
\[
f\brbra{\bar y^s}-f^*
\leq
\frac{2\norm{f'\brbra{\bar y^s}}^2}{\hat{\mu}_s}.
\]
For $s\geq2$, we also use the certificate generated at the previous accepted
stage. By Proposition~\ref{prop:generate_W_certificate}, there is an
$\brbra{\iota_{s-1},\nu_{s-1}}$-$\Wcer$ for $\bar y^{s-1}$ with
\[
\iota_{s-1}=\sqrt{\frac{2\Delta_{s-1}}{\mu_{s-1}}},\qquad
\nu_{s-1}=\sqrt{\frac{\mu_{s-1}\Delta_{s-1}}{2}}.
\]
Applying Proposition~\ref{prop:W_certificate_lower_bound} to this certificate
with the new QG estimate $\hat{\mu}_s$ gives
\[
f\brbra{\bar y^{s-1}}-f^*
\leq
\max\bcbra{\iota_{s-1}\nu_{s-1},
\frac{2\nu_{s-1}^2}{\hat{\mu}_s}}
=
\frac{\mu_{s-1}}{\hat{\mu}_s}\Delta_{s-1},
\]
where the equality uses $\iota_{s-1}\nu_{s-1}=\Delta_{s-1}$,
$2\nu_{s-1}^2/\hat{\mu}_s=(\mu_{s-1}/\hat{\mu}_s)\Delta_{s-1}$,
and $\hat{\mu}_s\leq\mu_{s-1}$ after quartering. Equivalently,
\[
f\brbra{\bar y^{s-1}}-\frac{\mu_{s-1}}{\hat{\mu}_s}\Delta_{s-1}\leq f^*.
\]
Since the incumbent-selection rule in Algorithm~\ref{alg:One-step} gives
$f\brbra{\bar y^s}\leq f\brbra{\bar y^{s-1}}$, this lower bound remains valid
for the current reference point:
\[
f\brbra{\bar y^s}-\frac{\mu_{s-1}}{\hat{\mu}_s}\Delta_{s-1}
\leq
f\brbra{\bar y^{s-1}}-\frac{\mu_{s-1}}{\hat{\mu}_s}\Delta_{s-1}
\leq f^*.
\]
Thus
\[
f\brbra{\bar y^s}-f^*
\leq
\frac{\mu_{s-1}}{\hat{\mu}_s}\Delta_{s-1}.
\]
Therefore both candidates in the minimum defining the new
$\tilde{\Delta}_s$ are valid upper bounds on the current optimality gap
(with $\Delta_0=+\infty$ making the first candidate inactive when $s=1$).
Their minimum is also a valid upper bound, so the invariant is preserved after
quartering.

All possible updates preserve
$f\brbra{\bar y^s}-f^*\leq\tilde{\Delta}_s$, which completes the induction.
\ifdefined\isarxiv
    \end{proof}
\else
    \qedsymbol\end{proof}
\fi

\begin{proposition}
\label{prop:guess_wrong_mu_s}
Suppose that $f$ is $\mu^*$-QG and $\hat{\mu}_1\geq\mu^*$. Whenever
Algorithm~\ref{alg:mu_unknown} reaches Line~\ref{enu:mu_unknown_half_rho} at
stage $s$, its pre-update estimate satisfies $\hat{\mu}_s>\mu^*$. Hence every
updated QG estimate is at least $\mu^*/4$, and
Line~\ref{enu:mu_unknown_half_rho} is reached at most
$\left\lceil\log_4\brbra{\hat{\mu}_1/\mu^*}\right\rceil$ times.
\end{proposition}

\begin{proof}
We prove the first claim by contraposition. Fix stage $s$ and suppose
$\hat{\mu}_s\leq\mu^*$ before Line~\ref{enu:mu_unknown_half_rho}. Then $f$ is
also $\hat{\mu}_s$-QG, so Lemma~\ref{lem:lower_bound_valid} gives
$f\brbra{\bar y^s}-\tilde{\Delta}_s\leq f^*$. Since $\hat{x}^s\in X$, we have
$f\brbra{\hat{x}^s}\geq f^*\geq f\brbra{\bar y^s}-\tilde{\Delta}_s$, and the
direct rejection test in Line~\ref{enu:mu_unknown_quartering_test} fails. Thus
Algorithm~\ref{alg:mu_unknown} calls $\mcal{AWG}$ at
Line~\ref{enu:W-cer-gen_mu_unknown}. By Proposition~\ref{prop:AWG_W_cer_gen}
with $\Delta=\tilde{\Delta}_s$ and
$\iota_{\max}=\sqrt{2(1+\beta)\tilde{\Delta}_s/\hat{\mu}_s}$, this call
returns through the certificate line of Algorithm~\ref{alg:AWG}; hence
$\quarflag=\true$. Both disjuncts in Line~\ref{enu:mu_unknown_quartering_test} are
therefore false, so the line cannot be reached. This proves the contrapositive.

Each time Line~\ref{enu:mu_unknown_half_rho} is reached, the pre-update estimate
is larger than $\mu^*$ and is divided by $4$; the updated estimate is therefore
larger than $\mu^*/4$. Starting from $\hat{\mu}_1\geq\mu^*$, this can occur at
most $\left\lceil\log_4\brbra{\hat{\mu}_1/\mu^*}\right\rceil$ times.

\ifdefined\isarxiv
    \end{proof}
\else
    \qedsymbol\end{proof}
\fi

\begin{proof}[\underline{Proof of Theorem~\ref{thm:mu_unknown}}]
    We first introduce notation for the QG estimate adjustment and derive
a sufficient condition for obtaining
the target certificate. We then bound the cost of a single call to
$\mcal{AWG}$ or $\mcal{AGR}$ and decompose the total complexity
into four terms: $S_{1}\brbra{\vep}$ for uncertified QG estimates,
$S_{2}\brbra{\vep}$ for the initial phases, $S_{3}\brbra{\vep}$
for the terminal phase, and $S_{4}\brbra{\vep}$ for certified intermediate
phases.

\textbf{Notation clarification.} To simplify the discussion, let $q$ index
the QG-modulus estimates $\hat{\mu}_{1}/4^q$ considered by the algorithm, where
$0\leq q\leq\log_{4}\frac{4\hat{\mu}_{1}}{\mu^{*}}$.
Let $\tilde{\Delta}_{(1)}^{(q)}$ and $\tilde{\Delta}_{(2)}^{(q)}$
denote the values of $\tilde{\Delta}_{s}$ immediately before and
after the $q$-th quartering of the QG estimates. Similar notations
are adopted for $\hat{\mu}_{(1)}^{(q)}$ and $\hat{\mu}_{(2)}^{(q)}$.

\textbf{Sufficient condition for target certificate. }By Proposition~\ref{prop:generate_W_certificate},
whenever Algorithm~\ref{alg:mu_unknown} reaches
Line~\ref{enu:refresh_mu_unknown}, there exists a
$\brbra{\sqrt{2\delta_s/\mu_s},\sqrt{\mu_s\delta_s/2}}$-$\Wcer$
for $\bar{y}^{s}$ with $\delta_s\leq(1+\beta)\tilde{\Delta}_{s}$.
Hence, to obtain an $\brbra{2\vep/\mu_{s},\vep}$-$\Wcer$
for $\bar{y}^{s}$, it suffices to enforce $\mu_{s}\delta_s\leq2\vep^{2}$.
Since $\delta_s\leq\brbra{1+\beta}\tilde{\Delta}_{s}$, a sufficient
condition for target certificate is
\begin{equation}
\tilde{\Delta}_{s}\leq\frac{2\vep^{2}}{\hat{\mu}_{s}\brbra{1+\beta}}=\frac{2\vep^{2}}{\mu_{s}\brbra{1+\beta}}\ .\label{eq:suff_eps}
\end{equation}

\textbf{Cost for single call to $\mcal{AWG}$ or $\mcal{AGR}$.} By
Proposition~\ref{prop:AWG_W_cer_gen} with $\iota_{\max}=\sqrt{\brbra{2\brbra{1+\beta}\tilde{\Delta}_{s}}/\hat{\mu}_{s}}$,
together with result in Proposition~\ref{prop:AGR_convergence}, we
have no matter $\mcal{AWG}$ or $\mcal{AGR}$, it will terminate at most $O(1)\bundleSize \sqrt{\bar{L}\brbra t/\hat{\mu}_{s}}$ gradient evaluations. Since $\bar{L}(t)$ is upper bounded by the weighted harmonic mean of empirical Lipschitz constants of $\mcal{AWG}$ and $\mcal{AGR}$ and $L_{\text{avg}}$ is the uniform upper bound of it in the whole run, then we have for $\mcal{AGR}$ or $\mcal{AWG}$, it will terminate at most $O(1)\bundleSize \sqrt{L_{\text{avg}}/\hat{\mu}_{s}}$ gradient evaluations.

\textbf{Cost for uncertified QG estimate $S_1(\vep)$.} For each QG estimation $\hat{\mu}_1/4^q,0\leq q\leq \log_{4}\frac{4\hat{\mu}_{1}}{\mu^{*}}$, $\mcal{AWG}$ uses at most $O(1)\bundleSize\sqrt{L_{\text{avg}}4^q/\hat{\mu}_1}$ gradient evaluations before either generating a certificate or triggering a QG adjustment. Hence, the total cost over all uncertified QG estimates is bounded by
\begin{equation}\label{eq:S_1_vep_upper_bound}
S_{1}\brbra{\vep}\leq\bundleSize\sum_{q=0}^{\log_{4}\frac{4\hat{\mu}_{1}}{\mu^{*}}}\sqrt{\frac{L_{\text{avg}}4^{q}}{\hat{\mu}_{1}}}\leq O(1)\bundleSize\sqrt{\frac{L_{\text{avg}}}{\mu^{*}}}\ .
\end{equation}

\textbf{Cost from initialization to the first QG adjustment $\brbra{S_{2}\brbra{\vep}}$.
}Suppose the algorithm enters Line~\ref{enu:mu_unknown_half_rho}
for the first time after reaching the gap value $\tilde{\Delta}_{(1)}^{(1)}$.
Up to that moment, it always works with the initial QG estimate
$\hat{\mu}_{1}$. Since $\tilde{\Delta}_{1}\leq2\norm{f^{\prime}\brbra{\bar{y}^{1}}}^{2}/\hat{\mu}_{1}$
and $\tilde{\Delta}^{(1)}_{(1)}\geq\frac{2\vep^{2}}{\hat{\mu}_{1}\brbra{1+\beta}}$
before sufficient condition holds, an upper bound of $S_{2}\brbra{\vep}$
is given by
\begin{equation}\label{eq:S_2_vep_upper_bound}
S_{2}\brbra{\vep}\leq O(1)\cdot\bundleSize\sqrt{\frac{L_{\text{avg}}}{\hat{\mu}_{1}}}\log_{\frac{1}{\theta}}\brbra{\frac{\norm{f^{\prime}\brbra{\bar{y}^{1}}}^{2}}{\vep^{2}}}\ .
\end{equation}
The logarithmic factor counts the number of successful stages with the initial
QG estimate: each such stage reduces $\tilde{\Delta}_{s}$ by the factor
$\theta$, until the sufficient condition~\eqref{eq:suff_eps} is met or the
first QG adjustment occurs. This count also bounds the corresponding calls to
$\mcal{AWG}$ and $\mcal{AGR}$ in this phase.

\textbf{Cost from the last QG estimate to the sufficient condition
$\brbra{S_{3}\brbra{\vep}}$.} Let $\hat{\mu}_{1}/4^{q^{*}}$ be the
last QG estimate before the sufficient condition~\eqref{eq:suff_eps}
is met. After this update, the QG estimate is fixed, and each successful
visit to Line~\ref{enu:refresh_mu_unknown} decreases $\tilde{\Delta}_{s}$ by at least the factor
$\theta$. Since $q^{*}\leq\log_{4}\frac{4\hat{\mu}_{1}}{\mu^{*}}$,
we obtain
\begin{equation}\label{eq:S_3_vep_upper_bound}
S_{3}\brbra{\vep}\leq O(1)B\sqrt{\frac{L_{\text{avg}}}{\mu^{*}}}\brbra{1+\log_{\frac{1}{\theta}}\frac{(1+\beta)\tilde{\Delta}_{(2)}^{(q^{*})}}{\vep^{2}/\hat{\mu}_{1}}\cdot4^{q^{*}}}=O(1)B\sqrt{\frac{L_{\text{avg}}}{\mu^{*}}}\brbra{1+\log_{\frac{1}{\theta}}\frac{\norm{f^{\prime}\brbra{\bar{y}^{1}}}^{2}}{\vep^{2}}}.
\end{equation}

\textbf{Cost for certified QG estimate $\brbra{S_{4}\brbra{\vep}}$.}
Let $q^{*}$ be the realized index of the last QG adjustment before the
sufficient condition~\eqref{eq:suff_eps} is met. By
Proposition~\ref{prop:guess_wrong_mu_s}, $q^{*}\leq
\log_{4}\brbra{4\hat{\mu}_{1}/\mu^{*}}$. Only the intermediate estimates
$\hat{\mu}_{1}/4^q$, $1\leq q\leq q^{*}-1$, contribute to $S_4(\vep)$; the
last estimate $\hat{\mu}_{1}/4^{q^*}$ is counted in $S_3(\vep)$. For each such
$q$, subroutine $\mcal{AGR}$ and $\mcal{AWG}$ are called at most
$2\log_{\frac{1}{\theta}}\brbra{\tilde{\Delta}_{(2)}^{(q)}/\tilde{\Delta}_{(1)}^{(q+1)}}$
times. Hence, with empty sums interpreted as zero, we have
\begin{equation}\label{eq:S_4_vep_raw}
\begin{split}S_{4}\brbra{\vep} & \leq O(1)\sum_{q=1}^{q^*-1}\sqrt{\frac{L_{\text{avg}}}{\hat{\mu}_{1}}\cdot4^{q}}\log_{1/\theta}\brbra{\frac{\tilde{\Delta}_{(2)}^{(q)}}{\tilde{\Delta}_{(1)}^{(q+1)}}}\leq O(1)\sqrt{\frac{L_{\text{avg}}}{\mu^{*}}}\cdot\sum_{q=1}^{q^*-1}\log_{1/\theta}\brbra{\frac{\tilde{\Delta}_{(2)}^{(q)}}{\tilde{\Delta}_{(1)}^{(q+1)}}}\\
 & =O(1)\sqrt{\frac{L_{\text{avg}}}{\mu^{*}}}\cdot\brbra{\log_{1/\theta}\frac{\tilde{\Delta}_{(2)}^{(1)}}{\tilde{\Delta}_{(1)}^{(q^*)}}+\log_{1/\theta}\brbra{\prod_{q=1}^{q^*-2}\frac{\tilde{\Delta}_{(2)}^{(q+1)}}{\tilde{\Delta}_{(1)}^{(q+1)}}}}
\end{split}
\end{equation}
By the update rule, we have
\begin{equation}
\tilde{\Delta}_{(2)}^{(q+1)}\leq\frac{\mu_{(1)}^{(q+1)}}{\mu_{(2)}^{(q+1)}}\brbra{1+\beta}\tilde{\Delta}_{(1)}^{(q+1)}\ ,\ q=1,\ldots,q^*-2
\end{equation}
and
\begin{equation}
\mu_{(1)}^{(q+1)}=\mu_{(2)}^{(q)}\ ,\ q=1,\ldots,q^*-2\ .
\end{equation}
Hence, we have
\begin{equation}\label{eq:upper_bound_product}
\prod_{q=1}^{q^*-2}\frac{\tilde{\Delta}_{(2)}^{(q+1)}}{\tilde{\Delta}_{(1)}^{(q+1)}}\leq\prod_{q=1}^{q^*-2}\frac{(1+\beta)\mu_{(1)}^{(q+1)}}{\mu_{(2)}^{(q+1)}}\leq\brbra{1+\beta}^{q^*}\cdot\frac{\hat{\mu}_{1}}{\mu^{*}}\ .
\end{equation}
By similar proof procedure for $S_{1}\brbra{\vep}$, we have an upper
bound of $\tilde{\Delta}_{(2)}^{(1)}/\tilde{\Delta}_{(1)}^{(q^*)}$:
\begin{equation}\label{eq:factor_upper_bound}
    \frac{\tilde{\Delta}_{(2)}^{(1)}}{\tilde{\Delta}_{(1)}^{(q^*)}}\leq O(1)\brbra{\frac{\norm{f^{\prime}\brbra{\bar{y}^{1}}}^{2}}{\vep^{2}}}\ .
\end{equation}
Combining~\eqref{eq:S_4_vep_raw},~\eqref{eq:upper_bound_product},~\eqref{eq:factor_upper_bound} and $q^* \leq \log_4 \frac{4\hat{\mu}_1}{\mu^*}$ gives
\begin{equation}\label{eq:S_4_vep_upper_bound}
\begin{split}S_{4}\brbra{\vep} & \leq O(1)\sqrt{\frac{L_{\text{avg}}}{\mu^{*}}}\cdot\brbra{\log_{\frac{1}{\theta}}\frac{\norm{f^{\prime}\brbra{\bar{y}^{1}}}^{2}}{\vep^{2}}+q^*+\log_{\frac{1}{\theta}}\brbra{\frac{\hat{\mu}_{1}}{\mu^{*}}}}\\
 & \leq O(1)\sqrt{\frac{L_{\text{avg}}}{\mu^{*}}}\cdot\brbra{\log_{\frac{1}{\theta}}\frac{\norm{f^{\prime}\brbra{\bar{y}^{1}}}^{2}}{\vep^{2}}+\log_{4}\frac{4\hat{\mu}_{1}}{\mu^{*}}}
\end{split}
\ .
\end{equation}

Summing the upper bound of $S_1(\vep), S_2(\vep), S_3(\vep), S_4(\vep)$ in~\eqref{eq:S_1_vep_upper_bound},~\eqref{eq:S_2_vep_upper_bound},~\eqref{eq:S_3_vep_upper_bound},~\eqref{eq:S_4_vep_upper_bound} gives the final result.

\ifdefined\isarxiv
    \end{proof}
\else
    \qedsymbol\end{proof}
\fi

\begin{proof}[\underline{Proof of Theorem~\ref{thm:unknown_QG_opt_Gap}}]
By Proposition~\ref{prop:generate_W_certificate}, at the terminal stage $S$,
Algorithm~\ref{alg:mu_unknown} generates a point set $\mcal P_{S}$
as an $\brbra{\iota_S,\nu_S}$-$\Wcer$ for $f$ at the point $\bar{y}^{S}$, where
\[
\iota_S=\sqrt{2\Delta_{S}/\mu_{S}},
\qquad
\nu_S=\sqrt{\mu_{S}\Delta_{S}/2}.
\]
By Proposition~\ref{prop:W_certificate_lower_bound},
the $\Wcer$ together with $\mu^{*}$-QG implies that $f\brbra{\bar{y}^{S}}-f^{*}\leq\max\bcbra{1,\mu_{S}/\mu^{*}}\Delta_{S}$.
To ensure $f\brbra{\bar{y}^{S}}-f^{*}\leq\vep$, we need $\mu_{S}\Delta_{S}\leq\min\bcbra{\mu_{S},\mu^{*}}\vep$.
Notice that Theorem~\ref{thm:mu_unknown} is to bound
the iteration number for condition $\mu_{S}\Delta_{S}\leq 2\vep^{2}$.
Hence, replacing $2\vep^{2}$ in the proof procedure of Theorem~\ref{thm:mu_unknown}
with ${\min\bcbra{\mu_{S},\mu^{*}}\vep}$ and noting that ${\mu}_{S}\geq\frac{1}{4}\mu^{*}$,
we have the complexity result.
\ifdefined\isarxiv
    \end{proof}
\else
    \qedsymbol\end{proof}
\fi

\section{\protect\label{sec:Numerical-Study}Numerical Study}

In this section, we evaluate our algorithms
on two tasks: minimizing a finite maximum of convex quadratic functions
(Section~\ref{subsec:max_quadratic}) and solving two-stage
stochastic linear programming (Section~\ref{subsec:two_sto_lp}).
All experiments were conducted on a Mac mini M2 Pro with 32 GB of
RAM.

\subsection{\protect\label{subsec:max_quadratic}Finite max of convex quadratic
functions ({\maxquad})}

We focus on the following optimization problem:
\begin{equation}
\min_{x\in\mbb R^{d}}\max_{i\in[k]}\ \ \frac{1}{2}x^{\top}A_{i}x+b_{i}^{\top}x+c_{i}\ ,
\tag{\maxquad}
\end{equation}
where each matrix $A_{i}\in\mbb R^{d\times d}$ is a randomly generated
positive definite matrix, and $\bcbra{b_{i},c_{i}}_{i=1}^{k}$ are
drawn independently from a normal distribution. 
The number of pieces can be determined manually by choosing $k$. Furthermore, we can control the QG modulus (strong convexity in $\maxquad$) by setting the eigenvalues of matrix $A_i$. 
For small-scale instances ($d=500$), each matrix $A_i$ is constructed with eigenvalues following an arithmetic sequence from $\mu$ to $L$ and distinct, randomly generated eigenvectors. For large-scale problems ($d=10{,}000$ or $d=20{,}000$), to save memory, all $A_i$ share a single set of eigenvectors, while the eigenvalues are generated independently for each matrix.

\paragraph{The performance of {\rapex} on {\maxquad}. } 
\ \ 

\textbf{Experiment setting.} We first evaluate the performance of {\rapex} on small-scale {\maxquad} with varying the number of pieces $k$, smoothness $L$ and initial QG estimates $\hat{\mu}_1$, as reported in Table~\ref{tab:com_rapex_max_quad}. In all experiments, we set the dimension ($d$) to $300$, number of cuts ($\bundleSize$) to $50$, the optimal QG modulus ($\mu^*$) to $1.0$, and terminate {\rapex} once {\rapex} improves the upper bound (Algorithm~\ref{alg:mu_unknown} enters Line~\ref{enu:end_mu_unknown}) with $\Delta_s\leq 10^{-6}$. Ipopt~\cite{wachter2006implementation} is used to compute a reference optimal objective value $(f^*)$. 

\textbf{Key observations.} 
Five observations can be drawn from Table~\ref{tab:com_rapex_max_quad}. 
First, the choice of initial $\hat{\mu}_1$ is robust. Across $\hat{\mu}_1$ in $\bcbra{1,10,100,1000}$, the final objective values are nearly identical to the reference result. 
Second, the algorithm does not always recover the true $\mu^*$. Even when the final estimated QG modulus $\hat{\mu}_s$ exceeds $\mu^*$, the objective value remains close to the optimum (the absolute error is on the order of $10^{-6}$).  For instance, when $k=50$, $L=10.0$ and $\hat{\mu}_1=100$, the final QG estimate is $\hat{\mu}_s=25$, much larger than $\mu^*=1.0$, yet the absolute error of the objective is small.  
Third, the iteration trends align with our intuition about how the initial QG estimate affects convergence. A larger initial $\hat{\mu}_1$ leads to more aggressive lower bound updates, which generally results in fewer iterations when no adjustment to the estimate is needed. This is clearly demonstrated when $(k,L)=(200,1000)$, where the final QG estimate $\hat{\mu}_s$ remains equal to the initial $\hat{\mu}_1$ throughout optimization. In these cases, larger initial values consistently reduce the iteration count through more aggressive updates. However, for the remaining cases, while this trend holds for moderate initial estimates ($\hat{\mu}_1\in \bcbra{1,10}$), it breaks down at 
$\hat{\mu}_1\in \bcbra{100,1000}$.
 With such an aggressive initial estimate, the algorithm must spend additional iterations adjusting $\hat{\mu}_1$ downward during optimization, ultimately leading to slower convergence. 
Fourth, a negative reported error of the objective value means that the objective value of {\rapex} is slightly below the reference value, which is attributable to numerical tolerances.
Finally, as highlighted in Proposition~\ref{prop:upper_bound_LbarN} and the corresponding remark, the convergence rate's dependence on the smoothness parameter is governed by the specific pieces traversed during the iterations. Thus, even when the number of cuts is less than the total number of pieces, {\rapex} can still effectively capture the underlying $\pws$ structure in many cases—such as $(k, L) = (200, 1000)$ and $(k, L) = (50, 100)$—and therefore still exhibits strong convergence behavior. In these instances, having fewer cuts than pieces does not hamper the overall convergence behavior. However, for $(k, L) = (200, 5.0)$ and $(k, L) = (200, 10.0)$, we observe that significantly more iterations are required. We suspect this is because the gap between $\mu$ and $L$ is small, resulting in many more active local pieces, and consequently, $m = 50$ cuts are insufficient to capture the local $\pws$ structure.

\begin{table}[htbp]
  \centering
  \caption{Comparison of {\rapex} on {\maxquad} in different initial QG modulus $\hat{\mu}_1$}
    \begin{tabular}{ccrrrrrr}
    \toprule
    \multicolumn{1}{l}{$k$} & \multicolumn{1}{l}{$L$} & \multicolumn{1}{l}{$\hat{\mu}_1$} & \multicolumn{1}{l}{$\hat{\mu}_S$} & \multicolumn{1}{l}{Oracle Num.} & \multicolumn{1}{l}{Obj($f(\bar{y}^{S})$)} & \multicolumn{1}{l}{Opt($f^*$)} & \multicolumn{1}{l}{$f(\bar{y}^{S})-f^*$} \\
    \midrule
    \multirow{16}[8]{*}{50} & \multirow{4}[2]{*}{5} & 1     & 1.00E+00 & 4043  & -6.91E-01 & -6.91E-01 & 2.08E-07 \\
          &       & 10    & 1.00E+01 & 2218  & -6.91E-01 & -6.91E-01 & -4.26E-08 \\
          &       & 100   & 6.25E+00 & 2494  & -6.91E-01 & -6.91E-01 & 1.36E-07 \\
          &       & 1000  & 3.91E+00 & 2542  & -6.91E-01 & -6.91E-01 & 6.06E-08 \\
\cmidrule{2-8}          & \multirow{4}[2]{*}{10} & 1     & 1.00E+00 & 2785  & -1.74E-01 & -1.74E-01 & 2.09E-07 \\
          &       & 10    & 1.00E+01 & 2003  & -1.74E-01 & -1.74E-01 & 1.19E-07 \\
          &       & 100   & 2.50E+01 & 1565  & -1.74E-01 & -1.74E-01 & 3.77E-07 \\
          &       & 1000  & 1.56E+01 & 1903  & -1.74E-01 & -1.74E-01 & 1.02E-07 \\
\cmidrule{2-8}          & \multirow{4}[2]{*}{100} & 1     & 1.00E+00 & 978   & 1.22E+00 & 1.22E+00 & 2.68E-07 \\
          &       & 10    & 1.00E+01 & 753   & 1.22E+00 & 1.22E+00 & 1.82E-07 \\
          &       & 100   & 1.00E+02 & 562   & 1.22E+00 & 1.22E+00 & 2.45E-07 \\
          &       & 1000  & 6.25E+01 & 671   & 1.22E+00 & 1.22E+00 & 3.83E-07 \\
\cmidrule{2-8}          & \multirow{4}[2]{*}{1000} & 1     & 1.00E+00 & 1152  & 2.34E+00 & 2.34E+00 & 2.25E-07 \\
          &       & 10    & 1.00E+01 & 810   & 2.34E+00 & 2.34E+00 & 1.92E-07 \\
          &       & 100   & 1.00E+02 & 482   & 2.34E+00 & 2.34E+00 & 1.94E-07 \\
          &       & 1000  & 2.50E+02 & 350   & 2.34E+00 & 2.34E+00 & 3.26E-07 \\
    \midrule
    \multirow{16}[8]{*}{200} & \multirow{4}[2]{*}{5} & 1     & 1.00E+00 & 195088 & 7.41E-01 & 7.41E-01 & 1.10E-05 \\
          &       & 10    & 1.00E+01 & 312249 & 7.41E-01 & 7.41E-01 & 8.04E-07 \\
          &       & 100   & 1.00E+02 & 136814 & 7.41E-01 & 7.41E-01 & 1.08E-06 \\
          &       & 1000  & 1.00E+03 & 128509 & 7.41E-01 & 7.41E-01 & 2.02E-06 \\
\cmidrule{2-8}          & \multirow{4}[2]{*}{10} & 1     & 1.00E+00 & 408943 & 1.06E+00 & 1.06E+00 & 3.55E-07 \\
          &       & 10    & 1.00E+01 & 33833 & 1.06E+00 & 1.06E+00 & 5.57E-07 \\
          &       & 100   & 2.50E+01 & 32431 & 1.06E+00 & 1.06E+00 & 4.49E-07 \\
          &       & 1000  & 1.56E+01 & 15204 & 1.06E+00 & 1.06E+00 & 5.01E-07 \\
\cmidrule{2-8}          & \multirow{4}[2]{*}{100} & 1     & 1.00E+00 & 1192  & 2.22E+00 & 2.22E+00 & 3.26E-07 \\
          &       & 10    & 1.00E+01 & 984   & 2.22E+00 & 2.22E+00 & 2.78E-07 \\
          &       & 100   & 1.00E+02 & 684   & 2.22E+00 & 2.22E+00 & 1.53E-07 \\
          &       & 1000  & 6.25E+01 & 761   & 2.22E+00 & 2.22E+00 & 4.02E-07 \\
\cmidrule{2-8}          & \multirow{4}[2]{*}{1000} & 1     & 1.00E+00 & 1080  & 2.96E+00 & 2.96E+00 & 1.56E-07 \\
          &       & 10    & 1.00E+01 & 679   & 2.96E+00 & 2.96E+00 & 1.65E-07 \\
          &       & 100   & 1.00E+02 & 329   & 2.96E+00 & 2.96E+00 & 2.22E-07 \\
          &       & 1000  & 1.00E+03 & 190   & 2.96E+00 & 2.96E+00 & 3.33E-07 \\
    \bottomrule
    \end{tabular}%
    \label{tab:com_rapex_max_quad}
\end{table}%

We further plot the convergence of the upper bound $(\bar{f}^s=f(\bar{y}^{s}))$ and lower bound $(\underline{f}^s=f(\bar{y}^s) - \tilde{\Delta}_s)$ in Figure~\ref{fig:rapex_convergence_300_50} and Figure~\ref{fig:rapex_convergence_300_200}, where the $y$-axis denotes the absolute error $\bar{f}^s-f^*$ or $\underline{f}^s-f^*$. 
Four observations follow. 
First, excluding the $\hat{\mu}_1 = 1000$ case in Figure~\ref{fig:rapex_convergence_300_50}, convergence is generally faster with larger initial $\hat{\mu}_1$, since a larger estimate implies a more aggressive lower bound update, which indirectly accelerates level adjustment and upper bound improvement. 
Second, inaccurate QG estimates can lead to incorrect lower bounds exceeding the optimal value, resulting in vertical segments in Figure~\ref{fig:rapex_convergence_300_50} where {\rapex} adjusts the QG and updates the lower bound estimates. 
Third,  linear convergence is evident in both datasets, which matches the theoretical result in Theorem~\ref{thm:mu_unknown} and Theorem~\ref{thm:unknown_QG_opt_Gap}. In Figure~\ref{fig:rapex_convergence_300_50}, the total number of pieces $(50)$ matches the number of cuts used by {\rapex}, while in the Figure~\ref{fig:rapex_convergence_300_200}, the number of cuts $(50)$ is smaller than the total PWS pieces $(200)$. Nevertheless, since {\rapex} relies on local pieces during iterations, it successfully captures the PWS structure and still demonstrates linear convergence.
{Finally, the lower- and upper-bound improvements are nearly symmetric, which is consistent that our certificate implies a constant factor of the optimality gap.}
\begin{figure}
\raggedright{}%
\begin{minipage}[t]{0.45\columnwidth}%
\begin{center}
\includegraphics[width=\textwidth,height=0.8\textwidth]{./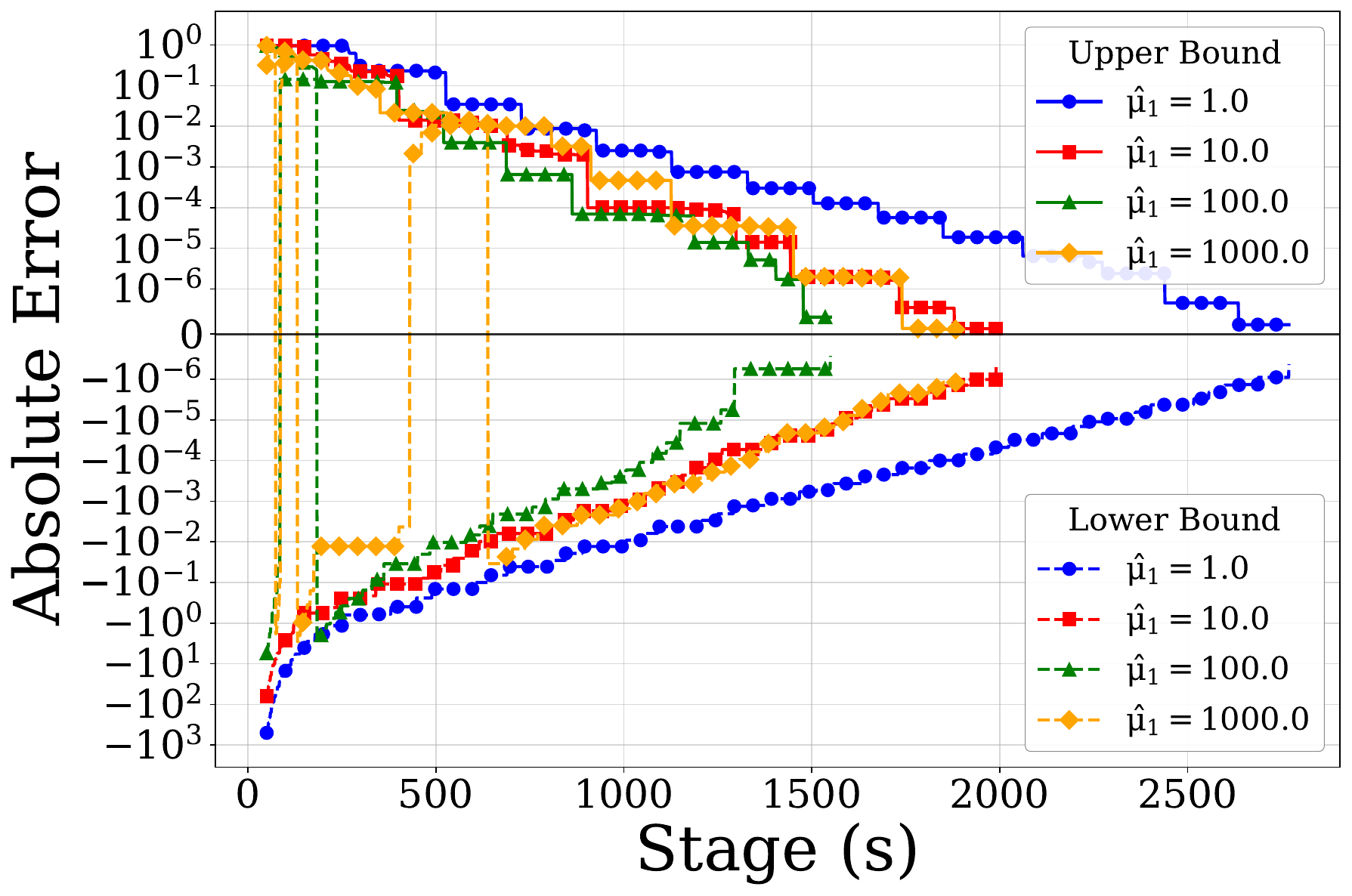}%
\caption{Absolute error of upper bound and lower bound of {\rapex} vs. Stage ($s$ in Algorithm~\ref{alg:mu_unknown}) on {\maxquad} with $k=50,L=10$ in Table~\ref{tab:com_rapex_max_quad}. \label{fig:rapex_convergence_300_50}}
\par\end{center}%
\end{minipage}\hfill%
\begin{minipage}[t]{0.45\columnwidth}%
\begin{center}
\includegraphics[width=\textwidth,height=0.8\textwidth]{./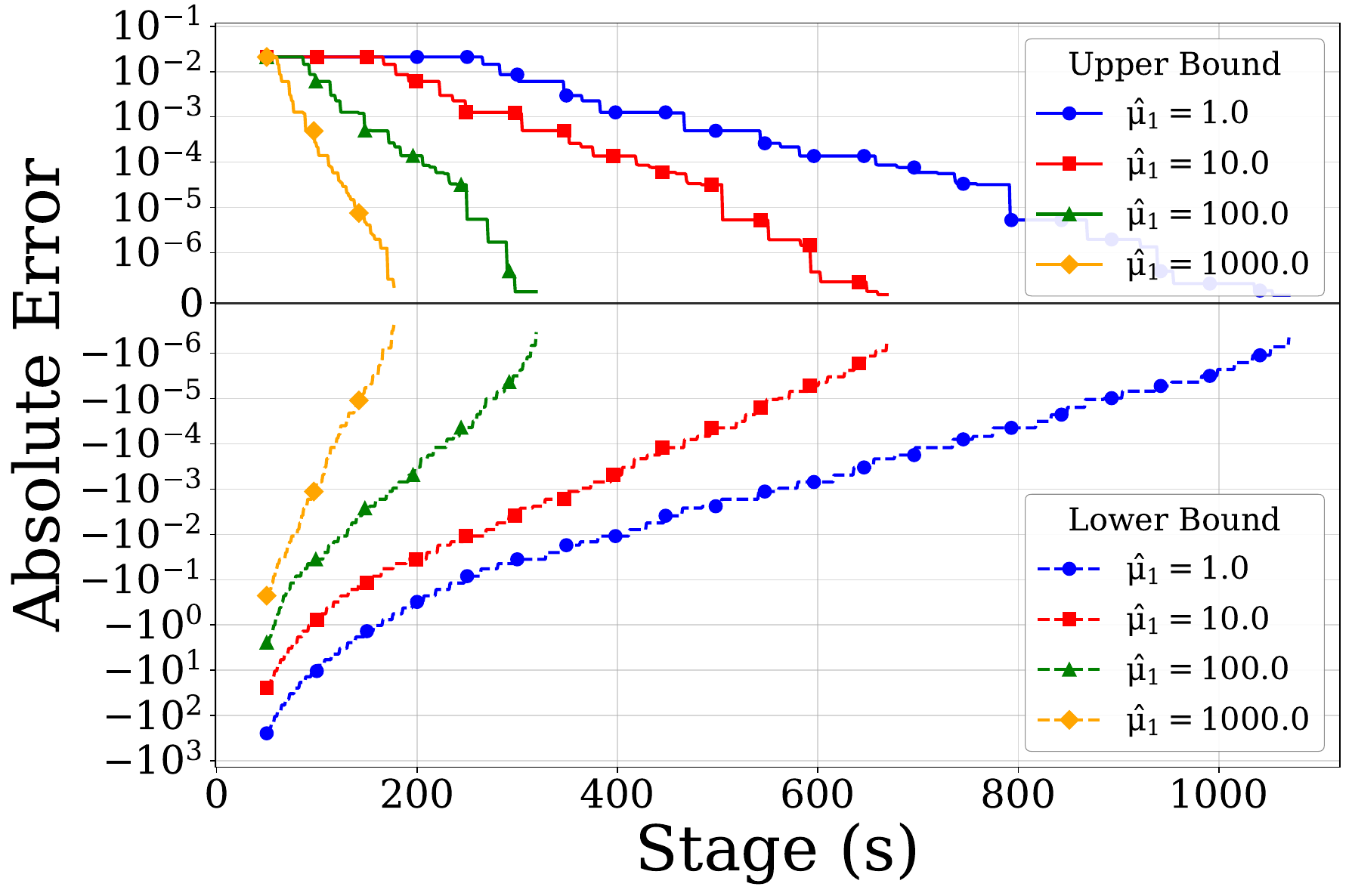}%
\caption{Absolute error of upper bound and lower bound of {\rapex} vs. Stage ($s$ in Algorithm~\ref{alg:mu_unknown}) on {\maxquad} with $k=200,L=1000$ in Table~\ref{tab:com_rapex_max_quad}. \label{fig:rapex_convergence_300_200}}
\par\end{center}%
\end{minipage}
\end{figure}

\paragraph{Comparison with other methods on {\maxquad}.}
We compare the following methods:
\begin{itemize}
\item $\rapex$ (Algorithm~\ref{alg:mu_unknown}): implemented in Julia, the subproblem is solved by ~\href{https://github.com/JuliaLinearAlgebra/NonNegLeastSquares.jl}{NonNegLeastSquares.jl}.
\item restarted APL~\cite{lan2015bundle}: implemented in Julia, the lower bound oracle is solved by Gurobi and the upper bound oracle is solved by ~\href{https://github.com/JuliaLinearAlgebra/NonNegLeastSquares.jl}{NonNegLeastSquares.jl}.
\item Proximal Bundle Method~\cite{BundleMethod.jl.0.4.0}: implemented in Julia, the subproblem is solved by Ipopt. 
\item Subgradient regularized descent (SRD) and SRD\_adapt~\cite{srd}: implemented in Matlab, see website~\footnote{\url{https://github.com/lhyoung99/subgradient-regularization}}.
\end{itemize}

\textbf{Experimental setup.} For all experiments summarized in Table~\ref{tab:compare_method_max_quad_summary}, we fix the number of pieces at $k=50$. For $d=500$, we set the strong convexity parameter $\mu = 1.0$ and vary the smoothness parameter $L$ over the set $\bcbra{5,10,100,1000}$. For large-scale instances, we do not explicitly control $\mu$ or $L$; instead, eigenvalues are sampled randomly and a shared set of eigenvectors is used for all matrices to reduce memory usage.

\textbf{Method-specific settings.} We impose $\hat{\mu}_1=100$ for $\rapex$ and a box constraint $-10^3 \leq x \leq 10^3$ for rAPL, which requires optimization over a bounded set. Since both SRD and SRD\_adapt compute the gradient of every function in the max structure at each iteration, we report their total number of gradient oracle calls as the product of the number of iterations and the number of pieces ($k=50$), enabling a fair comparison. Across all experiments, we impose a maximum time limit of 3 hours and a cap of $20,\!000$ iterations.

\textbf{Performance criteria.} For small-scale problems ($d=500$), we use the Ipopt solution as the reference solution; for large-scale instances ($d=10{,}000$ and $d=20{,}000$), where a high-accuracy external optimum is unavailable, we use the best objective value achieved by any tested method as a reference value. Table~\ref{tab:compare_method_max_quad_summary} reports the number of gradient oracle calls required by each method to reach $f(x)-f^* \leq 10^{-6}$ (small-scale) and $f(x)-f^* \leq 10^{-5}$ (large-scale).

\textbf{Key findings.}
We highlight three main observations from Table~\ref{tab:compare_method_max_quad_summary}. First, for small-scale problems, {\rapex} consistently requires fewer gradient oracle calls than SRD, SRD\_adapt, and rAPL across all tested smoothness regimes. The proximal bundle method achieves the lowest number of oracle calls only when $L/\mu=100/1.0$ and $1000/1.0$, but in all other settings, {\rapex} is competitive or superior. Since we set the number of cuts $\bundleSize=50$, {\rapex} can fully exploit the piecewise smoothness, yielding strong empirical performance in line with theoretical expectations.
Second, for large-scale problems, {\rapex} maintains a clear advantage, requiring dramatically fewer gradient oracle calls than SRD and SRD\_adapt. Competing approaches such as rAPL and the proximal bundle method are unable to handle these larger problem sizes, as indicated by the dashes in the table. 
Third, we observe an interesting phenomenon with the proximal bundle method: it fails to achieve the required accuracy for $L/\mu = 5/1.0$ and $10/1.0$ within the given time and iteration limits, but outperforms other methods when $L/\mu = 100/1.0$ and $1000/1.0$. One possible interpretation is that proximal bundle method may not efficiently exploit the $\pws$ structure on these instances. This potential limitation seems to be more pronounced when $L/\mu$ is small, possibly because the number of active local pieces increases in such regimes. Evidence consistent with this interpretation appears in the increasing number of gradient calls required by {\rapex} as $L/\mu$ decreases (for example, in the cases with $(k, L)\in \bcbra{(200, 5), (200, 10)}$ in Table~\ref{tab:com_rapex_max_quad}). 

\begin{table}[htbp]
  \centering
  \caption{Comparison of gradient oracle calls for different methods on {\maxquad} with $k=50$. Each cell reports the mean (top) and standard deviation (bottom) over 5 runs with different random seeds.}
    \begin{tabular}{ccrccrr}
    \toprule
    \multicolumn{1}{l}{$d$} & \multicolumn{1}{l}{$L/\mu$} & \multicolumn{1}{l}{\rapex} & \multicolumn{1}{l}{rAPL} & \multicolumn{1}{l}{prox-bundle} & \multicolumn{1}{l}{SRD} & \multicolumn{1}{l}{SRD\_adapt} \\
    \midrule
    \multirow{8}[8]{*}{500} & \multirow{2}[2]{*}{5/1.0} & \textbf{2.40E+03} & \multicolumn{1}{r}{3.28E+03} & \multirow{2}[2]{*}{-} & 2.34E+04 & 2.16E+04 \\
          &       & {$\pm \textbf{9.48E+01}$} & {$\pm$ 8.58E+02} &       & {$\pm$ 2.48E+03} & {$\pm$ 1.58E+03} \\
\cmidrule{2-7}          & \multirow{2}[2]{*}{10/1.0} & \textbf{2.22E+03} & \multicolumn{1}{r}{3.34E+03} & \multirow{2}[2]{*}{-} & 1.58E+05 & 1.81E+05 \\
          &       & {$\pm$ \textbf{9.46E+01}} & {$\pm$ 4.07E+02} &       & {$\pm$ 7.44E+03} & {$\pm$ 7.40E+03} \\
\cmidrule{2-7}          & \multirow{2}[2]{*}{100/1.0} & 7.40E+02 & \multicolumn{1}{r}{1.25E+03} & \multicolumn{1}{r}{\textbf{4.69E+02}} & 4.28E+04 & 4.33E+04 \\
          &       & {$\pm$ 1.51E+02} & {$\pm$ 7.07E+01} & {$\pm$ \textbf{5.43E+01}} & {$\pm$ 9.16E+03} & {$\pm$ 8.93E+03} \\
\cmidrule{2-7}          & \multirow{2}[2]{*}{1000/1.0} & 2.00E+02 & \multicolumn{1}{r}{9.05E+02} & \multicolumn{1}{r}{\textbf{1.02E+02}} & 1.17E+06 & 1.33E+06 \\
          &       & {$\pm$ 8.08E+00} & {$\pm$ 4.92E+01} & {$\pm$ \textbf{3.20E+00}} & {$\pm$ 5.24E+05} & {$\pm$ 5.63E+05} \\
    \midrule
    \multirow{2}[2]{*}{10000} & \multirow{2}[2]{*}{-} & \textbf{2.49E+03} & \multirow{2}[2]{*}{-} & \multirow{2}[2]{*}{-} & 2.69E+04 & 2.59E+04 \\
          &       & {$\pm$ \textbf{1.66E+02}} &       &       & {$\pm$ 1.04E+03} & {$\pm$ 6.35E+02} \\
    \midrule
    \multirow{2}[2]{*}{20000} & \multirow{2}[2]{*}{-} & \textbf{2.70E+03} & \multirow{2}[2]{*}{-} & \multirow{2}[2]{*}{-} & 2.77E+04 & 2.48E+04 \\
          &       & {$\pm$ \textbf{1.24E+02}} &       &       & {$\pm$ 5.08E+02} & {$\pm$ 8.37E+02} \\
    \bottomrule
    \end{tabular}%
  \label{tab:compare_method_max_quad_summary}%
\end{table}%

\subsection{\protect\label{subsec:two_sto_lp}Two-stage stochastic linear programming}

In this subsection, we consider the two-stage stochastic linear programming:
\begin{equation}
\min_{l_{x}\leq x\leq u_{x}, x\in \mbb R^{n_1}}\ c_{1}^{\top}x+\mbb E_{\xi\sim \Xi}\bsbra{Q\brbra{x;\xi}}\ \st\ A_{1}x\Join b_{1}\ ,\  A_1 \in \mbb R^{m_1 \times n_1}
\end{equation}
where 
\begin{equation}
  \begin{aligned}
Q\brbra{x;\xi}=\min_{l_{y}\leq y\leq u_{y}, y\in \mbb R^{n_2}}& c\brbra{\xi}^{\top}y\ \\
\st\ & A_{1}\brbra{\xi}x+A_{2}\brbra{\xi}y\Join b_{2}\brbra{\xi},A_1(\xi) \in \mbb R^{m_2\times n_1},A_2(\xi) \in \mbb R^{m_2 \times n_2}
\end{aligned}.
\end{equation}
Here, symbol $\Join\in \bcbra{\geq, =, \leq}$, depending on the specific problem instance.

We use benchmark datasets from~\cite{linderoth2006empirical}, which are available at website~\footnote{\url{https://pages.cs.wisc.edu/~swright/stochastic/sampling/}}.
We consider different sample sizes of the stochastic scenario, the specific problem dimension is described in the following Table~\ref{tab:2stslp_data_description}.

\begin{table}[htbp]
  \centering
  \caption{Problem dimensions and number of scenarios for two-stage stochastic linear programming test instances.}
    \begin{tabular}{lrrrrl}
    \toprule
    Problem & \multicolumn{1}{l}{$n_1$} & \multicolumn{1}{l}{$m_1$} & \multicolumn{1}{l}{$n_2$} & \multicolumn{1}{l}{$m_2$} & Total. Scen ($|\Xi|$) \\
    \midrule
    storm & 121   & 185   & 1259  & 528   & \{8,27,125,1000\} \\
    term20 & 63    & 3     & 764   & 124   & \{50,300\} \\
    \bottomrule
    \end{tabular}%
  \label{tab:2stslp_data_description}%
\end{table}%

For the two-stage stochastic linear programming experiments, we compare the performance of $\rapex$ and rAPL with the number of cutting planes set to $\bundleSize=50$.  We set the initial $\hat{\mu}_1=100$ for $\rapex$ and enforce box constraints in rAPL, setting the lower and upper bounds to $-10^3$ and $10^3$, respectively. All subproblems for both algorithms are solved using Gurobi~\cite{gurobi}. 
Table~\ref{tab:compare_two_slp} summarizes the number of gradient oracle calls required by each method to reach an optimality gap of $f(x) - f^* \leq \bcbra{10^{-3},10^{-4}}$. The reference value $f^*$ is computed by solving the corresponding deterministic linear program—incorporating all scenarios as constraints—using Gurobi.
Two key observations can be drawn from the results. First, $\rapex$ consistently outperforms rAPL across all datasets and tolerance levels, with the sole exception of the term20 instance with 300 scenarios at the $10^{-3}$ tolerance, where rAPL performs slightly better. Second, as the required tolerance becomes more stringent, the advantage of $\rapex$ becomes even more pronounced: rAPL often fails to find a satisfactory solution within the 3-hour time limit and 20,000-iteration cap, especially for a $10^{-4}$ tolerance. In contrast, the number of gradient oracle calls required by $\rapex$ to reach $10^{-4}$ accuracy remains nearly the same as that needed for $10^{-3}$ accuracy. For example, in the term20 instance with 50 scenarios, the optimality gap decreases directly below $10^{-4}$, resulting in an identical iteration count for both tolerance levels.


\begin{table}[htbp]
  \centering
  \caption{Comparison of gradient oracle calls for two methods on two-stage stochastic linear programming}
    \begin{tabular}{l|r|rr|rr}
    \toprule
    \multicolumn{1}{c|}{\multirow{2}[2]{*}{dataset}} & \multicolumn{1}{c|}{\multirow{2}[2]{*}{$f^*$}} & \multicolumn{2}{c|}{$10^{-3}$} & \multicolumn{2}{c}{$10^{-4}$} \\
          &       & \multicolumn{1}{l}{$\rapex$} & \multicolumn{1}{l|}{rAPL} & \multicolumn{1}{l}{$\rapex$} & \multicolumn{1}{l}{rAPL} \\
    \midrule
    storm8 & 1.500384E+07 & \textbf{282} & 446   & \textbf{332} & - \\
    storm27 & 1.500192E+07 & \textbf{510} & 620   & \textbf{527} & - \\
    storm125 & 1.500151E+07 & \textbf{398} & 704   & \textbf{452} & -  \\
    storm1000 & 1.522862E+07 & \textbf{384} & 555   & \textbf{2448} & - \\
    term20(50) & 2.559739E+05 & \textbf{1254} & 1258  & \textbf{1254} & 1696 \\
    term20(300) & 2.549250E+05 & 1341  & \textbf{1308}  & \textbf{1578} & 1747 \\
    \bottomrule
    \end{tabular}%
  \label{tab:compare_two_slp}%
\end{table}%


\renewcommand \thepart{}
\renewcommand \partname{}

\bibliographystyle{abbrvnat}
\bibliography{./ref.bib}


\newpage
\appendix

\addcontentsline{toc}{section}{Appendix}
\part{Appendix} 

\section{Solving the subproblem}

The bundle-level subproblem can be written as
\begin{equation}
\min_{x\in\mbb R^{n}}\frac{1}{2}\norm{x-\bar{y}}^{2}\ ,\ \st Ax\leq b\ ,\label{eq:bundle_subproblem_appendix}
\end{equation}
where each row of $A$ together with the corresponding element of
$b$ defines a cut constraint. The quality of our certificate depends
critically on the accuracy of this subproblem. Many existing solvers
do not meet our accuracy requirement, and the problem often becomes
harder as the procedure progresses, frequently due to strong linear
dependence among the rows of $A$. It is therefore important to provide
an effective solution method.

An important observation is that the dual of~\eqref{eq:bundle_subproblem_appendix}
is
\begin{equation}
\min_{\lambda\geq0}\ \ \frac{1}{2}\lambda^{\top}(AA^{\top})\lambda+\lambda^{\top}\brbra{b-A\bar{y}}\ ,
\end{equation}
which is a nonnegative quadratic program. Furthermore, if there exists
$\tilde{b}$ such that $A\tilde{b}=b$, then the problem reduces to
\begin{equation}
\min_{\lambda\geq0}\ \ \frac{1}{2}\norm{A^{\top}\lambda-(\bar{y}-\tilde{b})}^{2}\ ,
\end{equation}
which is a classical nonnegative least squares problem. There is extensive
literature on this topic, for example~\cite{lawson1995solving,bro1997fast,kim2011fast}.
In addition, an open-source Julia package implements related algorithms,
see~\href{https://github.com/JuliaLinearAlgebra/NonNegLeastSquares.jl}{NonNegLeastSquares.jl}.
In our setting, when $A$ has full row rank, which is typical in our small-bundle setting after removing linearly dependent cuts, a vector $\tilde{b}$ satisfying $A\tilde{b}=b$ exists.
On the other hand, when such a vector does not exist, the nonnegative
quadratic program can still be solved by a modified algorithm for nonnegative
least squares with high accuracy. After solving the dual to obtain
$\lambda^{*}$, we recover $x^{*}=\bar{y}-A^{\top}\lambda^*$ and evaluate
the constraint violation $\norm{\bsbra{Ax^{*}-b}_{+}}$. If this quantity
is below a prescribed tolerance, we accept $x^{*}$; otherwise, we
declare the subproblem infeasible.

\end{document}